\newtheorem{theorem}{Theorem}[section]
\newtheorem{definition}[theorem]{Definition}
\newtheorem{lemma}[theorem]{Lemma}
\newtheorem{proposition}[theorem]{Proposition}
\newtheorem{corollary}[theorem]{Corollary}
\newtheorem{remark}[theorem]{Remark}
\newtheorem{example}[theorem]{Example}
\newcommand{\oo}{{\mathbb{O}}}
\newcommand{\hh}{{\mathbb{H}}}
\newcommand{\cc}{{\mathbb{C}}}
\newcommand{\rr}{{\mathbb{R}}}
\newcommand{\zz}{{\mathbb{Z}}}
\newcommand{\nn}{{\mathbb{N}}}
\newcommand{\s}{{\mathbb{S}}}
\newcommand\vs[1]{{#1}_s^\circ}
\newcommand{\punto}{\bullet}
\newcommand{\sto}{\mathrm{u}}
\newcommand{\Sto}{\mathrm{U}}
\newcommand\re{\operatorname{Re}}
\newcommand\im{\operatorname{Im}}
\newcommand{\OO}{\Omega}
\newcommand{\ord}{\mathrm{ord}}
\newcommand{\mr}{\mathrm}
\newcommand{\mscr}{\mathscr}
\newcommand{\mc}{\mathcal}
\newcommand{\hslashslash}{%
  \raisebox{.9ex}{%
    \scalebox{.7}{%
      \rotatebox[origin=c]{18}{$-$}%
    }%
  }%
}
\newcommand{\fslash}{%
  {%
   \vphantom{f}%
   \ooalign{\kern.05em\smash{\hslashslash}\hidewidth\cr$f$\cr}%
   \kern.05em
  }%
}
\title{\bf Division algebras of slice functions}
\author{Riccardo Ghiloni\\
 Alessandro Perotti\\
\small Dipartimento di Matematica, Universit\`a di Trento\\ 
\small Via Sommarive 14, I-38123 Povo Trento, Italy\\
\small riccardo.ghiloni@unitn.it, alessandro.perotti@unitn.it\\
\and
Caterina Stoppato
\\ 
\small Dipartimento di Matematica e Informatica ``U. Dini'', Universit\`a di Firenze \\
\small Viale Morgagni 67/A, I-50134 Firenze, Italy\\
\small stoppato@math.unifi.it}
\date{  }
\begin{document}

\maketitle

%%%%%%%%%%%%%%%%%%%%%%%

\begin{abstract}
This work studies slice functions over finite-dimensional division algebras. Their zero sets are studied in detail along with their multiplicative inverses, for which some unexpected phenomena are discovered. The results are applied to prove some useful properties of the subclass of slice regular functions, previously known only over quaternions. Firstly, they are applied to derive from the maximum modulus principle a version of the minimum modulus principle, which is in turn applied to prove the open mapping theorem. Secondly, they are applied to prove, in the context of the classification of singularities, the counterpart of the Casorati-Weierstrass theorem.
\end{abstract}

%%%%%%%%%%%%%%%%%%%%%%%

\section{Introduction}\label{sec:introduction}

This work addresses the study of function theory over finite-dimensional division algebras with a unified vision, thanks to the theory of slice functions introduced in~\cite{perotti}.

As explained in~\cite{survey}, complex holomorphy admits a natural generalization to such algebras: the notion of \emph{slice regular function} introduced in~\cite{cras,advances} for the algebra of quaternions and in~\cite{rocky} for the algebra of octonions. The class of slice regular functions includes polynomials and convergent power series of the form
\[f(x)=\sum_{n\in\nn}x^na_n\,,\]
and it has many useful properties. Quaternionic slice regular functions have been extensively studied: see~\cite{librospringer} for a survey of the first phase of their study. Over the octonions, power series have been investigated in~\cite{ghiloni}, while slice regular functions have been considered in the recent work~\cite{wang}. A key tool for these studies was a quaternionic result called representation formula,~\cite[theorem 2.27]{cauchy}, along with the related octonionic result~\cite[lemma 1]{ghiloni}.

The work~\cite{perotti} introduced an innovative approach. On the one hand, it provided a definition of slice regularity valid for functions with values in any alternative $^*$-algebra $A$. On the other hand, it widened the class under investigation by relaxing the regularity assumptions on the functions. Indeed, it defined the class of \emph{slice functions} to comprise exactly those $A$-valued functions (not necessarily differentiable nor continuous) for which the analog of the representation formula is valid. This class properly includes the class of $A$-valued slice regular functions and it provides new tools to study it. For instance, the algebraic properties of slice functions, studied in~\cite{gpsalgebra}, have been applied in~\cite{gpssingularities} to the construction of Laurent series and the classification of singularities of slice regular functions.

In the present work, we turn back to the case of finite-dimensional division algebras. This case affords a richer algebraic structure, which allows us to deepen the study of slice functions. In doing so, we encounter both expected and unexpected phenomena. In the second part of the work, we derive some results valid for slice regular functions over finite-dimensional division algebras, which had not been proven with the original approach to slice regularity.

In Section~\ref{sec:preliminaries}, we recall some facts about finite-dimensional division algebras and some properties of slice functions and slice regular functions.

The grounds for our work thus set, we proceed in Section~\ref{sec:zeros} to a detailed description of the zero sets of slice functions over finite-dimensional division algebras. Their peculiar properties are direct extensions of those of quaternionic slice regular functions,~\cite{advances,zeros,advancesrevised,zerosopen,altavillawithoutreal}, and of octonionic power series,~\cite{rocky,ghiloni}.

Section~\ref{sec:reciprocal} is devoted to reciprocals (multiplicative inverses) of slice functions over finite-dimensional division algebras. We present a representation formula for the reciprocal, which is brand new both over quaternions and over octonions. We generalize to all division algebras a formula of~\cite{poli,zerosopen,gpsalgebra}, which linked the values of a quaternionic slice function to those of its reciprocal. This generalization is highly nontrivial and some unexpected topological phenomena appear in the octonionic case.

In the last part of the work, we focus on slice regular functions over finite-dimensional division algebras. In Section~\ref{sec:open}, we prove the maximum modulus principle and then apply the results about reciprocals to derive a version of the minimum modulus principle. The latter principle is, in turn, applied to prove the open mapping theorem. These results subsume the separate results of~\cite{advances,open,zerosopen,altavillawithoutreal,wang} and they strengthen them both over quaternions and over octonions.

Section~\ref{sec:singularities} studies the possible singularities of slice regular functions. We make use of the general theory of~\cite{gpssingularities} to classify them as removable singularities, poles or essential singularities, but we add a finer characterization of the essential singularities that is typical of division algebras. We then have two results proven in~\cite{poli,zerosopen} for quaternions, but new for octonions. The first one is the counterpart of the Casorati-Weierstrass theorem. The second one concerns the analogs of meromorphic functions, called \emph{semiregular functions}. We prove that semiregular functions form an (infinite-dimensional) division algebra.

%%%%%%%%%%%%%%%%%%%%%%%

\section{Preliminaries}\label{sec:preliminaries}

Let $\cc,\hh,\oo$ denote the $^*$-algebras of complex numbers, quaternions and octonions, respectively. As explained in~\cite{ebbinghaus,libroward,baez}, they can be built from the real field $\rr$ by means of the so-called Cayley-Dickson construction. For the octonions, it takes the form:
\begin{center}
$\oo=\hh+\ell\hh$, $(\alpha+\ell \beta)(\gamma+\ell \delta)=\alpha\gamma-\delta\beta^c+\ell(\alpha^c\delta+\gamma\beta)$, $(\alpha+\ell \beta)^c=\alpha^c-\ell \beta\quad\forall\,\alpha,\beta,\gamma,\delta\in\hh$.
\end{center}

On the one hand, this construction endows the three real vector spaces with a bilinear multiplicative operation, which makes each of them an \emph{algebra}. By construction, each of them is \emph{unitary}, that is, it has a multiplicative neutral element $1$; and $\rr$ is identified with the subalgebra generated by $1$. All algebras and subalgebras we consider are assumed to be unitary. It is well-known that $\oo$ is not commutative nor associative but it is \emph{alternative}: the associator $(x,y,z)=(xy)z-x(yz)$ of three elements vanishes whenever two of them coincide. The \emph{nucleus} $\mathfrak{N}(\oo):=\{r \in \oo\, |\, (r,x,y)=0\ \forall\, x,y \in \oo\}$ and the \emph{centre} $\{r \in \mathfrak{N}(\oo)\, |\, rx=xr\ \forall\, x\in \oo\}$ of the algebra $\oo$ both coincide with $\rr$.

On the other hand, the Cayley-Dickson construction endows each of $\cc,\hh,\oo$ with a \emph{$^*$-involution}, i.e., a (real) linear transformation $x\mapsto x^c$ with the following properties: $(x^c)^c=x$ and $(xy)^c=y^cx^c$ for every $x,y$; $x^c=x$ for every $x \in \rr$. Thus, $\cc,\hh$ and $\oo$ are \emph{$^*$-algebras}. We point out that $(r+v)^c=r-v$ for all $r \in \rr$ and all $v$ in the Euclidean orthogonal complement of $\rr$. The \emph{trace} and \emph{norm} functions, defined by the formulas
\begin{equation}\label{traceandnorm}
t(x):=x+x^c \quad \text{and} \quad n(x):=xx^c,
\end{equation}
are real-valued. In particular, $\oo$ is \emph{compatible}, i.e., its trace function $t$ has values in the nucleus of the algebra. This definition has been given in~\cite[\S1]{gpsalgebra}, along with the following property.
\begin{itemize}
\item {[$^*$-Artin's theorem]} In a compatible $^*$-algebra, the $^*$-subalgebra generated by any two elements is associative.
\end{itemize}
Moreover, in $\cc,\hh$ and $\oo$, $\frac{1}{2}t(xy^c)$ coincides with the standard scalar product $\langle x,y\rangle$ and $n(x)$ coincides with the squared Euclidean norm $\Vert x\Vert^2$. In particular, these algebras are \emph{nonsingular}, i.e., $n(x)=xx^c=0$ implies $x=0$. 
The trace function $t$ vanishes on every commutator $[x,y]:=xy-yx$ and on any associator $(x,y,z)$ (see~\cite[lemma 5.6]{gpsalgebra}). It holds $n(xy) = n(x) n(y)$, or equivalently $\Vert xy\Vert = \Vert x\Vert\,\Vert y\Vert$.

Every nonzero element $x$ of $\cc$, $\hh$ or $\oo$ has a multiplicative inverse, namely $x^{-1} = n(x)^{-1} x^c = x^c\, n(x)^{-1}$. For all elements $x,y$: if $x\neq0$ then $(x^{-1},x,y)=0$; if $x,y\neq 0$ then $(xy)^{-1} = y^{-1}x^{-1}$.
As a consequence, each of the algebras $\cc,\hh,\oo$ is a division algebra and has no zero divisors. 
A famous result due to Zorn states that $\rr,\cc,\hh$ and $\oo$ are the only (finite-dimensional) alternative division algebras.

From this point on, \emph{let $A$ be any of the algebras $\cc,\hh,\oo$}. Let us consider the sphere of \emph{imaginary units}
\begin{equation} \label{eq:s_A}
\s=\s_A:=\{x \in A\, | \, t(x)=0, n(x)=1\} = \{w \in A \, |\, w^2=-1\}\,,
\end{equation}
which has, respectively, dimension $0,2$ or $6$.
The $^*$-subalgebra generated by any $J \in \s$, i.e., $\cc_J=\langle 1,J \rangle$, is $^*$-isomorphic to the complex field $\cc$ (endowed with the standard multiplication and conjugation) through the $^*$-isomorphism 
\[\phi_J\ :\ \cc\to\cc_J,\quad \alpha+i\beta\mapsto\alpha+\beta J\,.\]
The union
\begin{equation} \label{eq:slice}
\text{$\bigcup_{J \in \s}\cc_J$}
\end{equation}
coincides with the entire algebra $A$. If, moreover, $A=\hh,\oo$, then $\cc_I \cap \cc_J=\rr$ for every $I,J \in \s$ with $I \neq \pm J$. As a consequence, every element $x$ of $A \setminus \rr$ can be written as follows: $x=\alpha+\beta J$, where $\alpha \in \rr$ is uniquely determined by $x$, while $\beta \in \rr$ and $J \in \s$ are uniquely determined by $x$, but only up to sign. If $x \in \rr$, then $\alpha=x$, $\beta=0$ and $J$ can be chosen arbitrarily in $\s$. Therefore, it makes sense to define the \emph{real part} $\re(x)$ and the \emph{imaginary part} $\im(x)$ by setting $\re(x):=t(x)/2=(x+x^c)/2 = \alpha$ and $\im(x):=x-\re(x)=(x-x^c)/2=\beta J$. It also makes sense to call the Euclidean norm $\Vert x\Vert=\sqrt{n(x)}=\sqrt{a^2+\beta^2}$ the \emph{modulus} of $x$ and to denote it as $|x|$. The algebra $\oo$ has the following useful property.
\begin{itemize}
\item {[Splitting property]} For each imaginary unit $J \in \s$, there exist $J_1,J_2,J_3 \in \oo$ such that $\{1,J,J_1,JJ_1,J_2,JJ_2,J_3,JJ_3\}$ is a real vector basis of $\oo$, called a \emph{splitting basis} of $\oo$ associated to $J$. Moreover, $J_1,J_2,J_3$ can be chosen to be imaginary units and to make the basis orthonormal.
\end{itemize}
An analogous property holds for $\hh$.

We consider on $A$ the natural Euclidean topology and differential structure as a finite-dimensional real vector space. The relative topology on each $\cc_J$ with $J \in \s$ clearly agrees with the topology determined by the natural identification between $\cc_J$ and $\cc$. Given a subset $E$ of $\cc$, its \emph{circularization} $\OO_E$ is defined as the following subset of $A$:
\[
\OO_E:=\left\{x \in A \, \big| \, \exists \alpha,\beta \in \rr, \exists J \in \s \mathrm{\ s.t.\ } x=\alpha+\beta J, \alpha+\beta i \in E\right \}\,.
\]
A subset of $A$ is termed \emph{circular} if it equals $\OO_E$ for some $E\subseteq\cc$. For instance, given $x=\alpha+\beta J \in A$ we have that
\[
\s_x:=\alpha+\beta \, \s=\{\alpha+\beta I \in A \, | \, I \in \s\}
\]
is circular, as it is the circularization of the singleton $\{\alpha+i\beta\}\subseteq \cc$. We observe that $\s_x=\{x\}$ if $x \in \rr$. On the other hand, for $x \in A \setminus \rr$, the set $\s_x$ is obtained by real translation and dilation from the sphere $\s$. Let $D$ be a non-empty subset of $\cc$ that is invariant under the complex conjugation $z=\alpha+i\beta \mapsto \overline{z}=\alpha-i\beta$: then for each $J \in \s$ the map $\phi_J$ naturally embeds $D$ into a ``slice'' $\phi_J(D)$ of $\OO_D=\bigcup_{J\in\s}\phi_J(D)$.
If, moreover, $D$ is a connected open subset of $\cc$ and it intersects the real line $\rr$, then $\OO_D$ is called a \emph{slice domain}. If, instead, an open $D$ does not intersect $\rr$ and it has two connected components switched by complex conjugation, then $\OO_D$ is called a \emph{product domain}.

For a given $D\subseteq\cc$ (invariant under the complex conjugation) and for $\OO=\OO_D$, we will work with the class $\mc{S}(\OO)$ of \emph{slice functions} $f:\OO\to A$, as defined in~\cite{perotti}.
With the operations of \emph{slice multiplication} and \emph{slice conjugation} defined in the same article, the algebraic structure of slice functions can be described as follows (see~\cite[\S 2]{gpsalgebra}).

\begin{proposition}\label{prop:algebra}
The slice functions $\OO \to A$ form an alternative $^*$-algebra over $\rr$ with pointwise addition $(f,g) \mapsto f + g$, slice multiplication $(f,g) \mapsto f \cdot g$ and slice conjugation $f \mapsto f^c$ on $\mc{S}(\OO)$. The $^*$-algebra $\mc{S}(\OO)$ is compatible and its centre includes the $^*$-subalgebra $\mc{S}_\rr(\OO)$ of slice preserving functions.
\end{proposition}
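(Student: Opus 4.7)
The plan is to lift the claimed identities from the pointwise level to the stem-function level, where everything becomes an instance of the algebraic structure of the complexified algebra $A_\cc:=A\otimes_\rr\cc$.

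First I would recall that each slice function $f\in\mc{S}(\OO)$ is induced by a unique stem function $F\colon D\to A_\cc$ that intertwines complex conjugation with the involution $\overline{a\otimes z}=a\otimes\bar z$ on $A_\cc$, and that this correspondence is an $\rr$-linear bijection onto the space of such intertwining maps. By the definitions in~\cite{perotti}, the slice sum, the slice product and the slice conjugation of slice functions are induced, respectively, by the pointwise sum, the pointwise product in $A_\cc$, and the pointwise involution $(a\otimes z)^\star:=a^c\otimes\bar z$ of the corresponding stem functions. Consequently, every identity that holds pointwise in $A_\cc$ (associativity of addition, distributivity, alternativity, $^*$-involution axioms, compatibility, centrality of a given subset) automatically transfers to $\mc{S}(\OO)$. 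This reduces the proposition to the purely algebraic statement that $A_\cc$, equipped with the multiplication $(a\otimes z)(b\otimes w):=(ab)\otimes(zw)$ and the involution $\star$, is a compatible alternative $^*$-algebra whose centre contains $\rr\otimes\cc$.

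Next I would verify those facts on $A_\cc$. Alternativity is checked by expanding the associator: $\rr$-bilinearity in $\cc$ yields $(a\otimes z,b\otimes w,c\otimes u)_{A_\cc}=(a,b,c)_A\otimes(zwu)$, so the associator vanishes whenever two entries coincide, because the associator on $A$ does by the alternativity of $\cc,\hh,\oo$. The axioms $(X^\star)^\star=X$ and $(XY)^\star=Y^\star X^\star$ reduce, by bilinearity, to the corresponding axioms on $A$ and on $\cc$. For compatibility I would compute the trace $T(X):=X+X^\star$ of a general $X=a\otimes 1+b\otimes\UI$: one gets $T(X)=t(a)\otimes 1+t(b)\otimes\UI\in\rr\otimes\cc$. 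Since $\rr$ lies in the nucleus (indeed the centre) of $A$, the elements of $\rr\otimes\cc$ associate and commute with everything in $A_\cc$; this simultaneously proves compatibility of $A_\cc$ and centrality of $\rr\otimes\cc$.

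Finally I would translate the centrality statement back to the function level. A slice function is slice preserving precisely when its stem function takes values in $\rr\otimes\cc\subseteq A_\cc$, so $\mc{S}_\rr(\OO)$ corresponds to stem functions landing in the central subalgebra just identified; hence $\mc{S}_\rr(\OO)$ is itself a $^*$-subalgebra of $\mc{S}(\OO)$ lying in the centre.

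The only delicate step is the reduction to $A_\cc$, since tensoring with $\cc$ could in principle destroy alternativity; the point that makes it work is that $\cc$ is a \emph{scalar} factor, so associators on simple tensors split off the $\cc$-part cleanly, as noted above. Everything else is essentially bookkeeping inherited from~\cite[\S2]{gpsalgebra}.
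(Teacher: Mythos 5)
Your overall strategy---transferring all the identities to the complexified algebra $A_\cc=A\otimes_\rr\cc$ via the stem-function correspondence---is precisely the route of the reference the paper cites for this statement (\cite[\S 2]{gpsalgebra}); the paper itself offers no proof beyond that citation. There is, however, a concrete error in your identification of the $^*$-involution on $A_\cc$, and it breaks the compatibility argument. The involution that induces slice conjugation is the \emph{$\cc$-linear} extension $(a\otimes z)^c=a^c\otimes z$, i.e.\ $(a\otimes 1+b\otimes\UI)^c=a^c\otimes 1+b^c\otimes\UI$, not your $(a\otimes z)^\star=a^c\otimes\bar z$. You can read this off formula~\eqref{decomposedconjugate}: if $f$ is induced by the stem function $F=F_1\otimes 1+F_2\otimes\UI$, then $f^c$ is induced by $F_1^c\otimes 1+F_2^c\otimes\UI$ (no sign change on the $\UI$-component), whereas your $\star$ induces the map $x\mapsto \vs f(x)^c-\im(x)\,f'_s(x)^c$, which is not $f^c$. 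Do not conflate the $^*$-involution with the conjugation $\overline{a\otimes z}=a\otimes\bar z$ used in the reality condition $F(\bar z)=\overline{F(z)}$ for stem functions; they are different maps and only the second one involves $z\mapsto\bar z$.

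This is not merely a labelling issue: with your $\star$, the trace of $X=a\otimes 1+b\otimes\UI$ is $X+X^\star=t(a)\otimes 1+(b-b^c)\otimes\UI$, whose $\UI$-component is $2\im(b)$ and in general does \emph{not} lie in $\rr\otimes\cc$, which for $A=\oo$ is the nucleus of $A_\cc$; so compatibility would fail as you have set it up. The formula you actually wrote, $T(X)=t(a)\otimes 1+t(b)\otimes\UI$, is correct only for the $\cc$-linear involution; once $\star$ is replaced by that involution, the trace computation, the $^*$-algebra axioms, and hence the whole reduction go through, and the centrality of $\rr\otimes\cc$ and the description of $\mc{S}_\rr(\OO)$ are as you say. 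Two smaller points to tighten: verify that $F^c$ is again a stem function (it is, because $a\mapsto a^c$ commutes with $a\otimes z\mapsto a\otimes\bar z$); and for alternativity of $A_\cc$ on general elements (sums of simple tensors) you need the linearized identities $(X,Y,Z)+(Y,X,Z)=0=(X,Y,Z)+(X,Z,Y)$, which hold because the associator of an alternative algebra is an alternating trilinear map and therefore extends multilinearly under scalar extension---vanishing of the associator on coinciding \emph{simple} tensors alone does not cover, say, $(X,X,Y)$ with $X$ a sum of two simple tensors.
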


We recall that a slice function $f$ is \emph{slice preserving} if it maps every ``slice'' $\OO_D\cap\cc_J$ into $\cc_J$ and if it has the Schwarz reflection property (the latter property being a consequence of the former one when $A=\hh,\oo$, but not when $A=\cc$).
If $f$ is slice preserving then $(f \cdot g)(x)=(g \cdot f)(x)=f(x)g(x)$ and $f^c(x)=f(x)$ but these equalities do not hold in general. 
The \emph{normal function} of $f$ in $\mc{S}(\OO)$ is defined as
\[
N(f)=f \cdot f^c.
\]
It coincides with $f^2$ if $f$ is slice preserving.
It is useful to define the slice function $\vs f:\OO \to A$, called \emph{spherical value} of $f$, and the slice function $f'_s:\OO \setminus \rr \to A$, called \emph{spherical derivative} of $f$, by setting
\[
\vs f(x):=\frac{1}{2}(f(x)+f(x^c))
\quad \text{and} \quad
f'_s(x):=\frac{1}{2}\im(x)^{-1}(f(x)-f(x^c)).
\] 
The works~\cite{advancesrevised,perotti} showed that these functions are constant on each sphere $\s_x\subseteq\OO \setminus \rr$. For all $x \in \OO \setminus \rr$, it holds
\[
f(x)=\vs  f(x) + (\im \cdot\, f'_s)(x) = \vs  f(x)+ \im(x)  f'_s(x)\,,
\]
where the function $\im$ is a slice preserving element of $\mc{S}(A)$. The function $f$ is slice preserving if, and only if, $\vs f$ and $f'_s$ are real-valued. Moreover, the spherical value and derivative are used in the next expressions of the operations on $\mc{S}(\OO)$, see~\cite{gpsalgebra}. 
For all $x \in \OO \setminus \rr$:

\begin{equation}\label{decomposedconjugate}
f^c(x) = \underbrace{\vs  f(x)^c}_{\vs{(f^c)}(x)}+ \im(x)   \underbrace{f'_s (x)^c}_{(f^c)'_s (x)}\,,
\end{equation}
\begin{equation} \label{decomposednormal}
N(f)(x) =\underbrace{n(\vs  f(x)) + \im(x)^2 n( f'_s(x))}_{\vs{N(f)}(x)} + \im(x)\, \underbrace{t\big(\vs  f(x)\, f'_s(x)^c\big)}_{{N(f)}'_s(x)}\,,
\end{equation}
\begin{equation}\label{decomposedproduct}
f\cdot g = \underbrace{\vs  f\,\vs  g+\im^2 f'_s\, g'_s}_{\vs{(f\cdot g)}}+\im\, \underbrace{(\vs  f\, g'_s+ f'_s\, \vs  g)}_{{(f\cdot g)}'_s}\,,
\end{equation}
while for all $x \in \OO \cap \rr$ we have $f(x)=\vs f(x)$, $f^c(x)=f(x)^c$, $N(f)(x)=n(f(x))$, and $(f \cdot g)(x) = f(x) g(x)$.

Some special subclasses of the algebra $\mc{S}(\OO)$ of slice functions have been singled out in~\cite{perotti}: the nested $^*$-subalgebras $\mc{S}^0(\OO), \mc{S}^1(\OO), \mc{S}^\omega(\OO), \mc{SR}(\OO)$, obtained by imposing continuity, continuous differentiability, real analyticity and holomorphy (in appropriate senses). For all of these $^*$-subalgebras except the first one, $\OO$ is assumed to be open (whence a disjoint union of slice domains and product domains). The elements of $\mc{SR}(\OO)$ are called \emph{slice regular} functions and they have been first introduced in~\cite{cras,advances} for $A=\hh$ and in~\cite{rocky} for $A=\oo$.

The relation between slice regularity and complex holomorphy is made explicit in the following result, called the `splitting lemma' and proven in~\cite{rocky,perotti} for $A=\oo$.
    
\begin{lemma}\label{splitting} 
Let $\{1,J,J_1,JJ_1,J_2,JJ_2,J_3,JJ_3\}$ be a splitting basis for $\oo$ and let $\OO$ be an open circular subset of $\oo$. For $f \in \mc{S}^1(\OO)$, let $f_0,\ldots,f_3 : \OO_J \to \cc_J$ be the $\mscr{C}^1$ functions such that $f_{|_{\OO_J}}=\sum_{n=0}^3 f_n J_{n}$, where $J_0:=1$. Then $f$ is slice regular if, and only if, for each $n \in \{0,1,2,3\}$, $f_n$ is holomorphic from $\OO_J$ to $\cc_J$, both equipped with the complex structure associated to left multiplication by $J$.
\end{lemma}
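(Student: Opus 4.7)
The plan is to reduce slice regularity on $\OO$ to a single Cauchy--Riemann equation on the slice $\OO_J$, and then use the splitting basis, together with alternativity, to decouple that equation into four independent Cauchy--Riemann equations on the components $f_0,\ldots,f_3$.

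First, I would recall that for $f\in\mc{S}^1(\OO)$, slice regularity (holomorphy of the associated stem function $F:D\to\oo\otimes_\rr\cc$) is equivalent, via the correspondence $f\leftrightarrow F$ restricted to any chosen slice, to the Cauchy--Riemann equation
\[
\frac{\partial f}{\partial\beta}(\alpha+\beta J)\;=\;J\,\frac{\partial f}{\partial\alpha}(\alpha+\beta J)
\]
on $\OO_J$, where $z=\alpha+\beta J$. This reduction is the definitional translation: a slice function is determined by its restriction to any single slice through the representation formula, and the stem function is holomorphic iff its $(1,J)$-encoded restriction satisfies the displayed equation.

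Next, differentiating $f_{|\OO_J}=\sum_{n=0}^3 f_n J_n$ term by term in the real coordinates $(\alpha,\beta)$ gives
\[
\frac{\partial f}{\partial\alpha}=\sum_{n=0}^3\frac{\partial f_n}{\partial\alpha}J_n,\qquad \frac{\partial f}{\partial\beta}=\sum_{n=0}^3\frac{\partial f_n}{\partial\beta}J_n.
\]
The key algebraic step is to distribute left multiplication by $J$ through each summand. For any $c=\alpha_0+\beta_0 J\in\cc_J$, $\rr$-trilinearity and alternativity of the associator yield
\[
(J,c,J_n)=\alpha_0(J,1,J_n)+\beta_0(J,J,J_n)=0,
\]
so $J(cJ_n)=(Jc)J_n$. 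Applying this with $c=\tfrac{\partial f_n}{\partial\alpha}(z)\in\cc_J$ gives $J\tfrac{\partial f}{\partial\alpha}=\sum_n\bigl(J\tfrac{\partial f_n}{\partial\alpha}\bigr)J_n$.

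Subtracting, slice regularity on $\OO_J$ is equivalent to
\[
\sum_{n=0}^3\Bigl(\frac{\partial f_n}{\partial\beta}-J\frac{\partial f_n}{\partial\alpha}\Bigr)J_n=0,
\]
with each coefficient in $\cc_J$. Since the splitting basis produces the real direct-sum decomposition $\oo=\bigoplus_{n=0}^3 \cc_J\,J_n$, this vanishing is equivalent to $\tfrac{\partial f_n}{\partial\beta}=J\tfrac{\partial f_n}{\partial\alpha}$ for each $n\in\{0,1,2,3\}$, which is exactly the condition that $f_n:\OO_J\to\cc_J$ be holomorphic for the complex structures induced by left multiplication by $J$. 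The main obstacle is the non-associativity handled above: one has to justify distributing $J$ through the formal sum $\sum_n(\cdot)J_n$ before extracting components, and the vanishing of the relevant associators via alternativity is precisely what makes this legitimate. Everything else is coefficient comparison in a direct-sum decomposition.
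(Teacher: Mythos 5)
Your argument is correct and is essentially the standard proof of the splitting lemma: reduce slice regularity to the Cauchy--Riemann equation $\partial_\beta f=J\,\partial_\alpha f$ on the single slice $\OO_J$ (legitimate because a slice function in $\mc{S}^1(\OO)$ is determined by, and its stem function's holomorphy is detected on, one slice), then use the vanishing associators $(J,c,J_n)=0$ for $c\in\cc_J$ and the real direct-sum decomposition $\oo=\bigoplus_{n=0}^3\cc_J J_n$ to decouple it into four scalar Cauchy--Riemann equations. The paper itself does not reprove this lemma but cites \cite{rocky,perotti}, where the proof proceeds along exactly these lines, so there is nothing to fault beyond noting that the first reduction step is itself a (standard, citable) proposition rather than a pure definition.
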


An analogous result holds for $A=\hh$, see~\cite{advances,advancesrevised}. Polynomials and power series are examples of slice regular functions, see~\cite[theorem 2.1]{advances} and~\cite[theorem 2.1]{rocky}.

\begin{proposition}
Every polynomial of the form $\sum_{m=0}^n x^ma_m = a_0+x a_1 + \ldots x^n a_n$ with coefficients $a_0, \ldots, a_n \in A$ is a slice regular function on $A$. Every power series of the form $\sum_{n \in \nn} x^n a_n$ converges in a ball $B(0, R) = \{x \in A\ |\ \Vert x \Vert<R\}$. If $R>0$, then the sum of the series is a slice regular function on $B(0,R)$.
\end{proposition}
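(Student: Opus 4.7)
The plan is to handle monomials through the splitting lemma, bootstrap to polynomials via the algebra structure of Proposition~\ref{prop:algebra}, and then obtain the power series case from Cauchy--Hadamard combined with a Weierstrass-type theorem applied slicewise.

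For a monomial $f(x) = x^m a_m$ in $\oo$, I would fix a splitting basis $\{1,J,J_1,JJ_1,J_2,JJ_2,J_3,JJ_3\}$ and decompose $a_m = \sum_{n=0}^3 a_{m,n} J_n$ with $a_{m,n} \in \cc_J$. On the slice $\cc_J$, $^*$-Artin ensures that $z^m \in \cc_J$, hence $f(z) = \sum_{n=0}^3 (z^m a_{m,n}) J_n$, where each coefficient $z \mapsto z^m a_{m,n}$ is an ordinary complex polynomial and therefore holomorphic for the complex structure induced by left multiplication by $J$. Lemma~\ref{splitting} (together with its quaternionic analog, the complex case being trivial) then delivers slice regularity of $x^m a_m$ on $A$. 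Since $\sr(A)$ is a subalgebra of $\mc{S}(A)$, the polynomial statement follows by closure under sums.

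For the power series, the multiplicativity $\Vert xy\Vert = \Vert x\Vert\,\Vert y\Vert$ on $A$, recalled in Section~\ref{sec:preliminaries}, yields $\Vert x^n a_n\Vert = \Vert x\Vert^n\,\Vert a_n\Vert$; comparison with the real series $\sum \Vert x\Vert^n \Vert a_n\Vert$ then gives, via Cauchy--Hadamard, absolute convergence on $B(0,R)$, divergence outside, and uniform convergence on every closed sub-ball. To see that the sum $f$ is slice regular, I would argue slicewise: on $B(0,R) \cap \cc_J$ the partial sums $S_N$ converge uniformly on compact sets, so each splitting-basis component of $f$ is a uniform limit of $\cc_J$-valued holomorphic functions, hence is itself holomorphic by Weierstrass's theorem. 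Provided $f$ is known to be a slice function, Lemma~\ref{splitting} then gives $f \in \sr(B(0,R))$.

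The main subtlety I expect is verifying that the pointwise limit $f$ is genuinely a slice function, i.e., that the representation formula survives the limit. I would handle this by decomposing each partial sum on $B(0,R) \setminus \rr$ as $S_N(x) = \vs{S_N}(x) + \im(x)\,(S_N)'_s(x)$, noting that both $\vs{S_N}$ and $(S_N)'_s$ are constant on every sphere $\s_x \subseteq B(0,R) \setminus \rr$ and pass to constants in the limit; this forces $f$ itself to admit such a decomposition on each sphere, which is exactly the slice property.
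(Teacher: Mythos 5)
You should first note that the paper does not actually prove this proposition: it is quoted from \cite{advances} and \cite{rocky} (``see [theorem 2.1]'' of each), so your argument can only be measured against the standard proofs in those references. Your overall strategy --- splitting-basis decomposition on a slice for monomials, Cauchy--Hadamard via the multiplicativity $\Vert xy\Vert=\Vert x\Vert\,\Vert y\Vert$ for the radius of convergence, and a slicewise Weierstrass argument for the sum --- is indeed the standard route and is sound in outline.

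There is, however, one genuine gap, precisely at the point you correctly identify as delicate for the series but then skip for the monomials. Lemma~\ref{splitting} is an equivalence \emph{within} $\mc{S}^1(\OO)$: its hypothesis is $f\in\mc{S}^1(\OO)$, and holomorphy of the components of $f_{|_{\OO_J}}$ for a single $J$ implies nothing about $f$ away from that slice unless the slice property (and $\mscr{C}^1$ regularity of the inducing stem function) is already known. Any function agreeing with $x^m a_m$ on $\cc_J$ and arbitrary elsewhere would pass your test. So before invoking the lemma you must show that $x^m a_m$ is a slice function of class $\mc{S}^1$; this is easy but cannot be omitted: expanding $x^m=(\alpha+\beta J)^m$ binomially and using $J^2=-1$ gives $x^m=P(\alpha,\beta)+Q(\alpha,\beta)J$ with $P,Q$ real polynomials independent of $J$, so $x^m$ is a slice preserving function induced by the (holomorphic, hence real analytic) stem function $z\mapsto z^m$, and $x^m a_m$ is its slice product with the constant $a_m$, hence lies in $\mc{S}^\omega(A)$ by Proposition~\ref{prop:algebra}. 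A second, smaller imprecision: the place where the $^*$-Artin theorem is actually needed is the reassociation $z^m(a_{m,n}J_n)=(z^m a_{m,n})J_n$ --- all three factors lie in the associative $^*$-subalgebra generated by $J$ and $J_n$ --- not the trivial fact that $z^m\in\cc_J$. For the series your sphere-by-sphere limit argument does establish the slice property, since $\vs{S_N}(x)=\frac12(S_N(x)+S_N(x^c))$ and $(S_N)'_s(x)=\frac12\im(x)^{-1}(S_N(x)-S_N(x^c))$ converge pointwise to the corresponding quantities for $f$; you should then add that the resulting stem function, being recoverable from the restriction $f_{|_{\cc_J}}$ whose splitting components are holomorphic, is real analytic, so that $f\in\mc{S}^1(B(0,R))$ and Lemma~\ref{splitting} legitimately applies. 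With these repairs the proof is complete.
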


Actually, $\mc{SR}(B(0,R))$ coincides with the $^*$-algebra of power series converging in $B(0, R)$ with the operations
\[\left(\sum_{n \in \nn} x^n a_n\right)\cdot\left(\sum_{n \in \nn} x^n b_n\right) = \sum_{n \in \nn} x^n \sum_{k=0}^na_kb_{n-k}\text{\quad and\quad}
\left(\sum_{n \in \nn} x^n a_n\right)^c= \sum_{n \in \nn} x^n a_n^c\,.\]
This is a consequence of~\cite[theorem 2.7]{advances} and~\cite[theorem 2.12]{rocky}. With the same operations, the polynomials over $A$ form a $^*$-subalgebra of $\mc{SR}(A)$.

\begin{example}\label{ex:Delta}
If we fix $y \in A$, the binomial $f(x) := x-y$ is slice regular on $A$. The normal function $N(f)(x) = (x-y) \cdot (x-y^c) = x^2-x(y+y^c)+yy^c$ coincides with the slice preserving quadratic polynomial
$\Delta_y(x):=x^2-xt(y)+n(y)$.
The zero set of $\Delta_y$ is equal to $\s_y$. Consequently, if $y' \in A$, then $\Delta_{y'}=\Delta_y$ if and only if $\s_{y'}=\s_y$.
\end{example}

\subsection*{Convention}
{\bf
Throughout the paper, all statements concerning $\mc{S}(\OO)$ and its subalgebras are valid for circular sets $\OO$ in $\cc,\hh$ or $\oo$. In Section~\ref{sec:singularities}, we set $A=\cc,\hh$ or $\oo$. All proofs are specialized to the octonionic case for the sake of simplicity, but they stay valid when either $\cc$ or $\hh$ is substituted for $\oo$. All examples are over $\oo$, but the displayed functions can be easily restricted to the subalgebras $\hh$ and $\cc$.
}

%%%%%%%%%%%%%%%%%%%%%%%%%%%%%%%%%%%%%%%

\section{Zeros of slice functions}\label{sec:zeros}

In the present section, we describe the zero sets 
\[
V(f):=\{x \in \OO \, | \, f(x)=0\}
\]
of slice functions $f\in \mc{S}(\OO)$. In the special case of octonionic power series, similar results had been obtained in~\cite{rocky,ghiloni}. For quaternionic slice regular functions, see~\cite{advances,zeros,advancesrevised,zerosopen,altavillawithoutreal}.
In the general setting of slice functions we use the theory developed in~\cite{gpsalgebra}, taking into account the specific properties of division algebras and the next result.
Let us recall that a slice function $f$ is termed \emph{tame} if $N(f)$ is slice preserving and $N(f)=N(f^c)$.

\begin{theorem}\label{thm:tame}
Every $f\in \mc{S}(\OO)$ is tame. As a consequence,
\begin{equation}\label{Nfg}
N(f \cdot g) = N(f) N(g) = N(g) N(f) = N(g\cdot f)
\end{equation}
for all $f,g\in \mc{S}(\OO)$.
\end{theorem}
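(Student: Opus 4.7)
My plan is to read off the spherical value and the spherical derivative of $N(f)$ directly from the decomposition formula~\eqref{decomposednormal} and verify by inspection that both are real-valued; this already implies that $N(f)$ is slice preserving. The identity $N(f)=N(f^c)$ will then follow from a term-by-term comparison with the analogous decomposition of $N(f^c)$, obtained by feeding \eqref{decomposedconjugate} into \eqref{decomposednormal}. Finally, the multiplicativity \eqref{Nfg} will drop out of Proposition~\ref{prop:algebra} combined with $^*$-Artin's theorem once tameness is in hand.

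Concretely, \eqref{decomposednormal} exhibits
\[
\vs{N(f)}(x)=n(\vs f(x))+\im(x)^2\,n(f'_s(x)),\qquad N(f)'_s(x)=t\bigl(\vs f(x)\,f'_s(x)^c\bigr),
\]
for every $x\in\OO\setminus\rr$, and both quantities are real because $n$ and $t$ are real-valued on $A$ and $\im(x)^2=-n(\im(x))\in\rr$; by the characterization recalled in Section~\ref{sec:preliminaries}, $N(f)$ is then slice preserving. For the equality $N(f)=N(f^c)$, the decomposition \eqref{decomposedconjugate} gives $\vs{(f^c)}=\vs f{}^c$ and $(f^c)'_s=f'_s{}^c$, so plugging into \eqref{decomposednormal} and using the conjugation invariance $n(y^c)=n(y)$ yields $\vs{N(f^c)}=\vs{N(f)}$. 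The identity $N(f)'_s=N(f^c)'_s$ reduces to $t(\vs f\,f'_s{}^c)=t(\vs f{}^c\,f'_s)$, which follows by combining the cyclic property $t(xy)=t(yx)$ (equivalent to $t$ vanishing on commutators, as stated in Section~\ref{sec:preliminaries}) with $t(z^c)=t(z)$. A direct check on $\OO\cap\rr$, where both sides equal $n(f(x))$, completes the identification.

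For~\eqref{Nfg}, Proposition~\ref{prop:algebra} says that $\mc{S}(\OO)$ is a compatible $^*$-algebra and places slice preserving functions in its centre; the first part of the theorem shows that $N(f)$ and $N(g)$ are slice preserving, hence central. By $^*$-Artin's theorem the $^*$-subalgebra generated by $f$ and $g$ (which contains $f^c$ and $g^c$) is associative, so
\[
N(f\cdot g)=(f\cdot g)\cdot(g^c\cdot f^c)=f\cdot(g\cdot g^c)\cdot f^c=f\cdot N(g)\cdot f^c=N(g)\cdot N(f),
\]
where centrality of $N(g)$ is used to pull it past $f^c$ in the last step. The analogous rewriting handles $N(g\cdot f)$, and centrality yields $N(f)N(g)=N(g)N(f)$. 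I do not foresee a major obstacle; the one subtle point, and the place I would pay most attention, is to ensure that every reassociation occurs within a subalgebra whose associativity is guaranteed by $^*$-Artin's theorem, or is justified by the centrality of the slice preserving factor being moved.
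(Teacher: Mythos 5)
Your proof is correct and follows essentially the same route as the paper: tameness is read off from \eqref{decomposednormal} and \eqref{decomposedconjugate} together with the conjugation invariance of $n$ and $t$. The only difference is that for \eqref{Nfg} the paper simply cites Proposition 2.4 of the reference on the algebra of slice functions, whereas you spell out the Artin-plus-centrality computation; since every reassociation you perform takes place inside the associative $^*$-subalgebra generated by $f$ and $g$ or moves the central (slice preserving) factor $N(g)$, that explicit argument is sound.
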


 \begin{proof}
 Let $f\in \mc{S}(\OO)$. The function $N(f)$ is slice preserving because the quantities
 \[n(\vs  f(x)) + \im(x)^2 n( f'_s(x)),\quad t\big(\vs  f(x)\, f'_s(x)^c\big)\]
 appearing in Formula~\eqref{decomposednormal} are real numbers. Moreover, $N(f)=N(f^c)$ because the aforementioned quantities are unchanged when $f^c$ is substituted for $f$. This is a consequence of Formula~\eqref{decomposedconjugate} and of the equalities
 \[n(a)= \Vert a\Vert^2 = \Vert a^c\Vert^2 = n(a^c),\quad t(ab^c)=2\langle a,b\rangle=2\langle a^c,b^c\rangle = t(a^cb)\]
 valid for all $a,b\in\oo$. Thus, $f$ is tame. The second assertion then follows from~\cite[proposition 2.4]{gpsalgebra}.
 \end{proof}

We are now ready to describe the zeros of slice functions $f,g$ on a sphere $\s_x$ and their relation to the zeros of $f^c,N(f),f\cdot g$. 
Indeed, the next result can be derived from~\cite[corollary 4.7 (2)]{gpsalgebra} and from~\cite[proposition 5.9]{gpsalgebra}, using theorem~\ref{thm:tame}.

\begin{theorem}\label{zerosonsphere}
If $f \in \mc{S}(\OO)$, then for every $x \in \OO$ the sets $\s_x \cap V(f)$ and $\s_x \cap V(f^c)$ are both empty, both singletons, or both equal to $\s_x$. Moreover,
 \[
V(N(f))=\bigcup_{x \in V(f)}\s_x=\bigcup_{x \in V(f^c)}\s_x
 \]
Finally, for all $g \in \mc{S}(\OO)$,
\[\bigcup_{x\in V(f\cdot g)}\s_x =\bigcup_{x\in V(f)\cup V(g)}\s_x.\]
\end{theorem}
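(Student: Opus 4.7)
The plan is to reduce each assertion to the representation formula $f(\alpha+\beta I)=\vs f(x_0)+\beta I\,f'_s(x_0)$, valid on the sphere $\s_{x_0}$ of a non-real point $x_0=\alpha+\beta J_0$. I write $a:=\vs f(x_0)$ and $b:=f'_s(x_0)$; these are independent of the choice of $I\in\s$. On $\OO\cap\rr$ there is nothing to do, because $f(x)=\vs f(x)$ and $N(f)(x)=n(f(x))$ there, so nonsingularity of $A$ gives $f(x)=0\iff N(f)(x)=0$, while $\s_x=\{x\}$ is automatically a singleton.

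For the first claim, I analyse the equation $a+\beta Ib=0$ in the unknown $I\in\s$. If $a=b=0$ then $f$ vanishes identically on $\s_{x_0}$. If exactly one of $a,b$ is zero the equation has no solution, since $A$ has no zero divisors and $\beta\ne 0$. If $a,b\ne 0$, then by alternativity (Artin's theorem applied to the subalgebra generated by $I$ and $b$) the equation is equivalent to $I=-ab^{-1}/\beta$, a unique element of $A$ that either lies in $\s$ or does not; accordingly $\s_{x_0}\cap V(f)$ is empty or a singleton. The same analysis applies to $f^c$ using $f^c(\alpha+\beta I)=a^c+\beta I b^c$ from~\eqref{decomposedconjugate}; the three vanishing patterns of $(a^c,b^c)$ match those of $(a,b)$, and when both are nonzero the criterion $-ab^{-1}/\beta\in\s$ translates into the two conditions $t(ab^c)=0$ and $n(a)=\beta^2 n(b)$, which are symmetric under $^*$-conjugation because $n(a)=n(a^c)$ and $t(a^cb)=t(ab^c)$ (the trace vanishes on commutators). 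Hence $f$ and $f^c$ fall into the same case on every sphere.

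For the second claim, formula~\eqref{decomposednormal} rewrites
\[N(f)(\alpha+\beta I)=\bigl(n(a)-\beta^2 n(b)\bigr)+\beta I\,t(ab^c),\]
so $N(f)(x_0)=0$ is equivalent to the two real conditions $n(a)=\beta^2 n(b)$ and $t(ab^c)=0$; these depend only on $\s_{x_0}$, confirming that $V(N(f))$ is a union of spheres. The inclusion $V(f)\subseteq V(N(f))$ follows by substituting $a=-\beta Ib$ and computing $ab^c=-\beta I\,n(b)$, whose trace vanishes because $t(I)=0$. Conversely, if $N(f)(x_0)=0$ at a non-real $x_0$, the cases with exactly one of $a,b$ zero are ruled out by $n(a)=\beta^2 n(b)$, and when $a,b\ne 0$ those same two conditions force $I:=-ab^{-1}/\beta$ to be an imaginary unit, producing a zero of $f$ on $\s_{x_0}$. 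Since $N(f)=N(f^c)$ by Theorem~\ref{thm:tame}, the identity with $V(f^c)$ follows by the same argument.

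The last claim follows from the tameness identity $N(f\cdot g)=N(f)N(g)$ of Theorem~\ref{thm:tame}. Because $N(f)$ and $N(g)$ are slice preserving, and thus central in $\mc{S}(\OO)$, this slice product coincides with the pointwise product, and the absence of zero divisors in $A$ yields $V(N(f\cdot g))=V(N(f))\cup V(N(g))$. Applying the second claim to $f$, $g$ and $f\cdot g$ converts both sides into the desired unions of spheres. The one point requiring genuine care in the octonionic case is the use of alternativity to justify manipulations such as $(Ib)b^{-1}=I(bb^{-1})$; once that is in place, everything else is a direct verification of the scalar conditions $n(a)=\beta^2 n(b)$ and $t(ab^c)=0$.
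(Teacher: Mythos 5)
Your proof is correct, and it is genuinely self-contained where the paper's is not: the paper simply derives the theorem from two external results of the general theory (\cite[corollary 4.7(2)]{gpsalgebra} and \cite[proposition 5.9]{gpsalgebra}) combined with theorem~\ref{thm:tame}, whereas you reprove everything directly from the decomposition $f(\alpha+\beta I)=a+\beta I b$ with $a=\vs f$, $b=f'_s$ constant on the sphere, together with formulas~\eqref{decomposedconjugate} and~\eqref{decomposednormal}. The key observations are all in place and valid: the unique candidate zero $I=-ab^{-1}/\beta$ when $a,b\neq 0$ (justified by alternativity, since $(Ib)b^{-1}=I$ and $(ab^{-1})b=a$); the translation of $I\in\s$ into the two real conditions $n(a)=\beta^2 n(b)$ and $t(ab^c)=0$, which are manifestly invariant under $a\mapsto a^c$, $b\mapsto b^c$ and are exactly the vanishing conditions for $N(f)$ read off from~\eqref{decomposednormal}; the exclusion, on a sphere where $N(f)$ vanishes, of the case where exactly one of $a,b$ is zero; and the reduction of the last claim to $N(f\cdot g)=N(f)N(g)$ plus the absence of zero divisors in $A$. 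What your route buys is transparency: one sees concretely why the three-way alternative (empty/singleton/whole sphere) holds and why $V(N(f))$ is the circularization of $V(f)$, at the cost of redoing computations that \cite{gpsalgebra} performs in the general compatible-algebra setting. What the paper's route buys is brevity and the fact that the cited results also cover algebras that are not division algebras, where tameness is an actual hypothesis rather than automatic. The only points worth polishing are cosmetic: at real points you should note explicitly that $f^c(x)=f(x)^c$, so $V(f)$ and $V(f^c)$ agree there; and in the first claim the subalgebra relevant to $(ab^{-1})b=a$ is the one generated by $a$ and $b$, not by $I$ and $b$ (both steps are covered by $^*$-Artin, but they are different instances of it).
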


\begin{example}\label{ex:binomial}
Fix $y\in\oo$. If $f(x) := x-y$, whence $f^c(x) =x-y^c$ and $N(f)=\Delta_y(x)
$, then 
\[V(f)=\{y\}\,,\quad V(f^c)=\{y^c\}\,,\quad V(N(f))=\s_y\,.\] For all constant functions $g\equiv c$, we have $(f\cdot g)(x)= xc-yc$, whence $V(f\cdot g)$ is $\{y\}$ when $c \neq 0$ and it is $\oo$ when $c=0$.
\end{example}

The general picture is much more manifold than the previous example tells. The next result describes the `camshaft effect' (so called in~\cite{ghiloni}, which studied the special case of octonionic power series).
It can be proven using~\cite[theorem 5.5]{gpsalgebra} and theorem~\ref{zerosonsphere}.

\begin{theorem}\label{Camshaft}
Let $f,g\in\mc{S}(\OO)$. If $x\in\OO$ is real and $x\in V(f)\cup V(g)$, then $x\in V(f\cdot g)$. More generally, for all $x\in\OO$,
\begin{enumerate} 
\item If $\s_x\subseteq V(f)$ or $\s_x\subseteq V(g)$, then $\s_x\subseteq V(f\cdot g)$.
\end{enumerate}
If $x\in\OO\setminus\rr$ then:
\begin{enumerate}
\item[2.]
If $\s_x\cap V(f)$ is a singleton $\{y\}$ and $\s_x\cap V(g)=\emptyset$, then $\s_x\cap V(f\cdot g)=\{w\}$, with
\[ w=\big((y f'_s(x)) \vs g(x)-(y\im(y) f'_s(x)) g'_s(x)\big) \big( f'_s(x)\vs g(x)-(\im(y) f'_s(x)) g'_s(x) \big)^{-1}.\]
\item[3.] 
If $\s_x\cap V(f)=\emptyset$ and $\s_x\cap V(g)$ is a singleton $\{z\}$,  then $\s_x\cap V(f\cdot g)=\{w\}$, with
\[w=\big(\vs f(x)(z g'_s(x))- f'_s(x)(z\im(z) g'_s(x)) \big) \big( \vs  f(x) g'_s(x)- f'_s(x)(\im(z)  g'_s(x)) \big)^{-1}.\]   
\item[4.]
If $\s_x\cap V(f)=\{y\}$ and $\s_x\cap V(g)=\{z\}$ for some $y,z \in \s_x$, then one of the following holds:
\begin{enumerate}
\item  $\s_x\subseteq V(f\cdot g)$; or
\item  $\s_x\cap V(f\cdot g)=\{w\}$, with 
\[w=\big( n(x) f'_s(x) g'_s(x) - (y f'_s(x)) (z g'_s(x))\big) \big( (f\cdot g)'_s(x) \big)^{-1};\]
\end{enumerate}
depending on whether or not $(f\cdot g)'_s(x) =(y^c f'_s(x)) g'_s(x)-f'_s(x) (z g'_s(x))$ vanishes.
\end{enumerate}
\end{theorem}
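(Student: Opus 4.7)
The plan is to combine the decomposition $f \cdot g = \vs{(f\cdot g)} + \im \cdot (f\cdot g)'_s$ from~\eqref{decomposedproduct} with the structural dichotomy of Theorem~\ref{zerosonsphere} and the zero-product result in~\cite[theorem 5.5]{gpsalgebra}. Two trivialities are dispatched first. When $x \in \OO \cap \rr$ we have $(f\cdot g)(x) = f(x) g(x)$, which vanishes as soon as one factor does. For item~1, if $\s_x \subseteq V(f)$ then the representation $f(w) = \vs f(x) + \im(w) f'_s(x)$ for $w \in \s_x$ forces $\vs f(x) = 0 = f'_s(x)$ (an affine function of $\im(w)$ that vanishes identically on the sphere $|\im(x)|\cdot \s$ must have both coefficients zero), so~\eqref{decomposedproduct} gives $\vs{(f\cdot g)}(x) = 0 = (f\cdot g)'_s(x)$; the case $\s_x \subseteq V(g)$ is analogous.

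For items~2--4 with $x \in \OO \setminus \rr$, Theorem~\ref{zerosonsphere} guarantees that $\s_x \cap V(f\cdot g)$ is empty, a singleton, or all of $\s_x$, with the last alternative occurring iff both $\vs{(f\cdot g)}(x)$ and $(f\cdot g)'_s(x)$ vanish. The key substitutions are that $y \in V(f)\cap\s_x$ is equivalent to $\vs f(x) = -\im(y) f'_s(x)$, and analogously for $z \in V(g)\cap\s_x$. Inserting these into~\eqref{decomposedproduct} and exploiting the identities $\im(y)^2 = \im(x)^2 \in \rr$, $\im(y)(\im(y)\, a) = \im(y)^2\, a$ and $y\, \im(y) = \alpha\, \im(y) + \im(y)^2$ where $\alpha = \re(x)$ (all valid by $^*$-Artin, since $y$ and $\im(y)$ generate a commutative associative subalgebra), I would simplify both coefficients. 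In Part~4, combining the two substitutions reduces $(f\cdot g)'_s(x)$ to $(y^c f'_s(x)) g'_s(x) - f'_s(x)(z g'_s(x))$, so its vanishing separates alternatives 4(a) and 4(b). Whenever $(f\cdot g)'_s(x) \neq 0$, solving $\vs{(f\cdot g)}(x) + \im(w)(f\cdot g)'_s(x) = 0$ in the division algebra $A$ yields a unique $\im(w)$; setting $w = \alpha + \im(w)$ and rearranging via the identities above produces the stated closed forms, with the factor $n(x) = y y^c$ in the Part~4(b) numerator surfacing from the identity $\alpha\, \im(y) + \im(y)^2 = y\, \im(y)$ applied to both $y$ and $z$.

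The main obstacle will be the careful non-associative bookkeeping: every rebracketing must be justified by alternativity, $^*$-Artin, or a Moufang identity, since ``moving parentheses'' is not automatic in $\oo$. A secondary verification needed in Parts~2 and~3 is that $\s_x \cap V(f\cdot g) = \s_x$ does \emph{not} occur, so that the intersection really is a singleton; this is checked by observing that the hypotheses $\s_x\cap V(f)=\{y\}$, $\s_x\cap V(g)=\emptyset$ (respectively their Part~3 analog) prevent $\vs f,\,f'_s,\,\vs g,\,g'_s$ from vanishing in the combinations needed to collapse both coefficients simultaneously, after which the existence of the singleton zero on $\s_x$ is extracted from~\cite[theorem 5.5]{gpsalgebra}.
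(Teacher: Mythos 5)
Your proposal is correct and follows essentially the same route as the paper, which proves this theorem precisely by combining \cite[theorem 5.5]{gpsalgebra} with theorem~\ref{zerosonsphere}; your additional detail (substituting $\vs f(x)=-\im(y)f'_s(x)$, $\vs g(x)=-\im(z)g'_s(x)$ into formula~\eqref{decomposedproduct} and solving the affine equation $\vs{(f\cdot g)}(x)+\im(w)(f\cdot g)'_s(x)=0$, which does yield $w=\re(x)-\vs{(f\cdot g)}(x)\,\big((f\cdot g)'_s(x)\big)^{-1}$ and matches each stated closed form) is exactly the computation underlying that citation. You also correctly isolate the two points that genuinely need the cited results rather than bare algebra, namely the existence of a zero of $f\cdot g$ on $\s_x$ and the exclusion of $\s_x\subseteq V(f\cdot g)$ in cases 2 and 3.
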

We point out that in case {\it 4.(b)} the point $w$ can be equivalently computed by means of each of the formulas appearing in {\it 2.} and {\it 3.}
        
The zero sets of slice regular functions can be further characterized by means of~\cite[theorem 4.11]{gpsalgebra} and~\cite[corollary 4.17]{gpsalgebra}, as follows. For $J\in\s$, we will use the notations $\cc_J^+=\{\alpha+\beta J\in\cc_J\,|\, \beta>0\}$ and  $\OO_J^+:=\OO\cap\cc_J^+$.

\begin{theorem}\label{structure_zeros}
Assume that $\OO$ is a slice domain or a product domain and let $f \in \mc{SR}(\OO)$.
\begin{itemize}
\item If $f\not\equiv0$ then the intersection $V(f)\cap \cc_J^+$ is closed and discrete in $\OO_J$ for all $J\in\s$ with at most one exception $J_0$, for which it holds $f_{|_{\OO_{J_0}^+}} \equiv 0$.
\item If, moreover, $N(f)\not\equiv0$ then $V(f)$ is a union of isolated points or isolated spheres $\s_x$.
\end{itemize}
\end{theorem}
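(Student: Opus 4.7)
The plan is to pass to each slice via Lemma~\ref{splitting} (and its analogue for $\hh$; the case $A=\cc$ is immediate) and reduce to the classical identity theorem for one-variable holomorphic functions.

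For the first bullet, fix $J\in\s$ and decompose $f|_{\OO_J}=\sum_{n=0}^{3}f_nJ_n$ with each $f_n\colon\OO_J\to\cc_J$ holomorphic. On any connected component $U$ of $\OO_J$ on which some $f_n\not\equiv 0$, the classical identity principle makes the zero set of that $f_n$ closed and discrete in $U$; the common zero set $V(f)\cap U$ is a subset, hence also closed and discrete in $U$. What remains is to bound the number of $J$ for which $f|_{\OO_J^+}\equiv 0$. For this I would use the decomposition $f(\alpha+\beta K)=\vs f(\alpha+\beta K)+\beta K\,f'_s(\alpha+\beta K)$ on $\OO\setminus\rr$, noting that $\vs f$ and $f'_s$ are constant on each sphere. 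The hypothesis $f|_{\OO_{J_0}^+}\equiv 0$ reads $\vs f+\beta J_0\,f'_s=0$ on the upper part $D\cap\cc^+$ of the representing set; if the same vanishing held for some $J_1\neq J_0$, subtracting the two identities would give $\beta(J_1-J_0)f'_s\equiv 0$ there, and absence of zero divisors in the division algebra $A$ would force $f'_s\equiv 0$ and hence $\vs f\equiv 0$ on $D\cap\cc^+$, so (by the conjugation invariance of $\vs f, f'_s$ and continuity) $f\equiv 0$ on $\OO$, contradicting the hypothesis. When $\OO$ is a slice domain, the real-analyticity of $\vs f, f'_s$ together with the connectedness of $D$ in fact rules out even a single exceptional $J_0$.

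For the second bullet, I would apply the first bullet to the normal function $N(f)$, which by Theorem~\ref{thm:tame} is slice preserving. Since $N(f)\not\equiv 0$, vanishing on any $\OO_J^+$ would force the real-valued $\vs{N(f)}$ and $N(f)'_s$ to vanish on $D\cap\cc^+$, hence by slice preservation and continuity on all of $\OO$, which is impossible. Thus $N(f)|_{\OO_J^+}\not\equiv 0$ for every $J\in\s$, and the first bullet makes $V(N(f))\cap\cc_J^+$ closed and discrete in $\OO_J$; the same splitting-lemma argument makes the real zeros of $N(f)$ isolated. By Theorem~\ref{zerosonsphere}, $V(N(f))=\bigcup_{x\in V(f)}\s_x$ with each nonempty intersection $\s_x\cap V(f)$ being either a singleton or the whole sphere. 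Since every non-real sphere in $V(N(f))$ meets $\cc_J^+$ in exactly one point, the spheres in $V(N(f))$ together with the real zeros of $N(f)$ are in bijection with the closed discrete subset $V(N(f))\cap\OO_J$ of $\OO_J$, so they are mutually isolated in $\OO$. This gives the desired decomposition of $V(f)$ into isolated points and isolated spheres.

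The step I expect to be most delicate is the uniqueness of $J_0$ in the non-associative octonionic setting: the deduction $\beta(J_1-J_0)f'_s(x)\equiv 0\Rightarrow f'_s(x)\equiv 0$ must be justified pointwise by the fact that $A$ is a division algebra without zero divisors (no cancellation that would be illegitimate in an alternative algebra is needed), and the propagation from $D\cap\cc^+$ to the real axis, in the slice-domain case, should be done via real-analyticity of the spherical value and derivative rather than any direct extension of $f$ itself.
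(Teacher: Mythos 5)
Your proof is correct, but it follows a genuinely different route from the paper: the paper does not argue this theorem directly at all, deriving both bullets by citation to the general results of \cite[theorem 4.11]{gpsalgebra} and \cite[corollary 4.17]{gpsalgebra}, which are proved there for slice functions over arbitrary alternative $^*$-algebras. You instead give a self-contained argument specialized to division algebras: the splitting lemma plus the classical identity principle for the slice-wise statement, the decomposition $f=\vs f+\im\cdot f'_s$ plus the absence of zero divisors for the uniqueness of the exceptional half-slice, and an application of the first bullet to the slice preserving function $N(f)$ (via theorem~\ref{thm:tame} and theorem~\ref{zerosonsphere}) for the second bullet. This is more elementary and makes visible exactly where the division-algebra hypothesis enters (the cancellation $\beta(J_1-J_0)f'_s\equiv 0\Rightarrow f'_s\equiv 0$, which you correctly justify pointwise rather than by any illicit associativity), whereas the paper's route buys generality and brevity. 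Two small points you should tighten if writing this up: in the product-domain case the passage from ``closed and discrete in $\OO_J^+$'' to ``closed and discrete in $\OO_J$'' uses that $\OO_J^+$ is clopen in $\OO_J$; and in the second bullet the spheres and real zeros are in bijection with the closed discrete set $\bigl(V(N(f))\cap\cc_J^+\bigr)\cup\bigl(V(N(f))\cap\rr\bigr)$ rather than with all of $V(N(f))\cap\OO_J$ (each non-real sphere meets $\OO_J$ twice), after which the circularity of $V(N(f))$ and the continuity of $x\mapsto\re(x)+|\im(x)|J$ give the isolation of the spheres in $\OO$.
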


An example with $f\not\equiv0$ and $f_{|_{\cc_{J_0}^+}} \equiv 0 \equiv N(f)$ has been provided in~\cite[remark 12]{perotti}:

\begin{example}\label{1fin}
Fix $J_0\in\s$. Let $f(x)=1+\frac{\im(x)}{|\im(x)|}J_0$ for each $x\in \oo\setminus\rr$. Then $f$ is slice regular in $\oo\setminus\rr$ and $V(f)=\cc_{J_0}^+$. Moreover $N(f)\equiv0$.
\end{example}

Understanding when $N(f)\equiv 0$ implies $f\equiv 0$ is the same as characterizing the nonsingularity of $\mc{SR}(\OO)$, which can be done as follows.

\begin{proposition}\label{SRnonsingular}
The $^*$-algebra $\mc{SR}(\OO)$ is nonsingular if, and only, $\OO$ is a union of slice domains.
\end{proposition}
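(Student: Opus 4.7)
The plan is to treat each connected component of $\OO$ separately. A connected circular open subset of $A$ is either a slice domain or a product domain, and since stem functions can be specified independently on each component of the generating set $D\subseteq\cc$, slice regular functions on $\OO$ can be built componentwise. The theorem then reduces to the fact that $\mc{SR}(\OO')$ is nonsingular when $\OO'$ is a slice domain and singular when $\OO'$ is a product domain.

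For the $(\Leftarrow)$ direction, I would assume every component of $\OO$ is a slice domain and fix $f\in\mc{SR}(\OO)$ with $N(f)\equiv 0$. On a slice domain component $\OO'$, pick $x_0\in\OO'\cap\rr$; the formula for $N(f)$ at real points displayed just after~\eqref{decomposedproduct} gives $n(f(x_0))=N(f)(x_0)=0$, so $f(x_0)=0$ by the nonsingularity of $A$ noted after~\eqref{traceandnorm}. Hence $f$ vanishes on the nonempty real interval $\OO'\cap\rr$. Fixing $J\in\s$ together with a splitting basis, Lemma~\ref{splitting} identifies the complex components $f_0,\dots,f_3$ of $f|_{\OO'_J}$ as $\cc_J$-valued holomorphic functions on the connected open set $\OO'_J$; each vanishes on a real interval, so the classical complex identity principle forces $f_0,\dots,f_3\equiv 0$ on $\OO'_J$. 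The representation formula built into the definition of slice functions then propagates $f|_{\OO'_J}\equiv 0$ to $f\equiv 0$ on the whole of $\OO'$.

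For the $(\Rightarrow)$ direction, I would suppose some component $\OO'$ of $\OO$ is a product domain, so $\OO'\subseteq A\setminus\rr$. Restricting the function $g$ of Example~\ref{1fin} to $\OO'$ and extending by zero on the remaining components produces $f\in\mc{SR}(\OO)$ with $N(f)\equiv 0$ (legitimate because, as noted above, stem functions can be assembled independently on each component). This $f$ is not identically zero on $\OO'$, since its zero set there is $\OO'\cap\cc_{J_0}^+$, and every nonreal sphere $\s_x\subseteq\OO'$ meets the half-plane $\cc_{J_0}^+$ in at most one point while $\OO'$ is a union of such spheres. Hence $\mc{SR}(\OO)$ fails to be nonsingular.

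The delicate step is the $(\Leftarrow)$ direction: chaining the algebraic nonsingularity of $A$ at a real point, the splitting lemma, the complex identity principle on $\OO'_J$, and finally the representation formula is essential to pass from a single slice back to the full component. The $(\Rightarrow)$ direction is a direct construction, once one recognizes that Example~\ref{1fin} already furnishes the required counterexample on any product domain.
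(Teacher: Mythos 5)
Your proof is correct, but it takes a different route from the paper, which disposes of this proposition in one line by invoking the general result \cite[proposition 4.14]{gpsalgebra}, valid over arbitrary alternative $^*$-algebras. Your argument is instead self-contained and exploits the division-algebra setting directly: for the ``if'' direction you chain the identity $N(f)(x)=n(f(x))$ at real points, the nonsingularity of $A$, Lemma~\ref{splitting}, the complex identity principle on the connected slice $\OO'_J$, and the representation formula; for the ``only if'' direction you recycle Example~\ref{1fin}, exactly the example the paper itself points to as the witness of singularity on product domains. All the steps check out: $\OO'\cap\rr$ is a nonempty open subset of $\rr$ (so it contains an interval with accumulation points, which is all the identity principle needs), the gluing of stem functions across components of $D$ is legitimate, and the equivalence between ``union of slice domains'' and ``every connected component is a slice domain'' holds because a connected circular open set containing a real point cannot be a product domain. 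What the paper's citation buys is generality and brevity; what your argument buys is a transparent proof readable without consulting \cite{gpsalgebra}, at the cost of relying on features (nonsingularity of $A$, the splitting lemma) specific to $\cc$, $\hh$, $\oo$.
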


The previous proposition follows directly from~\cite[proposition 4.14]{gpsalgebra}. Similarly, the next result derives from~\cite[proposition 5.18]{gpsalgebra},~\cite[corollary 5.19]{gpsalgebra} and theorem~\ref{thm:tame}.

\begin{proposition}
Assume that $\OO$ is a slice domain or a product domain. Take $f\in\mc{SR}(\OO)$ with $N(f)\not\equiv0$. Then, for $g\in\mc{S}^0(\OO)$, $f\cdot g\equiv0$ or $g\cdot f\equiv0$ implies $g\equiv0$. In particular, if $\OO$ is a slice domain, then no element of $\mc{SR}(\OO)$ can be a zero divisor in $\mc{S}^0(\OO)$.
\end{proposition}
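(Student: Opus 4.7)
The plan is to reduce the hypotheses $f\cdot g\equiv0$ and $g\cdot f\equiv0$ to a pointwise identity $N(f)(x)g(x)=0$ on $\OO$, and then to exploit the fact that $N(f)$, being a nonzero slice regular slice preserving function, has a thin zero set.

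First I would perform an algebraic reduction. By Proposition \ref{prop:algebra}, $\mc{S}(\OO)$ is a compatible $^*$-algebra, so $^*$-Artin's theorem yields associativity of the $^*$-subalgebra generated by $f$ and $g$. Theorem \ref{thm:tame} ensures $f$ is tame, which gives $N(f)=N(f^c)$ and so, unfolding both sides, the symmetric identity $f\cdot f^c=f^c\cdot f$. Starting from $f\cdot g\equiv0$, I would left-multiply by $f^c$ and reassociate to obtain $N(f)\cdot g\equiv0$; starting from $g\cdot f\equiv0$, right-multiplying by $f^c$ produces $g\cdot N(f)\equiv0$.

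Next I would interpret these conditions pointwise. Because $N(f)$ is slice preserving (again by Theorem \ref{thm:tame}), the preliminaries give that both $N(f)\cdot g$ and $g\cdot N(f)$ act pointwise as $N(f)(x)g(x)$. Since $\oo$ is alternative and a division algebra, at each $x\in\OO$ with $N(f)(x)\neq 0$ one may multiply by $N(f)(x)^{-1}$ on the left, using $\bigl(N(f)(x)^{-1},N(f)(x),g(x)\bigr)=0$, to deduce $g(x)=0$. Now $N(f)\in\mc{SR}(\OO)$ is not identically zero; slice preservation together with the absence of zero divisors in $\oo$ shows $N(N(f))=N(f)^2\not\equiv0$. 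Hence Theorem \ref{structure_zeros} applies to $N(f)$ and yields that $V(N(f))$ is a union of isolated points and isolated spheres, which has empty interior in $\OO$. Therefore $g$ vanishes on an open dense subset of $\OO$, and continuity of $g\in\mc{S}^0(\OO)$ forces $g\equiv0$.

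The ``in particular'' clause follows at once: when $\OO$ is a slice domain, Proposition \ref{SRnonsingular} makes $\mc{SR}(\OO)$ nonsingular, so every nonzero $f\in\mc{SR}(\OO)$ automatically satisfies $N(f)\not\equiv0$ and is covered by the main statement. I expect the most delicate point to be the initial reassociation of products in the nonassociative algebra $\mc{S}(\OO)$: it requires the compatibility of $\mc{S}(\OO)$ to invoke $^*$-Artin, and it requires Theorem \ref{thm:tame} in the symmetric form $f\cdot f^c=f^c\cdot f$ so that both the left- and right-handed reductions land on the same slice preserving function $N(f)$.
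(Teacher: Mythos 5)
Your proof is correct. Note that the paper does not actually write out an argument for this proposition: it simply derives it from \cite[proposition 5.18]{gpsalgebra}, \cite[corollary 5.19]{gpsalgebra} and theorem~\ref{thm:tame}, so what you have produced is a self-contained proof where the paper relies on external citations. The mechanism you use --- multiply $f\cdot g\equiv 0$ on the left by $f^c$ and reassociate to get $N(f)\cdot g\equiv 0$ (and symmetrically $g\cdot N(f)\equiv 0$, using tameness in the form $f^c\cdot f=N(f^c)=N(f)$), read the product pointwise as $N(f)(x)g(x)$ because $N(f)$ is slice preserving, and then kill $g$ on the dense complement of $V(N(f))$ before invoking continuity --- is precisely what the cited results package up, so the two routes are morally the same; yours has the advantage of making visible exactly where compatibility of $\mc{S}(\OO)$ (proposition~\ref{prop:algebra}), $^*$-Artin, and tameness (theorem~\ref{thm:tame}) enter. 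Each delicate point is handled properly: the reassociation $(f^c\cdot f)\cdot g=f^c\cdot(f\cdot g)$ is legitimate since $f^c$ lies in the $^*$-subalgebra generated by $f$ and $g$, which is associative by $^*$-Artin; and your verification that $N(N(f))=N(f)^2\not\equiv 0$ (pointwise, $N(f)$ being slice preserving and $A$ having no zero divisors) is exactly what is needed to apply the second bullet of theorem~\ref{structure_zeros} to $N(f)$ and conclude that $V(N(f))$, being a union of isolated points and isolated spheres $\s_x$, has empty interior. One could shorten this last step slightly by applying theorem~\ref{structure_zeros} to $f$ itself and circularizing via theorem~\ref{zerosonsphere}, but that is a matter of taste. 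The ``in particular'' clause via proposition~\ref{SRnonsingular} is also handled correctly.
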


%%%%%%%%%%%%%%%%%%%%%%%

\section{Reciprocals of slice functions}\label{sec:reciprocal}

This section treats the multiplicative inverses $f^{-\punto}$ of slice functions. We will make use of the general theory of~\cite{gpsalgebra} but also prove two new formulas for $f^{-\punto}$: a representation formula and a formula that links the values of $f^{-\punto}$ to those of $f$. The representation formula for $f^{-\punto}$ is a completely new result. The second formula was only known for the quaternionic case, see~\cite{poli,zerosopen,gpsalgebra}.

Our first statement follows immediately from~\cite[proposition 2.4]{gpsalgebra}, from formula \eqref{decomposednormal} and from theorem~\ref{thm:tame}.

\begin{proposition}\label{reciprocal}
Let $f \in \mc{S}(\OO)$. If $\OO' := \OO \setminus V(N(f))$ is not empty, then $f$ admits a multiplicative inverse $f^{-\punto}$ in $\mc{S}(\OO')$, namely
\[
f^{-\punto}(x) = (N(f)^{-\punto} \cdot f^c) (x)= (N(f)(x))^{-1} f^c(x).
\]
If $x\in\OO'\cap(\rr\cup V(f'_s))$ then $f^{-\punto}(x)=f(x)^{-1}$. Furthermore, if $\OO'$ is open then $f^{-\punto}$ is slice regular if and only if $f$ is slice regular in $\OO'$.
\end{proposition}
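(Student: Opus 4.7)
First I would set up the algebraic framework: by theorem~\ref{thm:tame} the slice function $f$ is tame, so $N(f)=f\cdot f^c=f^c\cdot f$ is slice preserving, and by formula~\eqref{decomposednormal} it is even real-valued. Proposition~\ref{prop:algebra} places $\mc{S}_\rr(\OO)$ in the centre of $\mc{S}(\OO)$, hence in its nucleus, so every product involving a slice-preserving factor associates and commutes freely with everything else. On the set $\OO'=\OO\setminus V(N(f))$ the function $N(f)$ is pointwise invertible, and its pointwise reciprocal $N(f)^{-\punto}\colon \OO'\to\rr$ lies in the central $^*$-subalgebra $\mc{S}_\rr(\OO')$. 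I would then verify that $g:=N(f)^{-\punto}\cdot f^c$ is a two-sided multiplicative inverse of $f$ in $\mc{S}(\OO')$: using centrality,
\[
f\cdot g = N(f)^{-\punto}\cdot(f\cdot f^c) = N(f)^{-\punto}\cdot N(f)=1,\qquad g\cdot f = N(f)^{-\punto}\cdot(f^c\cdot f)=1,
\]
where $f^c\cdot f=N(f)$ again by tameness. The second displayed equality in the statement, $g(x)=N(f)(x)^{-1}f^c(x)$, is immediate from the fact that slice multiplication by a real-valued slice-preserving function coincides with pointwise scalar multiplication.

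The specialization at $x\in\rr$ is then automatic from $f^c(x)=f(x)^c$ and $N(f)(x)=n(f(x))$ combined with the division-algebra formula $y^{-1}=n(y)^{-1}y^c$. For $x\in(\OO'\setminus\rr)\cap V(f'_s)$ the same conclusion follows after observing, from formulas~\eqref{decomposedconjugate} and~\eqref{decomposednormal}, that every term involving $f'_s(x)$ drops out: $f(x)=\vs f(x)$, $f^c(x)=\vs f(x)^c$ and $N(f)(x)=n(\vs f(x))=n(f(x))$, so again $f^{-\punto}(x)=n(f(x))^{-1}f(x)^c=f(x)^{-1}$.

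For the last assertion, I would use that $\mc{SR}(\OO')$ is a $^*$-subalgebra of $\mc{S}(\OO')$. If $f$ is slice regular on $\OO'$ then so are $f^c$ and $N(f)=f\cdot f^c$, and the formula $f^{-\punto}=N(f)^{-\punto}\cdot f^c$ reduces the problem to showing that the pointwise reciprocal of a nowhere vanishing, slice preserving, slice regular function $h$ on the open set $\OO'$ is again slice regular. This is the main obstacle I foresee, since every earlier step was purely algebraic, whereas here one has to argue slice by slice. The splitting lemma (lemma~\ref{splitting}) settles it: $h$ restricts on each $\OO'_J$ to a holomorphic, $\cc_J$-valued, nowhere vanishing function, whose pointwise reciprocal is holomorphic $\OO'_J\to\cc_J$; since $1/h$ is plainly slice preserving, the lemma returns its slice regularity. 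The converse implication then follows by symmetry, applying the same argument to $f^{-\punto}$ and using uniqueness of multiplicative inverses in $\mc{S}(\OO')$ to identify $(f^{-\punto})^{-\punto}$ with $f$.
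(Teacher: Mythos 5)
Your proposal is essentially the paper's own route: the paper disposes of this proposition by citing \cite[proposition 2.4]{gpsalgebra} together with theorem~\ref{thm:tame} and formula~\eqref{decomposednormal}, and what you have written is a correct unwinding of that citation, including the splitting-lemma argument for the regularity statement. One assertion, however, is false and should be deleted: $N(f)$ is \emph{not} real-valued. Formula~\eqref{decomposednormal} shows that its spherical value and spherical derivative are real-valued, which says exactly that $N(f)$ is slice preserving, i.e.\ $\cc_J$-valued on $\OO\cap\cc_J$; compare example~\ref{ex:Delta}, where $N(x-y)=\Delta_y$ is a nonconstant polynomial. Fortunately nothing you do depends on real-valuedness: slice multiplication by \emph{any} slice preserving function is pointwise multiplication, and $\mc{S}_\rr(\OO')$ (the slice preserving functions, not the real-valued ones) is central by proposition~\ref{prop:algebra}. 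Relatedly, you should justify that the pointwise reciprocal of the nowhere-vanishing slice preserving function $N(f)$ is again a slice preserving function: writing $N(f)=a+\im\, b$ with $a,b$ real-valued and constant on spheres, one checks that $N(f)^{-1}=(a-\im\, b)\,(a^2+|{\im}|^2b^2)^{-1}$ has the same form. With these two repairs the argument is complete; the tameness step $f^c\cdot f=N(f^c)=N(f)$, the evaluation on $\rr\cup V(f'_s)$, and the converse via uniqueness of two-sided inverses are all sound.
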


We point out that here and in the rest of the paper the restriction $f_{|_{\OO'}}$ is denoted again as $f$ and $f^{-\punto}$ stands for $\left(f_{|_{\OO'}}\right)^{-\punto}$. Similarly, the product $f\cdot g$ of two slice functions $f,g$ whose domains of definition intersect in a smaller domain $\tilde \OO\neq \emptyset$ should be read as $f_{|_{\tilde\OO}} \cdot g_{|_{\tilde\OO}}$.

We remark that if $\OO=\OO_D$ is a slice domain and $f \in \mc{SR}(\OO)$ has $N(f)\not \equiv 0$, then $V(N(f))$ is the circularization of a closed and discrete subset of $D$.

\begin{example}\label{ex:DeltaInv}
For fixed $y \in \oo$ and $f(x) := x-y$, we have
\[
f^{-\punto}(x) = \Delta_y(x)^{-1} (x-y^c) = (x^2-xt(y)+n(y))^{-1}(x-y^c)
\]
in $\mc{SR}(\OO')$, where $\OO' = \oo \setminus \s_y$. In the sequel, for each $n \in \zz$ we will denote by $(x-y)^{\punto n}$ the $n^{\rm{th}}$-power of $f$ with respect to the slice product. For the power $(x-y)^{\punto (-n)}$ we might also use the notation $(x-y)^{-\punto n}$.
\end{example}

More in general, we will be using the notation $f(x)\cdot g(x)$ for $(f\cdot g)(x)$ and the notation $f(x)^{\punto n}$ for $f^{\punto n}(x)$ when they are unambiguous. In such a case, $x$ will necessarily stand for a variable.

Now let us see how $f^{-\punto}$ can be represented in terms of $\vs f, f'_s$. To this end, the following definition will be useful.

\begin{definition}
For all $a,b$ with $n(a) \neq n(b)$ or $t(b^ca)\neq0$, we set
\[\Phi(a,b) := \frac{n(a)a^c+b^cab^c}{(n(a)-n(b))^2+(t(b^ca))^2}\,.\]
\end{definition}

We point out that the definition is well-posed because $\oo$ is a compatible $^*$-algebra. Notice that swapping $a$ and $b$ changes the numerator of $\Phi(a,b)$ but not its denominator. Moreover, $\Phi(a^c,b^c)=\Phi(a,b)^c$, $\Phi(a,0)=a^{-1}$ and $\Phi(0,b)=0$. We are now ready for the announced representation.

\begin{theorem}\label{thm:reciprocalrepresentation}
Let $f \in \mc{S}(\OO)$ and suppose that $\OO' := \OO \setminus V(N(f))$ is not empty. For all $x \in \OO' \setminus \rr$ it holds
\begin{eqnarray}\label{decomposedreciprocal}
f^{-\punto}(x) = \underbrace
{\Phi\big(\vs f(x),|\im(x)|\,f'_s(x)\big)
\phantom{\frac{}{|}}
}_{\vs{(f^{-\punto})}(x)}
\underbrace
{-\ \frac{\im(x)}{|\im(x)|}\ \Phi\big(|\im(x)|\,f'_s(x),\vs f(x)\big)}
_{\im(x)(f^{-\punto})'_s (x)}\,,
\end{eqnarray}
For all $x \in \OO' \cap (\rr \cup V(f'_s))$, $f^{-\punto}(x)=\vs f(x)^{-1}=f(x)^{-1}$. Moreover, $V((f^{-\punto})'_s)=V(f'_s)\cap\OO'$.
\end{theorem}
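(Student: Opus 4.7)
The plan is to compute $f^{-\punto}(x) = N(f)(x)^{-1} f^c(x)$ from Proposition~\ref{reciprocal} explicitly and then match the outcome against~\eqref{decomposedreciprocal}. Setting $a := \vs f(x)$, $b := |\im(x)|\, f'_s(x)$, and $J := \im(x)/|\im(x)| \in \s$, formulas~\eqref{decomposedconjugate} and~\eqref{decomposednormal} yield
\[
f^c(x) = a^c + J b^c, \qquad N(f)(x) = (n(a) - n(b)) + J\, t(b^c a),
\]
so $N(f)(x)$ lies in $\cc_J$ and $N(f)(x)^{-1} = D^{-1}\bigl[(n(a)-n(b)) - J\, t(b^c a)\bigr]$, where $D := (n(a)-n(b))^2 + (t(b^c a))^2$. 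Distributing the product $N(f)(x)^{-1} f^c(x)$, noting that $(n(a)-n(b))$ and $t(b^c a)$ are real scalars that move freely, and associating within the subalgebra $\langle J, b^c\rangle$ by $^*$-Artin's theorem to get $J(Jb^c) = -b^c$, I obtain
\[
f^{-\punto}(x) = D^{-1}\bigl[(n(a)-n(b))a^c + t(b^c a) b^c\bigr] + J \cdot D^{-1}\bigl[(n(a)-n(b))b^c - t(b^c a) a^c\bigr].
\]

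The remaining step for~\eqref{decomposedreciprocal} is to recognize the two bracketed expressions as $\Phi(a,b)$ and $-\Phi(b,a)$. This hinges on the pair of alternativity identities
\[
a^c b a^c = t(b^c a) a^c - n(a) b^c, \qquad b^c a b^c = t(b^c a) b^c - n(b) a^c,
\]
which I would establish by expanding $t(b^c a)\, a^c = (b^c a + a^c b)\, a^c$ and $t(b^c a)\, b^c = (b^c a + a^c b)\, b^c$ and invoking the alternative-algebra identity $(xy) y^c = x(yy^c) = n(y) x$ (plus the flexible law to drop inner parentheses). Substituting into the definition of $\Phi$ then matches the two bracketed terms exactly with $\Phi(a,b)$ and $-\Phi(b,a)$. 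The labelling of the summands as $\vs{(f^{-\punto})}(x)$ and $\im(x)(f^{-\punto})'_s(x)$ follows from the uniqueness of the decomposition $g(x) = \vs g(x) + \im(x) g'_s(x)$, since $a$, $b$ and $|\im(x)|$ are constant on $\s_x$ while $J$ encodes the position within the sphere.

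The cases $x \in \rr$ and $x \in V(f'_s) \setminus \rr$ are then easy to dispatch: Proposition~\ref{reciprocal} already gives $f^{-\punto}(x) = f(x)^{-1}$ at such points, and $\vs f(x)^{-1} = f(x)^{-1}$ holds because $\vs f(x) = f(x)$ (either since $x = x^c$, or since $f'_s(x) = 0$ forces $f(x) = \vs f(x)$).

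I expect the equality $V((f^{-\punto})'_s) = V(f'_s) \cap \OO'$ to be the main obstacle. The inclusion $\supseteq$ is immediate, since $b = 0$ yields $\Phi(b, a) = \Phi(0,a)=0$ and hence $(f^{-\punto})'_s(x) = -\Phi(b,a)/|\im(x)| = 0$. For the reverse, assuming $(f^{-\punto})'_s(x) = 0$ gives $\Phi(b, a) = 0$, which via the identity for $a^c b a^c$ reduces to the real-linear relation
\[
(n(b) - n(a))\, b^c + t(b^c a)\, a^c = 0.
\]
The hypothesis $D \neq 0$ rules out the simultaneous vanishing of both coefficients, so $a^c$ and $b^c$ are $\rr$-proportional; assuming $b \neq 0$ and writing $a = \mu b$ with $\mu \in \rr$, one computes $t(b^c a) = 2\mu n(b)$ and $n(a) = \mu^2 n(b)$, and the relation collapses to $n(b)(1 + \mu^2)\, b^c = 0$, contradicting $b \neq 0$. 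The degenerate cases ($a = 0$ alone, or $a$ and $b$ both zero) are either immediate or ruled out by $D \neq 0$. Hence $f'_s(x) = 0$, completing the proof.
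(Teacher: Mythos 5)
Your proof is correct, but it reaches formula~\eqref{decomposedreciprocal} by a genuinely different route than the paper. The paper takes the expression $\Phi(a_1,a_2)-J\Phi(a_2,a_1)$ as a candidate, checks by direct computation (via the $^*$-Artin theorem) that its slice product with $f$ is identically $1$ on each sphere $\alpha+\beta\s\subseteq\OO'$, and then invokes the uniqueness of the inverse guaranteed by Proposition~\ref{reciprocal}; you instead \emph{derive} the formula by expanding $N(f)(x)^{-1}f^c(x)$ in the slice $\cc_J$ and converting the result into the $\Phi$-form through the identities $b^cab^c=t(b^ca)\,b^c-n(b)\,a^c$ and $a^cba^c=t(b^ca)\,a^c-n(a)\,b^c$, which hold in the associative $^*$-subalgebra generated by $a$ and $b$. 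Your approach is more constructive (it explains where the numerator of $\Phi$ comes from), at the price of proving those two auxiliary identities; the paper's verification is shorter but presupposes the answer. The last assertion is also handled differently: the paper deduces $V((f^{-\punto})'_s)\subseteq V(f'_s)$ in one line from the involution $f=(f^{-\punto})^{-\punto}$, whereas you solve $\Phi(b,a)=0$ directly, reducing to $(n(b)-n(a))b^c+t(b^ca)a^c=0$ and excluding the proportionality case via $n(b)(1+\mu^2)b^c=0$; both arguments are valid, the paper's being slicker and yours self-contained. Your dispatch of the real points and of $V(f'_s)$ by citing Proposition~\ref{reciprocal} is legitimate, since that proposition already records $f^{-\punto}(x)=f(x)^{-1}$ there.
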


\begin{proof}
Pick any sphere $\alpha+\beta\s\subseteq\OO'$ (with $\alpha,\beta\in\rr$, $\beta\neq 0$). Let $a_1:=\vs f(\alpha+\beta i)$ and $a_2:=\beta f'_s (\alpha+\beta i)$ so that $f(\alpha+\beta J)=a_1+Ja_2$ for all $J\in\s$. Consider the slice function 
\[g(\alpha+\beta J):=\Phi(a_1,a_2)-J\Phi(a_2,a_1)\]
on the same sphere. Then
\[(g \cdot f)(\alpha+\beta J)=\Phi(a_1,a_2)a_1+\Phi(a_2,a_1)a_2+J(\Phi(a_1,a_2)a_2-\Phi(a_2,a_1)a_1)\,.\]
Thanks to the $^*$-Artin theorem,
\begin{align*}
\Phi(a_1,a_2)a_1+\Phi(a_2,a_1)a_2&= \frac{n(a_1)^2+(a_2^ca_1)^2+n(a_2)^2+(a_1^ca_2)^2}{(n(a_1)-n(a_2))^2+(t(a_2^ca_1))^2}\\
&= \frac{(n(a_1)-n(a_2))^2+(a_2^ca_1+a_1^ca_2)^2}{(n(a_1)-n(a_2))^2+(t(a_2^ca_1))^2}\\
&=1
\end{align*}
and
\begin{align*}
\Phi(a_1,a_2)a_2-\Phi(a_2,a_1)a_1&= \frac{n(a_1)a_1^ca_2+a_2^ca_1n(a_2)-n(a_2)a_2^ca_1-a_1^ca_2n(a_1)}{(n(a_1)-n(a_2))^2+(t(a_2^ca_1))^2}\\
&= 0\,.
\end{align*}
Thus, $g\cdot f_{|_{\alpha+\beta\s}}\equiv1$. Since proposition~\ref{reciprocal} guarantees the existence of $f^{-\punto}\in\mc{S}(\OO')$, we conclude that
\[f^{-\punto}(\alpha+\beta J) = g(\alpha+\beta J) = \Phi(a_1,a_2)-J\Phi(a_2,a_1)\]
for all $J\in\s$, which is our first statement.

The second statement follows, for the case $x \in \OO' \cap \rr$, from the fact that 
\[1=(f\cdot f^{-\punto})(x)=f(x)\,f^{-\punto}(x)=\vs f(x)\,f^{-\punto}(x)\,.\]
For all $x \in (\OO'\setminus\rr) \cap V(f'_s)$, it follows from the fact that $a_2=0$ implies, for all $J\in\s$, the equality $f^{-\punto}(\alpha+\beta J) = \Phi(a_1,0)-J\Phi(0,a_1)= a_1^{-1}$ along with the equality $f(\alpha+\beta J)=a_1=\vs f(\alpha+\beta J)$. 

By the same argument we derive the inclusion $V(f'_s)\cap\OO' \subseteq V((f^{-\punto})'_s)$. Since $f=(f^{-\punto})^{-\punto}$, the inclusion $V((f^{-\punto})'_s) \subseteq V(f'_s)$ also holds, whence the third statement.
\end{proof}

We will now study the relation between the values $f^{-\punto}(x)$ and the values $f(x)^{-1}$.

\begin{theorem}\label{thm:reciprocalformula}
Let $f \in \mc{S}(\OO)$ and suppose that $\OO':=\OO\setminus V(N(f))\neq\emptyset$. For all $x \in \OO' \cap (\rr \cup V(f'_s))$ it holds $f^{-\punto}(x)=f(x)^{-1}=f(y)^{-1}$ for each $y\in\s_x$. If $\OO'':=\OO'\setminus(\rr \cup V(f'_s))\neq\emptyset$, then for all $x\in\OO''$
\begin{equation}\label{eq:reciprocal}
f^{-\punto}(x) = f(T_f(x))^{-1}
\end{equation}
where
\begin{equation}\label{eq:transformation}
T_f(x):=(f^c(x)^{-1}((xf^c(x))f'_s(x)))f'_s(x)^{-1}\,.
\end{equation}
$T_f$ is a bijective self-map of $\OO''$. For all $y\in\OO''$, the restriction $(T_f)_{|_{\s_y}}$ is a conformal transformation of the sphere $\s_y$. If $f_{|_{\OO''}}\in\mc{S}^0(\OO'')$, then $T_f$ is a homeomorphism.

If $\OO'$ is open and $f_{|_{\OO'}}\in\mc{S}^\omega(\OO')\supseteq\mc{SR}(\OO')$ then $f'_s$ extends to $\OO'$ in real analytic fashion. Let us denote the zero set of the extension as $V$ and set $\widehat\OO:=\OO'\setminus V$. Then formula~\eqref{eq:transformation} defines a real analytic diffeomorphism $T_f$ of $\widehat\OO$ onto itself, fulfilling equality~\eqref{eq:reciprocal} for all $x\in\widehat\OO$.

Finally, in all cases described $T_{f^{-\punto}}$ is the inverse map to $T_f$.
\end{theorem}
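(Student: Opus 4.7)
The first assertion, concerning $x \in \OO' \cap (\rr \cup V(f'_s))$, is immediate from Theorem~\ref{thm:reciprocalrepresentation}, which already yields $f^{-\punto}(x) = f(x)^{-1}$; and since $f'_s(x) = 0$ implies $f(y) = \vs f(x) + \im(y) f'_s(x) = \vs f(x) = f(x)$ for every $y \in \s_x$, the value $f(x)^{-1}$ coincides with $f(y)^{-1}$ on the whole sphere.

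The core of the argument is the identity $f^{-\punto}(x) = f(T_f(x))^{-1}$ for $x \in \OO''$. Fix such an $x = \alpha + \beta J$ with $\beta > 0$, and set $a := \vs f(x)$, $b := f'_s(x)$, so that $f(y) = a + \beta K b$ for $y = \alpha + \beta K \in \s_x$. Uniqueness of a $K \in \s$ satisfying $a + \beta K b = f^{-\punto}(x)^{-1}$ follows from the absence of zero divisors in $\oo$, since $b \neq 0$. For existence and the explicit formula, the plan is a direct computation: expand $T_f(x) = (f^c(x)^{-1}((xf^c(x))b))b^{-1}$ by repeated use of the $^*$-Artin theorem, first associating within the pair $\{x, f^c(x)\}$ and then within the pair $\{f^c(x)^{-1}((xf^c(x))b), b\}$, and verify that $T_f(x) = \alpha + \beta K_0$ for a specific $K_0 \in \s$ whose image $a + \beta K_0 b$ matches $f^{-\punto}(x)^{-1} = (\Phi(a,\beta b) - J\Phi(\beta b, a))^{-1}$. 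The key algebraic identities driving the matching are those already proven inside Theorem~\ref{thm:reciprocalrepresentation}, namely $\Phi(a_1,a_2)a_1 + \Phi(a_2,a_1)a_2 = 1$ and $\Phi(a_1,a_2)a_2 = \Phi(a_2,a_1)a_1$ with $a_1 = a$, $a_2 = \beta b$. In the quaternionic reduction this formula collapses to the conjugation $T_f(x) = f^c(x)^{-1} x f^c(x)$, an efficient sanity check; the main technical obstacle is carrying out the analogous manipulations in $\oo$, where the lack of associativity forces careful bookkeeping of which elements lie in a common associative subalgebra.

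Once $T_f(x)\in\s_x$ has been established, the self-map property of $T_f$ on $\OO''$ is automatic: both $V(N(f))$ and $V(f'_s)$ are circular (by Theorem~\ref{zerosonsphere} and by the constancy of $f'_s$ on each sphere in $\OO\setminus\rr$), so $\OO''$ is a union of spheres, and $T_f$ preserves each one of them. The restriction $(T_f)_{|_{\s_y}}$ is a bijection of $\s_y$, whose two-sided inverse will be $(T_{f^{-\punto}})_{|_{\s_y}}$ as shown below, and the resulting parametrisation $J \mapsto K_0(J)$ is a rational map of degree one in $J$, hence a Möbius-type self-map of $\s \cong \s_y$, which is conformal; in the quaternionic case this reduces to $T_f|_{\s_y}$ being the restriction of an inner automorphism of $\hh$. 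Continuity of $T_f$ under $f \in \mc{S}^0(\OO'')$ follows at once from the algebraic formula, and so does continuity of the inverse $T_{f^{-\punto}}$ because $f^{-\punto} \in \mc{S}^0(\OO')$ by Proposition~\ref{reciprocal}. In the real analytic case, I will show that $f'_s$, expressed locally by the series $f'_s(x) = \sum_{n,m} \binom{n}{2m+1}\,\re(x)^{n-2m-1}(-|\im(x)|^2)^m\,a_n$ from the expansion of a slice regular $f$, extends real analytically across the real points of $\OO'$; thus $\widehat{\OO}$ is well defined and the algebraic formula makes $T_f$ a real analytic self-map there, whose real analytic inverse is $T_{f^{-\punto}}$, yielding the diffeomorphism conclusion.

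Finally, to prove $T_{f^{-\punto}} = T_f^{-1}$, I will apply the already established identity with $f^{-\punto}$ in place of $f$. This is legitimate because $(f^{-\punto})^{-\punto} = f$ by Proposition~\ref{reciprocal} and $V((f^{-\punto})'_s) = V(f'_s) \cap \OO'$ by Theorem~\ref{thm:reciprocalrepresentation}, so $T_f$ and $T_{f^{-\punto}}$ share the common domain $\OO''$. The substitution yields $f(x) = f^{-\punto}(T_{f^{-\punto}}(x))^{-1}$, which combined with the original identity applied at $T_{f^{-\punto}}(x)$ gives $f(x) = f(T_f(T_{f^{-\punto}}(x)))$. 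The uniqueness of a preimage on each sphere—already used above to identify $T_f(x)$—forces $T_f \circ T_{f^{-\punto}} = \mr{id}_{\OO''}$, and a symmetric argument handles the other composition, completing the proof.
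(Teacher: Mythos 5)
Your overall architecture matches the paper's: reduce to a single sphere, show $T_f(x)\in\s_x$, verify $f(T_f(x))^{-1}=f^{-\punto}(x)$, deduce bijectivity by applying the identity twice together with the injectivity of $K\mapsto a_1+\beta Ka_2$, and get continuity/analyticity from the explicit formula. The problem is that the two steps carrying all the difficulty are announced but not executed. First, you never verify that $K_0=(f^c(x)^{-1}((Jf^c(x))b))b^{-1}$ is an imaginary unit (the paper does this via $n(uv)=n(u)n(v)$ and the vanishing of $t$ on commutators and associators); without this, $T_f(x)\in\s_x$ and everything downstream is unsupported. Second, and more seriously, the identity $f(T_f(x))^{-1}=f^{-\punto}(x)$ \emph{is} the content of the theorem, and you leave it as ``a direct computation'' with ``careful bookkeeping''. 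Your proposed mechanism --- repeated use of the $^*$-Artin theorem --- is not obviously adequate: Artin only associates subalgebras generated by \emph{two} elements, whereas the expression involves the three elements $x$, $f^c(x)$, $f'_s(x)$, which in general do \emph{not} associate (Example~\ref{ex:ell+2xi} exhibits $(x,f^c(x),f'_s(x))\neq0$, which is exactly why \eqref{eq:transformation} does not collapse to $f^c(x)^{-1}xf^c(x)$). The paper's computation instead reduces the required equality to $f^c(x)a_1+(If^c(x))a_2=N(f)(x)$ and then, via \eqref{decomposedconjugate} and \eqref{decomposednormal}, to the single identity $(I,a_1,a_2)=-(I,a_2,a_1)$, which holds because the associator of an alternative algebra is alternating --- a property genuinely stronger than what Artin gives you. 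Until such a chain is written down, the central claim is unproved.

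Two smaller gaps. The conformality of $(T_f)_{|_{\s_y}}$ is asserted because $J\mapsto K_0(J)$ is ``a rational map of degree one, hence M\"obius-type, hence conformal''; in eight real dimensions this is not a quotable fact and needs the paper's explicit decomposition $K_0=-a_1a_2^{-1}+(\Phi(a_1,a_2)-J\Phi(a_2,a_1))^{-1}a_2^{-1}$ into affine maps and the inversion $w\mapsto w^{-1}$, each of which is conformal. For the real analytic case, your extension of $f'_s$ uses the power-series coefficients of a slice regular $f$, but the hypothesis is only $f\in\mc{S}^\omega(\OO')$ on an arbitrary open circular $\OO'$; the extension across the reals should be taken from the general theory (the paper cites \cite[proposition 7]{perotti}), and you should also note that $T_f$ fixes the real points of $\widehat\OO$, so that \eqref{eq:reciprocal} and $T_f(\widehat\OO)=\widehat\OO$ persist there.
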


\begin{proof}
The first statement for $\OO' \cap (\rr \cup V(f'_s))$ is a direct consequence of theorem~\ref{thm:reciprocalrepresentation}.

Let us prove the second statement concerning $\OO''$.
For $\alpha,\beta\in\rr$ such that $\alpha+\beta\s\subseteq\OO''$, we know that $f(\alpha+\beta I)=a_1+Ia_2$ for all $I\in\s$, with $a_1:=\vs f(\alpha+\beta i)$ and $a_2:=\beta f'_s (\alpha+\beta i)$, and we know that $a_2\neq0$. At $x=\alpha+\beta I$ we have that
\begin{equation}\label{eq:technical}
T_f(x)=\alpha+\beta J,\quad J=(f^c(x)^{-1}((If^c(x))a_2))a_2^{-1}\,.
\end{equation}
Now, $J \in \s$. Indeed, $n(J)=n(I)=1$ because the norm function $n$ is multiplicative. Moreover, since the trace function $t$ vanishes on all associators and commutators,
\[t(J)=t((f^c(x)^{-1}If^c(x))(a_2a_2^{-1}))=t(f^c(x)^{-1}If^c(x))=t(I)=0\,.\]
Consequently,
\begin{align*}
f(T_f(x))^{-1} &= (a_1+Ja_2)^{-1}\\
&= \left(a_1+f^c(x)^{-1}((If^c(x))a_2)\right)^{-1}\\
&= \left(f^c(x)a_1+(If^c(x))a_2\right)^{-1} f^c(x)\,.
\end{align*}
This quantity coincides with $f^{-\punto}(x)=(N(f)(x))^{-1}f^c(x)$ if, and only if,
\[f^c(x)a_1+(If^c(x))a_2 = N(f)(x)\,.\]
The last equality is equivalent to each of the following equalities:
\[(a_1^c+Ia_2^c)a_1 + (Ia_1^c-a_2^c)a_2 = n(a_1)-n(a_2) + I t(a_2^ca_1)\,,\]
\[(Ia_2^c)a_1 + (Ia_1^c)a_2 =  I (a_2^ca_1) + I (a_1^ca_2) \,,\]
\[(I,a_2^c,a_1) = -(I,a_1^c,a_2) \,,\]
\[-(I,a_2,a_1) = (I,a_1,a_2)\,,\]
where we have taken into account formulas~\eqref{decomposedconjugate} and \eqref{decomposednormal} and the fact that $t(a_2),t(a_1)$ are elements of the nucleus of $\oo$. The last equality is true by the alternating property of $\oo$. This proves the second statement concerning $\OO''$.

Now let us fix $\alpha+\beta\s\subseteq\OO''$ and prove that $(T_f)_{|_{\alpha+\beta\s}}$ is a conformal transformation of $\alpha+\beta\s$. By formula \eqref{eq:technical}, $T_f(\alpha+\beta\s)\subseteq\alpha+\beta\s$. According to theorem~\ref{thm:reciprocalrepresentation}, equality~\eqref{eq:reciprocal} can be rewritten for $x=\alpha+\beta I$ and $T_f(x)=\alpha+\beta J$ as
\[\Phi(a_1,a_2)-I\Phi(a_2,a_1)=(a_1+Ja_2)^{-1}\,\]
whence
\[J=-a_1a_2^{-1}+(\Phi(a_1,a_2)-I\Phi(a_2,a_1))^{-1}a_2^{-1}\,.\]
All affine transformations of $\oo$ are conformal (see~\cite[\S4.6, p.205]{libroward}) and the map $\rho(w)=w^{-1}=|w|^{-2}w^c$ is conformal on $\oo\setminus\{0\}$, as it is the composition between the reflection $w\mapsto w^c$ and the inversion in the unit sphere of $\rr^8$ centred at $0$. Thus, $(T_f)_{|_{\alpha+\beta\s}}$ is a conformal transformation of $\alpha+\beta\s$, as desired.

We conclude that $T_f$ is a bijective self-map of $\OO''$ because $\OO''$ is a disjoint union of spheres $\s_y$ (for appropriate $y\in\OO''$), each mapped bijectively into itself by $T_f$. 

Now let us prove that $T_{f^{-\punto}}$ is the inverse map to $T_f:\OO''\to\OO''$. We have, for $f^{-\punto}\in\mc{S}(\OO')$, that $V(N(f^{-\punto}))=V(N(f)^{-\punto})=\emptyset$. Moreover,
\[\OO'\setminus(\rr \cup V((f^{-\punto})'_s)))=\OO''\]
by theorem~\ref{thm:reciprocalrepresentation}. Thus, $T_{f^{-\punto}}$ is a bijective self-map of $\OO''$ mapping $\s_y$ into itself for all $y\in\OO''$. By applying formula~\eqref{eq:reciprocal} twice, we get that for all $x\in\OO''$
\[f(x)= (f^{-\punto})^{-\punto}(x)= f^{-\punto}(T_{f^{-\punto}}(x))^{-1} = (f(T_f(T_{f^{-\punto}}(x)))^{-1})^{-1} = f(T_f(T_{f^{-\punto}}(x)))\,.\]
Since for each $y\in\OO''$ the composition $T_f \circ T_{f^{-\punto}}$ maps $\s_y$ into itself and $f_{|_{\s_y}}$ is an affine transformation of $\s_y$ into another sphere $a_1 + \s a_2$, we conclude that $T_f \circ T_{f^{-\punto}}(x)=x$ for all $x \in \OO''$.

To conclude the proof, let us consider the case when some regularity is assumed for $f$ and let us apply~\cite[proposition 7]{perotti}.

We first deal with the case when $f\in\mc{S}^0(\OO'')$. Then $f^c,f'_s:\OO''\to\oo\setminus\{0\}$ are continuous, whence $T_f$ is continuous in $\OO''$. Because $f^{-\punto}$ is also an element of $\mc{S}^0(\OO'')$, the inverse transformation $T_{f^{-\punto}}$ is continuous, too, and $T_f$ is a homeomorphism.

Secondly, we treat the case when $\OO'$ is open and $f\in\mc{S}^\omega(\OO')$. In this case, $f^c:\OO'\to\oo\setminus\{0\}$ is real analytic and $f'_s$ extends to a real analytic function $\OO'\to\oo$. If $V$ is its zero set and $\widehat\OO:=\OO'\setminus V$ then $T_f$ extends to a real analytic map on $\widehat\OO$ by the same formula~\eqref{eq:transformation}. For all $x\in\widehat\OO\setminus\OO''$, we observe that $x$ belongs to the nucleus $\rr$ of $\oo$ so that $T_f(x)=x$. This implies both that equality~\eqref{eq:reciprocal} is still fulfilled (thanks to the first statement) and that $T_f\big(\widehat\OO\big)=\widehat\OO$. The inverse map of $T_f:\widehat\OO\to\widehat\OO$ is the analogous real analytic extension of $T_{f^{-\punto}}$ to $\widehat\OO$. In particular, $T_f:\widehat\OO\to\widehat\OO$ is a real analytic diffeomorphism.
\end{proof}

The study conducted for quaternionic slice regular functions in~\cite[theorem 5.4]{poli} and in~\cite[proposition 5.2]{zerosopen} is consistent with the previous theorem:

\begin{remark}\label{rmk:tfextends}
Formula~\eqref{eq:transformation} reduces to
\begin{equation}\label{eq:transformationspecial}
T_f(x) = f^c(x)^{-1}xf^c(x)
\end{equation}
whenever $(x,f^c(x),f'_s(x))=0$. If this associator vanishes for all $x\in\OO'$ and $f'_s\not\equiv0$, then the previous formula extends $T_f$ to a bijective self-map of $\OO'$ (a homeomorphism if $f\in\mc{S}^0(\OO')$, a real analytic diffeomorphism if $\OO'$ is open and $f\in\mc{S}^\omega(\OO')$) with inverse 
\[T_f^{-1}(x)=T_{f^{-\punto}}(x)=T_{f^c}(x)=f(x)^{-1}xf(x)\,.\]
\end{remark}

On the other hand, in the octonionic case equality~\eqref{eq:transformationspecial} does not always hold true:

\begin{example}\label{ex:ell+2xi}
Consider the octonionic polynomial $h(x)=\ell+2xi$. By direct computation, $h'_s\equiv2i$ and $h^c(x)=-\ell-2xi$; in particular, $h^c(j)=2k-\ell$. Thus,
\[T_h(j)=-((2k-\ell)^{-1}((j(2k-\ell))i))i=\frac{-3j+4\ell i}5\]
while
\[h^c(j)^{-1}jh^c(j)=(2k-\ell)^{-1}j(2k-\ell)=-j\,.\]
We point out that
\[h(h^c(j)^{-1}jh^c(j))^{-1}=h(-j)^{-1}=(\ell+2k)^{-1}\neq\frac{\ell-2k}3=h^{-\punto}(j)\,,\]
where we have taken into account that $h^{-\punto}(x)=N(h)(x)^{-1}h^c(x)=-(1+4x^2)^{-1}(\ell+2xi)$ in $\oo\setminus\frac{1}{2}\s$.
\end{example}

The previous example shows that~\cite[Formula (5.2)]{wang} is only true under additional assumptions, such as those of Remark~\ref{rmk:tfextends}.

\begin{remark}
Formula~\eqref{eq:transformation} reduces to $T_f(x)=x$ whenever
\begin{itemize}
\item $f^c(x)\in\rr$; or
\item $x$ belongs to a slice $\cc_J$ that is preserved by $f$.
\end{itemize}
The thesis follows by direct computation in both cases. In the second case, we take into account the fact that $x,f^c(x)$ and $f'_s(x)$ all belong to the commutative subalgebra $\cc_J$.
\end{remark}

In general, $T_f$ does not always admit a natural extension to $\OO'$. Let us begin with a general remark and then provide some examples.

\begin{remark}
Let $f \in \mc{S}(\OO)$, set $\OO':=\OO\setminus V(N(f))$ and let $y\in \OO' \cap V(f'_s)$. If the set $\OO'':=\OO'\setminus(\rr \cup V(f'_s))\neq\emptyset$ includes a subset $U$ (whose closure includes $y$) such that $\lim_{U\ni x \to y}f^c(x)=a$ and $\lim_{U\ni x \to y}\frac{f'_s(x)}{|f'_s(x)|}=u$, then
\[\lim_{U\ni x \to y}T_f(x)= (a^{-1}((ya)u))u^c\,.\]
\end{remark}

We are now ready to provide an example where $T_f$ admits an extension to $\OO'$, though not through formula~\eqref{eq:transformationspecial}, and an example where it does not. In both examples, we will use the Leibniz rule \eqref{decomposedproduct} for spherical derivatives and the fact that for $\Delta(x) = x^2+1$ we have $\Delta'_s(\alpha+\beta I)=2\alpha$, $\vs\Delta(\alpha+\beta I) = \alpha^2-\beta^2+1$ for all $\alpha,\beta\in\rr$.

\begin{example}
Consider the octonionic polynomial $f(x)=-i+(x^2+1)j$. By direct computation,
\[f'_s(\alpha+\beta I)\ =\ \Delta'_s(\alpha+\beta I) j\ =\ 2 \alpha j\,.\]
The zero set of the extension of $f'_s$ to $\oo$ is $\im\oo$. Thus, $T_f$ extends to a real analytic transformation of $\OO'\setminus{\im\oo}$, where $\OO':=\oo\setminus V(N(f))$. Now, $\frac{f'_s(x)}{|f'_s(x)|} = j$ if $\re(x)>0$ and $\frac{f'_s(x)}{|f'_s(x)|} = -j$ if $\re(x)<0$. Thus, the transformation $T_f$ can be analytically extended to $\OO'$ by setting 
\[T_f(x)=-(f^c(x)^{-1}((xf^c(x))j))j\,.\]
We observe that the last expression coincides with $f^c(x)^{-1}xf^c(x)$ at all $x\in\hh$ but not at $x=\ell$. Indeed, $f^c(\ell)=i$ and $T_f(\ell)=(i((\ell i)j))j=-(i(\ell k))j=-(\ell j)j=\ell$, while $f^c(\ell)^{-1} \ell f^c(\ell)=-i\ell i = -\ell$.
\end{example}

\begin{example}\label{ex:tfdoesnotextend}
Consider the octonionic polynomial $f(x)=-i+(x^2+1)(j+x\ell)$. By direct computation,
\[f'_s(\alpha+\beta I)\ =\ \Delta'_s(\alpha+\beta I) (j+\alpha\ell) + \vs\Delta(\alpha+\beta I)\ell\ =\ 2 \alpha j + (3\alpha^2-\beta^2+1)\ell\,,\]
so that $V(f'_s) = \s$. For all $I \in \s$ we have 
\[f^c(I)=i,\quad \lim_{x \to I}f'_s(x)=0,\quad \lim_{\rr\ni\alpha \to 0^{\pm}}\frac{f'_s(\alpha+I)}{|f'_s(\alpha+I)|}=\pm j,\quad \lim_{\rr\ni\beta \to 1^\pm}\frac{f'_s(\beta I)}{|f'_s(\beta I)|}=\mp\ell\,.\]
As a consequence,
\[\lim_{\rr\ni\alpha \to 0}T_f(\alpha+I) = (i((Ii)j))j,\quad \lim_{\rr\ni\beta \to 1}T_f(\beta I) = (i((Ii)\ell))\ell\,.\]
By the computations made in the previous example, $\lim_{\rr\ni\alpha \to 0}T_f(\alpha+\ell)=\ell$, which is distinct from $\lim_{\rr\ni\beta \to 1}T_f(\beta \ell) = (i((\ell i)\ell))\ell =  (ii)\ell =-\ell$. Thus, $T_f$ does not admit a continuous extension to $\ell$.
\end{example}

The previous examples notwithstanding, theorem~\ref{thm:reciprocalformula} has the following useful consequence.

\begin{corollary}\label{cor:image}
Let $f \in \mc{S}(\OO)$ and set $\OO':=\OO\setminus V(N(f))$. If $C$ is a circular nonempty subset of $\OO'$ then
\[f^{-\punto}(C) = \{f(x)^{-1} \,|\, x \in C\}\,.\]
\end{corollary}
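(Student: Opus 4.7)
The plan is to split $C$ into the two parts on which Theorem~\ref{thm:reciprocalformula} provides different descriptions of $f^{-\punto}$, namely $C_1 := C \cap (\rr \cup V(f'_s))$ and $C_2 := C \cap \OO''$, with $\OO'' := \OO' \setminus (\rr \cup V(f'_s))$. First I would observe that both $\rr \cup V(f'_s)$ and its complement $\OO''$ inside $\OO'$ are unions of entire spheres $\s_y$: indeed $\rr$ is a disjoint union of singleton spheres, and $f'_s$, being a slice function, is constant on each nonreal sphere, so $V(f'_s)$ is itself a union of spheres; by complementation, $\OO''$ is too. Since $C$ is assumed circular, it is also a union of entire spheres, and therefore so are $C_1$ and $C_2$.

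On $C_1$, Theorem~\ref{thm:reciprocalformula} directly gives $f^{-\punto}(x) = f(x)^{-1}$ for every $x \in C_1$, so $f^{-\punto}(C_1) = \{f(x)^{-1} \mid x \in C_1\}$ with no further work. On $C_2$, the same theorem yields $f^{-\punto}(x) = f(T_f(x))^{-1}$ and asserts that $T_f$ is a bijective self-map of $\OO''$ whose restriction to each sphere $\s_y \subseteq \OO''$ is a bijection of $\s_y$ onto itself. The key step is then to combine this sphere-by-sphere preservation with the fact that $C_2$ is a union of entire spheres lying in $\OO''$, to conclude that $T_f(C_2) = C_2$. From this I obtain
\[
f^{-\punto}(C_2) = \{f(T_f(x))^{-1} \mid x \in C_2\} = \{f(y)^{-1} \mid y \in T_f(C_2)\} = \{f(y)^{-1} \mid y \in C_2\}\,,
\]
and taking the union $f^{-\punto}(C) = f^{-\punto}(C_1) \cup f^{-\punto}(C_2)$ paired against $C = C_1 \cup C_2$ concludes the argument.

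I expect no substantive obstacle; the corollary is essentially a bookkeeping consequence of Theorem~\ref{thm:reciprocalformula}. The only delicate point worth stressing is that it is not enough to know that $T_f$ maps $\OO''$ bijectively onto itself: one genuinely needs the sphere-by-sphere preservation statement, since $C$ can be a proper circular subset of $\OO'$, and the circularity of $C$ is precisely what guarantees that $C_2$ is a $T_f$-invariant set.
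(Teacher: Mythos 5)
Your argument is correct and is exactly what the paper intends: the corollary is stated there without an explicit proof, as an immediate consequence of Theorem~\ref{thm:reciprocalformula}, and your decomposition of $C$ into $C\cap(\rr\cup V(f'_s))$ and $C\cap\OO''$, combined with the fact that $T_f$ maps each sphere $\s_y\subseteq\OO''$ bijectively onto itself, supplies precisely the omitted bookkeeping. You are also right to flag the delicate point: it is the sphere-by-sphere invariance of $T_f$ (not mere bijectivity on $\OO''$) together with the circularity of $C$ that makes $C\cap\OO''$ a $T_f$-invariant set.
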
 

Our final considerations for this section concern the counterparts of formula~\eqref{eq:reciprocal} for the quotient or the product of two slice functions. The following result can be derived from~\cite[theorem 3.7]{gpsalgebra}, as well as theorems~\ref{thm:tame} and \ref{zerosonsphere}. It generalizes the results proven for quaternionic slice regular functions in~\cite[theorem 5.4]{poli},~\cite[proposition 8.1]{singularities} and~\cite[proposition 5.12]{advancesrevised} to all quaternionic slice functions.

\begin{theorem}
Let $f,g \in \mc{S}(\OO)$, where $\OO$ is a circular open subset of $\hh$. Then for all $x \in \OO\setminus V(f)$ it holds
\begin{equation} \label{eq:tfh}
(f \cdot g) (x) = f(x) g(T_{f^c}(x))\,.
\end{equation}
Moreover, for all $x \in \OO\setminus V(N(f))$, it holds
\begin{equation} \label{eq:tfh-punto}
(f^{-\punto} \cdot g) (x) = f(T_f(x))^{-1} g(T_f(x))\,.
\end{equation}
\end{theorem}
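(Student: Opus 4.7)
My approach relies on a direct computation made possible by the associativity of $\hh$. Via Remark~\ref{rmk:tfextends}, the transformation simplifies to $T_{f^c}(x) = f(x)^{-1}xf(x)$ wherever $f(x)\neq 0$, and analogously $T_f(x) = f^c(x)^{-1}xf^c(x)$ wherever $f^c(x)\neq 0$. For real $x \in \OO \setminus V(f)$ one has $T_{f^c}(x) = x$, and the slice product reduces to pointwise multiplication on $\OO \cap \rr$, so~\eqref{eq:tfh} holds trivially.

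For non-real $x \in \OO \setminus V(f)$, I write $x = \alpha + \beta I$ with $\beta \neq 0$ and set $u := \im(x)$, $A := \vs f(x)$, $B := f'_s(x)$, $C := \vs g(x)$, $D := g'_s(x)$, so that $f(x) = A + uB$. The point $\tilde x := T_{f^c}(x) = f(x)^{-1}xf(x)$ lies in $\s_x$, hence $\im(\tilde x) = f(x)^{-1}uf(x)$, and the constancy of spherical value and derivative on $\s_x$ gives $g(\tilde x) = C + f(x)^{-1}uf(x)\cdot D$. Associativity of $\hh$ then yields
\[ f(x)\,g(\tilde x) = f(x)C + uf(x)D = (A+uB)C + u(A+uB)D = AC + uBC + uAD + u^2BD, \]
which matches $(f \cdot g)(x)$ as computed via~\eqref{decomposedproduct}.

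For the second formula I apply the first with $f^{-\punto}$ in place of $f$; this is legitimate on $\OO \setminus V(N(f))$ because Proposition~\ref{reciprocal} places $f^{-\punto}$ in $\mc{S}(\OO \setminus V(N(f)))$, and by Theorem~\ref{zerosonsphere} together with $N(f^{-\punto}) = N(f)^{-\punto}$ (a consequence of~\eqref{Nfg}) the function $f^{-\punto}$ is nowhere zero there. One thus obtains $(f^{-\punto}\cdot g)(x) = f^{-\punto}(x)\,g(T_{(f^{-\punto})^c}(x))$. The crucial step is the identification $T_{(f^{-\punto})^c}(x) = T_f(x)$: writing $f^{-\punto}(x) = N(f)(x)^{-1}f^c(x)$ and noting that $N(f)$ is slice preserving by Theorem~\ref{thm:tame}, so that $N(f)(x)$ lies in the same $\cc_J$ as $x$ and commutes with $x$, one gets
\[ T_{(f^{-\punto})^c}(x) = f^{-\punto}(x)^{-1}x\,f^{-\punto}(x) = f^c(x)^{-1}N(f)(x)\,x\,N(f)(x)^{-1}f^c(x) = f^c(x)^{-1}xf^c(x) = T_f(x). \]
Combining this with the identity $f^{-\punto}(x) = f(T_f(x))^{-1}$ (which is~\eqref{eq:reciprocal} on $\OO''$ and persists on $\OO' \cap (\rr \cup V(f'_s))$, where both sides collapse to $f(x)^{-1}$ since $f$ is constant on $\s_x$ or $T_f(x)=x$) delivers~\eqref{eq:tfh-punto}.

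The main obstacle I anticipate is the identification $T_{(f^{-\punto})^c}(x) = T_f(x)$. While the computation itself is short, it hinges on the commutativity of $N(f)(x)$ with $x$, a simplification that is genuinely quaternionic: over $\oo$ the associator of $f^c(x)$, $N(f)(x)^{-1}$ and $x$ need not vanish, which is precisely why the theorem is confined to the quaternionic setting. Handling the degenerate cases in the first formula ($x \in \rr$, or $f'_s(x)=0$) is painless because associativity makes \eqref{eq:transformationspecial} a well-defined self-map of $\OO\setminus V(f)$, and the same associativity is what allows the final expansion $f(x) \cdot f(x)^{-1}uf(x) = uf(x)$ to be carried out without residue.
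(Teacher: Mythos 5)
Your argument is correct. Note, however, that the paper does not actually write out a proof of this theorem: it simply states that the result ``can be derived from [gpsalgebra, theorem 3.7], as well as theorems~\ref{thm:tame} and~\ref{zerosonsphere}'', i.e.\ it outsources the computation to an external reference. Your proposal replaces that citation with a self-contained verification, and every step checks out: the expansion $f(x)\,g\bigl(f(x)^{-1}xf(x)\bigr)=AC+uBC+uAD+u^2BD$ agrees with $\vs f\,\vs g+\im^2 f'_s\,g'_s+\im\,(\vs f\,g'_s+f'_s\,\vs g)$ from~\eqref{decomposedproduct} precisely because $u^2=-\beta^2$ is real and $\hh$ is associative; the degenerate cases $x\in\rr$ and $f'_s(x)=0$ are handled correctly; the non-vanishing of $f^{-\punto}$ on $\OO\setminus V(N(f))$ follows as you say from $N(f^{-\punto})=N(f)^{-\punto}$ and theorem~\ref{zerosonsphere}; and the identification $T_{(f^{-\punto})^c}=T_f$ via the centrality of $N(f)(x)$ in $\cc_J$ is the right mechanism (and your remark that this is exactly the point that breaks over $\oo$ matches the paper's counterexamples following the theorem). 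One presentational caveat: the statement's domain $\OO\setminus V(f)$ for~\eqref{eq:tfh} is strictly larger than the set $\OO\setminus(V(N(f^c))\cup\rr\cup V((f^c)'_s))$ on which $T_{f^c}$ was originally defined by~\eqref{eq:transformation}, so strictly speaking $T_{f^c}(x)$ must be read as the extension $f(x)^{-1}xf(x)$ of remark~\ref{rmk:tfextends}; your computation uses only this expression and only requires $f(x)\neq 0$, so no actual gap arises, but it is worth saying explicitly that this is the meaning being assigned to $T_{f^c}$ on the enlarged domain.
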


We remark that formula \eqref{eq:tfh-punto} is equivalent to
\begin{equation*}
(f^{-\punto} \cdot g) (x)=f^{-\punto}(x)g(T_f(x))\,.
\end{equation*}
The three formulas do not extend to the octonionic case, as proven in the next result and examples.

\begin{lemma}\label{lem:multbyconstant}
Let $f \in \mc{S}(\OO)$ and let $c$ be a constant different from $0$. For all $x \in \OO \cap (\rr \cup V(f'_s))$ it holds $(c \cdot f)(x)=cf(y)$ and $(f \cdot c)(x)=f(y)c$ for each $y\in\s_x$. For all $x\in\OO\setminus(\rr \cup V(f'_s))$ there exist unique $y,z\in\s_x$ such that 
\begin{align*}
&(c \cdot f)(x)=cf(y)\,,\\
&(f \cdot c)(x)=f(z)c\,;
\end{align*}
namely,
\begin{align*}
&y=(c^{-1}(x(cf'_s(x))))f'_s(x)^{-1}\,,\\
&z=((x(f'_s(x)c))c^{-1})f'_s(x)^{-1}\,.
\end{align*}
\end{lemma}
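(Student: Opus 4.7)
The strategy is to compute both $(c\cdot f)(x)$ and $cf(y)$ (respectively $(f\cdot c)(x)$ and $f(z)c$) via the decomposition formula \eqref{decomposedproduct}, compare them, and solve for $y,z$. The two easy cases come first. If $x\in\OO\cap\rr$, then $\s_x=\{x\}$ and the equalities $(c\cdot f)(x)=cf(x)$, $(f\cdot c)(x)=f(x)c$ are among the closing identities stated for real points in Section~\ref{sec:preliminaries}. If $x\in\OO\setminus\rr$ lies in $V(f'_s)$, then $f\equiv\vs f(x)$ on $\s_x$ and the vanishing of $f'_s(x)$ in \eqref{decomposedproduct} (together with $\vs c=c$, $c'_s\equiv 0$) gives $(c\cdot f)(x)=c\,\vs f(x)=cf(y)$ and similarly $(f\cdot c)(x)=\vs f(x)c=f(y)c$ for every $y\in\s_x$.

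The substance lies in the remaining case $x\in\OO\setminus(\rr\cup V(f'_s))$. Write $x=\alpha+\beta I$ with $I\in\s$, $\beta\neq 0$, and set $a_1:=\vs f(x)$, $a_2:=\beta f'_s(x)\neq 0$, so that $f(\alpha+\beta J)=a_1+Ja_2$ for every $J\in\s$. Plugging $g=c$ into \eqref{decomposedproduct} I would obtain
\[(c\cdot f)(x)=c\,a_1+\im(x)\bigl(c\,f'_s(x)\bigr)=ca_1+I(ca_2),\]
so that $cf(y)=ca_1+c(Ja_2)$ equals $(c\cdot f)(x)$ if and only if $c(Ja_2)=I(ca_2)$. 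Applying $c^{-1}$ on the left (using the left alternative identity $c^{-1}(cu)=u$) and $a_2^{-1}$ on the right (using the right alternative identity $(ua_2)a_2^{-1}=u$), the equation admits the unique solution $J=(c^{-1}(I(ca_2)))a_2^{-1}$; a direct expansion along $x=\alpha+\beta I$, together with $c^{-1}(c f'_s(x))=f'_s(x)$, rewrites $y=\alpha+\beta J$ in the claimed form $(c^{-1}(x(cf'_s(x))))f'_s(x)^{-1}$. The case $(f\cdot c)(x)=f(z)c$ is perfectly parallel: the equation becomes $(La_2)c=I(a_2c)$, with unique solution $L=((I(a_2c))c^{-1})a_2^{-1}$, which rewrites as the claimed $z$.

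The main obstacle is proving that the solutions $J$ and $L$ above actually belong to $\s$, so that $y,z\in\s_x\subseteq\OO$. Multiplicativity of $n$ gives $n(J)=n(c^{-1})n(I)n(ca_2)n(a_2)^{-1}=1$ and likewise $n(L)=1$. The delicate point is the trace. I would mimic the manipulation used in the proof of theorem~\ref{thm:reciprocalformula}: trace vanishes on commutators and associators of any compatible $^*$-algebra, hence $t(uv)=t(vu)$ and $t((uv)w)=t(u(vw))$ hold unconditionally inside a trace. Applying these identities iteratively, together with the identity $c^{-1}a_2^{-1}=(a_2c)^{-1}$ available in $\oo$, I can reduce
\[t(J)=t\bigl(((a_2c)^{-1}I)(a_2c)\bigr)=t\bigl((a_2c)((a_2c)^{-1}I)\bigr)=t\bigl(((a_2c)(a_2c)^{-1})I\bigr)=t(I)=0,\]
and by the symmetric reshuffle $t(L)=0$ as well.

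Uniqueness of $y$ and $z$ follows from the unique solvability of the two scalar equations in $J$ and $L$ via the alternative identities already quoted; no further work is needed. The hardest calculation is the trace reduction just sketched, but since it uses only features shared by every compatible $^*$-algebra, the same proof is valid verbatim when $\cc$ or $\hh$ is substituted for $\oo$, in accordance with the Convention.
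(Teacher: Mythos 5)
Your argument is correct and takes essentially the same route as the paper's proof: both decompose $f$ on $\s_x$ as $a_1+Ja_2$, read off $(c\cdot f)(x)=ca_1+I(ca_2)$ and $(f\cdot c)(x)=a_1c+I(a_2c)$ from \eqref{decomposedproduct}, and solve the resulting equations uniquely for the units via the alternative identities; you in fact do slightly more than the paper by explicitly checking that the solutions lie in $\s$, a point the paper leaves implicit. The one quibble is the first link of your trace chain: transposing $c$ and $a_2$ is not a cyclic permutation, so what cyclicity and top-level reassociation actually give is $t(J)=t\bigl(((ca_2)^{-1}I)(ca_2)\bigr)$ rather than $t\bigl(((a_2c)^{-1}I)(a_2c)\bigr)$ --- though both expressions reduce to $t(I)=0$, so the conclusion stands.
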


\begin{proof}
The first statement follows from the fact that $(c \cdot f)(x)=c\vs f(x)=cf(y)$ and $(f \cdot c)(x)=\vs f(x)c=f(y)c$ for all $x \in \OO \cap (\rr \cup V(f'_s))$ and for all $y\in\s_x$.

As for the second statement, pick any $x=\alpha+\beta J\in\OO\setminus(\rr \cup V(f'_s))$. Let $a_1:=\vs f(x)$ and $a_2:=\beta f'_s(x)\neq0$. Then $f(x)=a_1+Ja_2$ and
\begin{align*}
&(c\cdot f)(x) = ca_1+J(ca_2)\,,\\
&(f\cdot c)(x) = a_1c+J(a_2c)\,.
\end{align*}
The former formula equals $cf(\alpha+\beta H) = ca_1+c(Ha_2)$ if, and only if, $H=(c^{-1}(J(ca_2)))a_2^{-1}$. The latter formula equals $f(\alpha+\beta K)c = a_1c+(Ka_2)c$ if, and only if, $K=((J(a_2c))c^{-1})a_2^{-1}$. Since $y=\alpha+\beta H$ and $z=\alpha+\beta K$, the proof is complete.
\end{proof}

\begin{example}
Consider the octonionic polynomial $h(x)=\ell+2xi$ of example \ref{ex:ell+2xi}, which had $h'_s\equiv2i$. By the previous lemma, for any $c\in\oo\setminus\{0\}$ it holds
\[(h \cdot c)(x)=h\left(-((x(ic))c^{-1})i\right)c\,.\]
As a consequence, formula~\eqref{eq:tfh} is false for $f=h$ and for $g\equiv c$, even if we change $T_{f^c}$ to another transformation depending on $f$ and $g$. If we choose, for instance, $c=1+j$ then the point
$-((\ell(ic))c^{-1})i=\ell j$ is different from $\ell$ and $h(\ell j)=\ell+2(\ell j)i=\ell(1+2k)$ is different from $h(\ell)=\ell(1+2i)$.

Now let us show that formula~\eqref{eq:tfh-punto} is false when $f$ is a constant $c^{-1}$ (whence $f^{-\punto}\equiv c$), even if we change $T_f$ to another transformation depending only on $f$. By the previous lemma, 
\[(f^{-\punto}\cdot h)(x)=(c \cdot h)(x)=c\,h\left(-(c^{-1}(x(ci)))i\right)\,.\]
If we consider, instead of $h$, the function $g(x)=xc^{-1}$ (whence $g'_s\equiv c^{-1}$) then the previous lemma implies
\[(f^{-\punto}\cdot g)(x)=(c \cdot g)(x)=c\,g\left((c^{-1}(x(cc^{-1})))c\right)=c\,g(c^{-1}xc)\,.\]
If we choose, for instance, $c=1+j$, then the two transformations $x\mapsto-(c^{-1}(x(ci)))i$ and $x\mapsto c^{-1}xc$ are distinct: e.g.,
$-(c^{-1}(\ell(ci)))i=-\ell j$, while $c^{-1}\ell c=\ell j$.
\end{example}

\begin{example}
Consider again the octonionic polynomial $h(x)=\ell+2xi$ of example \ref{ex:ell+2xi}. Recall that $h'_s\equiv2i$ in $\oo$ and $h^{-\punto}(x)=-(1+4x^2)^{-1}(\ell+2xi)$ in $\oo\setminus\frac{1}{2}\s$. Let $p(x):=xj$ and let us show that formula~\eqref{eq:tfh-punto} does not hold for $f=h$ and $g=p$, even if we change $T_f$ to another transformation mapping each $\s_x$ into itself. By direct computation,
\[(h^{-\punto}\cdot p)(x)=-(1+4x^2)^{-1}x(\ell j+2xk)\]
takes the value $-k$ at $x=\ell i$, which is a point of $\s$. Now we can show that $h(J)^{-1}p(J)$ never takes the value $-k$ for $J\in\s$. Indeed, the squared modulus
\[|h(J)^{-1}p(J)|^2=(5-4\langle\ell i,J\rangle)^{-1}\]
equals $1$ if, and only if, $J=\ell i$. But $h(\ell i)^{-1}p(\ell i)=k$ is different from $-k$.
\end{example}

%%%%%%%%%%%%%%%%%%%%%%%

\section{Openness of slice regular functions}\label{sec:open}

In this section, we will state and prove the counterparts of the maximum modulus principle, the minimum modulus principle and the open mapping theorem for slice regular functions. Our statements subsume those proven in~\cite{advances,open,zerosopen,altavillawithoutreal,wang}. In the quaternionic case, a completely different approach will be adopted in~\cite{gporientation}.

Let us start with the first of these results, proven in~\cite{advances,open,zerosopen,wang} for slice domains and in~\cite{altavillawithoutreal} for quaternionic product domains.

\begin{theorem}[Maximum modulus principle]\label{thm:maximum}
Let $f\in\mc{SR}(\OO)$ and suppose $|f|$ has a local maximum point $x_0\in\OO$.
\begin{enumerate}
\item If $\OO$ is a slice domain then $f$ is constant.
\item If $\OO$ is a product domain and $x_0\in\cc_J^+$ then $f$ is constant in $\OO_J^+$.
\end{enumerate}
\end{theorem}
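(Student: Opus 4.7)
My approach is to reduce both parts to the classical complex-variable setting via Lemma~\ref{splitting} and then propagate local constancy through the slice function formalism. Fix $J\in\s$ with $x_0\in\cc_J$ and choose an orthonormal splitting basis $\{1,J,J_1,JJ_1,J_2,JJ_2,J_3,JJ_3\}$. By the splitting lemma, $f_{|_{\OO_J}}=\sum_{n=0}^{3}f_nJ_n$ with holomorphic $f_n:\OO_J\to\cc_J$; by orthonormality, $|f(x)|^2=\sum_{n=0}^{3}|f_n(x)|^2$ on $\OO_J$. Each $|f_n|^2$ is subharmonic (its Laplacian equals $4|f_n'|^2$), so $|f|^2$ is subharmonic on $\OO_J$.

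Since $x_0$ is a local maximum point of $|f|$, and hence of $|f|^2$, the subharmonic mean value inequality forces the set where $|f|^2$ attains the value $|f(x_0)|^2$ to be open in $\OO_J$; continuity makes it closed, so $|f|^2$ is constant on the connected component $U$ of $\OO_J$ containing $x_0$. This component $U$ equals $\OO_J$ in the slice domain case and $\OO_J^+$ in the product domain case, since $D$ has two disjoint conjugate components. Differentiating $\sum_nf_n\bar{f_n}\equiv\mathrm{const}$ on $U$ by $\partial_{\bar z}\partial_z$ and using $\partial_{\bar z}f_n=0$ yields $\sum_n|f_n'|^2\equiv 0$ on $U$. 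Thus every $f_n$ is constant on the connected set $U$, so $f\equiv c$ on $U$ for some $c\in\oo$; this already settles Part~2.

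For Part~1 it remains to extend $f\equiv c$ from $\OO_J$ to $\OO$. From $f_{|_{\OO_J}}\equiv c$ one reads off $\vs f(x)=c$ for $x\in\OO_J$ and $f'_s(x)=0$ for $x\in\OO_J\setminus\rr$. Since $\vs f$ and $f'_s$ are slice functions, they are constant on every sphere $\s_x\subseteq\OO$; every such sphere meets $\cc_J$, so $\vs f\equiv c$ and $f'_s\equiv 0$ throughout $\OO$. The decomposition $f=\vs f+\im\cdot f'_s$ then forces $f\equiv c$ on $\OO$.

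The main technical obstacle is the subharmonic step, namely passing from a local maximum of $|f|$ to actual local constancy of $f$ itself: this combines the subharmonic mean value inequality (to upgrade a local maximum to genuine constancy of $|f|^2$ on a connected component) with the Cauchy-Riemann identity $\sum|f_n'|^2=0$ (to upgrade constancy of $|f|^2$ to constancy of $f$). Everything else is a direct transcription between the slice and stem viewpoints, and no analogue of the propagation step is available in the product domain case, because $\OO_J^+$ does not meet every sphere $\s_x\subseteq\OO$.
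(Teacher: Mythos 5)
Your overall strategy is sound and genuinely different from the paper's at the key step: the paper invokes the maximum modulus principle for vector-valued holomorphic maps $F=(F_0,\dots,F_3):D\to\cc^4$ (citing Klimek), whereas you rederive that principle from the subharmonicity of $|f|^2=\sum_n|f_n|^2$ together with the identity $\partial_{\bar z}\partial_z\sum_n f_n\overline{f_n}=\sum_n|f_n'|^2$. Your propagation step in Part~1 (constancy of $\vs f$ and $f'_s$ on each sphere $\s_x$, every such sphere meeting $\cc_J$) is also more explicit than the paper's, which disposes of it with a terse ``whence in $\OO$''. What your route buys is self-containedness; what it costs is the extra care described below.

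There is one intermediate claim that is wrong as stated: for a subharmonic function $u$, a local maximum at $z_0$ does \emph{not} force the level set $\{u=u(z_0)\}$ to be open, so your open-closed argument does not by itself give constancy of $|f|^2$ on the whole component $U$. Counterexample: $u(z)=\max(0,\re z)$ is subharmonic, every point of the open left half-plane is a local maximum point, yet $\{u=0\}$ is a closed half-plane, not open, and $u$ is not constant. The sub-mean-value inequality only yields that $u$ is constant on a \emph{neighbourhood} of the local maximum point. The repair is to reorder your own steps: from local constancy of $|f|^2$ near $z_0$, your Laplacian computation gives $\sum_n|f_n'|^2\equiv0$, hence $f_n'\equiv0$, on a neighbourhood of $z_0$; the identity theorem for the holomorphic functions $f_n'$ then propagates $f_n'\equiv0$ to all of the connected component $U$, so each $f_n$ is constant on $U$. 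With that reordering the rest of your argument goes through, including the reduction $U=\OO_J$ (slice domain case) versus $U=\OO_J^+$ (product domain case).
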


\begin{proof}
Suppose $\OO$ to be either a product domain or a slice domain. That is, $\OO=\OO_D$ where either: $D$ intersects the real line $\rr$, is connected and preserved by complex conjugation; or $D$ does not intersect $\rr$ and has two connected components switched by complex conjugation.

If $x_0\in\cc_J$, consider an orthonormal splitting basis 
\[\{1,J,J_1,JJ_1,J_2,JJ_2,J_3,JJ_3\}\]
for $\oo$ and apply lemma~\ref{splitting}: there are holomorphic functions $f_n:\OO_J\to\cc_J$ such that 
\[f_{|_{\OO_J}}=\sum_{n=0}^3 f_n J_{n}\,,\]
with $J_0:=1$. Let us define a map $F=(F_0,F_1,F_2,F_3): D\to\cc^4$ by letting 
\[F_n:=\phi_J^{-1}\circ f_n \circ \phi_J\,,\]
where $\phi_J^{-1}$ denotes the inverse of the bijection $\phi_J: D \to \OO_J,\ \alpha+i\beta\mapsto\alpha+J\beta$. 
The Euclidean norm $\Vert F(z)\Vert$ equals $|f(\phi_J(z))|$, whence $\Vert F\Vert$ has a local maximum point $z_0:=\phi_J^{-1}(x_0)\in D$. Since $F$ is holomorphic, it follows from the maximum modulus principle for holomorphic complex maps~\cite[theorem 2.8.3]{klimek} that $F$ is constant in the connected component of $D$ that includes $z_0$. As a consequence, $f$ is constant in the connected component of $\OO_J$ that includes $x_0$.

If $\OO$ is a product domain and $x_0\in\cc_J^+$ then $f$ is constant in $\OO_J^+$. If, on the other hand, $\OO$ is a slice domain then $f$ is constant in $\OO_J$, whence in $\OO$. 
\end{proof}

In the case of a product domain, a function that is constant on a half-slice $\OO_J^+$ may well have a local maximum point.

\begin{example}
Let $f\in\mc{SR}(\oo\setminus\rr)$ be defined by the formula
\[g(x)=3i+x\cdot f(x) = 3i+x f(x),\quad f(x)=1+\frac{\im(x)}{|\im(x)|}i\]
(using the function of Example~\ref{1fin}). In particular, $g_{|_{\cc_i^+}}\equiv3i$ and $|g|_{|_{\cc_i^+}}\equiv3$. We can see that $i$ is a local maximum point for $|g|$ as follows. First, we observe that
\[|g(x)|^2-3^2=|x|^2|f(x)|^2+6\langle i,xf(x)\rangle\,.\]
If $x\in\cc_J^+$ then $f(x)=1+Ji$, $|f(x)|^2=2-2\langle i,J \rangle$ and
\begin{align*}
\langle i,xf(x)\rangle&=\langle i,x\rangle+\left\langle i,xJi\right\rangle=\langle i,\im(x)\rangle+\left\langle 1,xJ\right\rangle=\langle i,\im(x)\rangle-|\im(x)|\\
&=|\im(x)|(\langle i,J\rangle-1)=-1/2\,|\im(x)|\,|f(x)|^2\,.
\end{align*}
Thus,
\[|g(x)|^2-3^2=(|x|^2-3|\im(x)|)|f(x)|^2\]
If we consider the product domain $U:=\{x\in\oo : |x|^2<3|\im(x)|\}$, which includes $i$, then
\[|g(i)|=3=\max_U|g|\,.\]
We point out that the same is true if we replace $i$ with any $x_0\in U_i^+$, while for $V:=\oo\setminus\overline{U}$ and for all $y_0\in V_i^+$ it holds $|g(y_0)|=3=\min_V|g|$.
\end{example}

Before turning towards the minimum modulus principle, we prove a technical lemma (cf.~\cite[proposition 6.13]{QSFCalculus} for the quaternionic case).

\begin{lemma}\label{lemma:maxmin}
Let $f\in\mc{S}(\OO)$. Choose $y=\alpha+J\beta\in \OO$ (with $\alpha,\beta\in\rr,\beta>0,J\in\s$) and let
\[v:=\vs f(y) f'_s(y)^c\,.\]
\begin{enumerate}
\item If $v\in\rr$ then $|f|_{|_{\s_y}}$ is constant.
\item Suppose $v\not\in\rr$ and set $I:=\frac{\im(v)}{|\im(v)|}$. Then the function $|f|_{|_{\s_y}}$ attains its maximum at $\alpha+\beta I$ and its minimum at $\alpha-\beta I$. In particular, the maximum and minimum of $|f|_{|_{\s_y}}$ are attained at points belonging to the subalgebra $A_{f,y}$ generated by $\vs f(y)$ and $f'_s(y)$. Moreover, $|f|_{|_{\s_y}}$ has no other local extremum.
\end{enumerate}
Thus, if $f(y)=0$ then either $f_{|_{\s_y}}\equiv0$ or $y$ is the unique local minimum point of $|f|_{|_{\s_y}}$.
\end{lemma}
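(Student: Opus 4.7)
Set $a := \vs f(y)$ and $b := f'_s(y)$. Since the spherical value and derivative are constant on $\s_y$, we have $f(\alpha+\beta K) = a + \beta K b$ for every $K\in\s$, and the quantity $v$ in the statement equals $ab^c$.

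The plan is to identify $|f(\alpha+\beta K)|^2$ as the restriction to $\s$ of an affine function of $K$, and then read off its extrema. Expanding $n(a+\beta K b)$ via the polarization identity $n(u+w) = n(u)+n(w)+t(uw^c)$, together with $n(Kb)=n(K)n(b)=n(b)$, $(Kb)^c = b^c K^c = -b^c K$, and the vanishing of the trace on associators (which yields $t(a(b^c K)) = t((ab^c) K) = t(vK)$), one obtains
\[|f(\alpha+\beta K)|^2 = n(a)+\beta^2 n(b) - \beta\, t(vK).\]
Applying $\tfrac{1}{2}t(xy^c)=\langle x,y\rangle$ together with $K^c=-K$ gives $t(vK)=-2\langle v,K\rangle$; and since $\re(v)\in\rr$ is orthogonal to $K\in\s$, the final formula reads
\[|f(\alpha+\beta K)|^2 = n(a)+\beta^2 n(b)+2\beta\langle \im(v), K\rangle.\]

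Both assertions now follow. If $v\in\rr$ then $\im(v)=0$ and $|f|$ is constant on $\s_y$, proving (1). If $v\notin\rr$ then $\im(v)\neq 0$ and $I := \im(v)/|\im(v)| \in \s$; the linear functional $K\mapsto\langle\im(v),K\rangle$ restricted to the unit sphere $\s$ attains its unique maximum at $I$, its unique minimum at $-I$, and has no other critical point, so $|f|_{|_{\s_y}}$ has a unique maximum at $\alpha+\beta I$, a unique minimum at $\alpha-\beta I$, and no other local extremum. Both extremizers lie in the (unital) subalgebra $A_{f,y}$, which contains $a$ and $b$, hence also $a^c = t(a)-a$ and $b^c = t(b)-b$ (since $t(a),t(b)\in\rr\subseteq A_{f,y}$), hence $v=ab^c$, hence $\im(v)=v-t(v)/2$, and finally $I=\im(v)/|\im(v)|$ (division by a positive real).

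For the last claim, assume $f(y)=0$, i.e.\ $a = -\beta J b$; then $v=ab^c = -\beta n(b) J$. If $b=0$, then also $a=0$ and $f_{|_{\s_y}}\equiv 0$. Otherwise $n(b)>0$, so $v$ is a nonzero real multiple of $J\in\s$, giving $v\notin\rr$, $\im(v)=v=-\beta n(b) J$, and $I=-J$; by (2), the unique local minimum of $|f|_{|_{\s_y}}$ is at $\alpha-\beta I=\alpha+\beta J=y$. The only genuinely delicate step in the whole argument is the expansion of $n(a+\beta K b)$ in a non-associative algebra: the reassociation $t(a(b^c K))=t((ab^c)K)$ — the essential input — relies on the vanishing of the trace on associators in the compatible $^*$-algebra $\oo$; once it is in place, the argument reduces to elementary linear algebra on the sphere $\s$.
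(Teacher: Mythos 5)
Your proof is correct and follows essentially the same route as the paper's: both express $|f|^2$ on $\s_y$ as a constant plus the linear functional $2\langle v,\im(x)\rangle$ (using the vanishing of the trace on associators to reassociate) and then read off the extrema of that functional on the sphere. Your write-up merely makes explicit the reassociation step and the verification of the final claim about zeros, which the paper leaves implicit.
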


\begin{proof}
For $x\in\s_y$ it holds 
\[f(x)=\vs f(x)+\im(x)f'_s(x)=\vs f(y)+\im(x)f'_s(y)\,,\]
whence
\[|f(x)|^2 = |\vs f(y)|^2+|f'_s(y)|^2+2\langle\vs f(y),\im(x)f'_s(y)\rangle =  |\vs f(y)|^2+|f'_s(y)|^2+2\langle v,\im(x)\rangle\,.\]
If $\im(v)=0$ then $\langle v,\im(x)\rangle=0$ and $|f(x)|^2$ is constant in $\s_y$. Otherwise, $|f(x)|^2$ is maximal (respectively, minimal) when $\im(x)$ is a rescaling of $\im(v)$ with a positive (respectively, negative) scale factor. Moreover, it does not admit any other local extremum.
\end{proof}

We are now ready for the minimum modulus principle. In the quaternionic case, separate results had been proven in~\cite{advances,open,zerosopen,altavillawithoutreal}. In the octonionic case,~\cite{wang} considered only the case of a slice regular function whose modulus has a local minimum point in $\rr$. For $f\in\mc{SR}(\OO)$, after restricting $f$ to $\OO':=\OO \setminus V(N(f))$, we will deal with the points of $\OO'':=\OO'\setminus (\rr\cup V(f'_s))$ by means of the transformation $T_f:\OO''\to\OO''$ defined in theorem \ref{thm:reciprocalformula}. The points in $\rr$ and the interior points of $V(f'_s)$ will be even easier to deal with, while any point of the boundary $\partial\,V(f'_s)$ (defined as the closure minus the interior of the set, as usual) will require an extra assumption.

\begin{theorem}\label{thm:minimum}
Let $f\in\mc{SR}(\OO)$. Suppose $x_0\in\OO$ to be a local minimum point for $|f|$, but not a zero of $f$. In case $x_0\in\partial\,V(f'_s)$, take one of the following additional assumptions:
\begin{enumerate}
\item[(a)] there exists a circular neighbourhood $C$ of $x_0$ such that $|f(x_0)|=\min_C|f|$; or
\item[(b)] there exist $w_0\in\s_{x_0}$ and an open neighbourhood $H$ of $w_0$ in $\OO$ such that $T_f$ continuously extends to $H$ and the extension maps $w_0$ to $x_0$.
\end{enumerate}
If $\OO$ is a slice domain, then $f$ is constant. If $\OO$ is a product domain then there exists $J\in \s$ such that $f_{|_{\OO_J^+}}\equiv f(x_0)$.
\end{theorem}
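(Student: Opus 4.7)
The plan is to reduce the statement to the maximum modulus principle (Theorem~\ref{thm:maximum}) applied to the reciprocal $f^{-\punto}$. First I check that $x_0\in\OO':=\OO\setminus V(N(f))$: if $N(f)(x_0)=0$ then $\s_{x_0}\subseteq V(N(f))$ by Theorem~\ref{zerosonsphere}, so some $y\in\s_{x_0}$ satisfies $f(y)=0$, and Lemma~\ref{lemma:maxmin} forces $|f|_{|_{\s_{x_0}}}$ either to vanish identically (contradicting $f(x_0)\neq 0$) or to attain its unique local minimum at $y\neq x_0$, causing $|f|$ to dip strictly below $|f(x_0)|$ at points of $\s_{x_0}$ arbitrarily close to $x_0$, in contradiction with the local minimality of $|f|$ at $x_0$. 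By Proposition~\ref{reciprocal}, $f^{-\punto}\in\mc{SR}(\OO')$, and since $V(N(f))$ is a union of isolated points and isolated spheres (Theorem~\ref{structure_zeros}), $\OO'$ inherits the slice, respectively product, domain structure of $\OO$.

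The central step is to locate $w_0\in\s_{x_0}\cap\OO'$ and an open neighborhood $H\subseteq\OO'$ of $w_0$ on which $|f^{-\punto}|\leq|f^{-\punto}(w_0)|=|f(x_0)|^{-1}$. I split into cases. If $x_0\in\OO''$, Theorem~\ref{thm:reciprocalformula} provides a local diffeomorphism $T_f$; setting $w_0:=T_f^{-1}(x_0)\in\s_{x_0}$, the identity $|f^{-\punto}(w)|=|f(T_f(w))|^{-1}$ transfers the local minimum of $|f|$ at $x_0$ into a local maximum of $|f^{-\punto}|$ at $w_0$. If $x_0\in\operatorname{int}(V(f'_s))$, then $f'_s\equiv 0$ near $x_0$, hence $f^{-\punto}=f^{-1}$ and $|f^{-\punto}|=|f|^{-1}$ locally, so $w_0:=x_0$ works. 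If $x_0\in\rr$, any small open ball $B(x_0,r)\subseteq\OO'$ is automatically a circular neighborhood of $x_0$ (since $\s_{x_0}=\{x_0\}$), so Corollary~\ref{cor:image} yields $\sup_B|f^{-\punto}|=(\min_B|f|)^{-1}=|f(x_0)|^{-1}=|f^{-\punto}(x_0)|$, and again $w_0:=x_0$ works.

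In the remaining case $x_0\in\partial V(f'_s)\setminus\rr$, $T_f$ may genuinely fail to extend continuously to $x_0$, which is why hypothesis (a) or (b) is needed. Under (a), since $x_0\in\partial V(f'_s)\subseteq V(f'_s)\cup\rr$ gives $f^{-\punto}(x_0)=f(x_0)^{-1}$ by Theorem~\ref{thm:reciprocalformula}, Corollary~\ref{cor:image} applied to $C$ yields $\sup_C|f^{-\punto}|=|f(x_0)|^{-1}=|f^{-\punto}(x_0)|$ and we take $w_0:=x_0$. Under (b), continuity of the extension of $T_f$ and of $f^{-\punto}$ at $w_0$ forces $f^{-\punto}(w_0)=f(x_0)^{-1}$; on the dense subset $H\cap\OO''$ (dense because $x_0\notin\operatorname{int}(V(f'_s))$ implies $f'_s\not\equiv 0$ near $x_0$) the formula $|f^{-\punto}(w)|=|f(T_f(w))|^{-1}$, continuity of the extension, and local minimality of $|f|$ at $x_0$ yield $|f^{-\punto}(w)|\leq|f(x_0)|^{-1}$; continuity of $f^{-\punto}$ then propagates this bound to all of $H$.

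With a local maximum of $|f^{-\punto}|$ at $w_0$ secured, I invoke Theorem~\ref{thm:maximum} for $f^{-\punto}$ on the slice, respectively product, domain $\OO'$: either $f^{-\punto}\equiv f(x_0)^{-1}$ on $\OO'$, or $f^{-\punto}\equiv f(x_0)^{-1}$ on $\OO'^+_J$, where $J\in\s$ is characterized by $w_0\in\cc_J^+$. Taking pointwise reciprocals and combining continuity of $f$ with $f(x_0)\neq 0$ shows that $V(N(f))$ must have empty intersection with $\OO$, respectively $\OO^+_J$, yielding $f\equiv f(x_0)$ on the desired set. The main obstacle is the non-real boundary case $x_0\in\partial V(f'_s)\setminus\rr$: there $T_f$ can fail to admit any continuous extension to $x_0$ (cf. Example~\ref{ex:tfdoesnotextend}), so the direct strategy breaks down; the two extra hypotheses are tailored precisely to bypass this failure, the former by exploiting the image formula of Corollary~\ref{cor:image} on a circular set, the latter by providing the extension as a datum.
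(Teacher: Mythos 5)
Your overall strategy is the paper's: pass to $f^{-\punto}$, manufacture a local maximum of $|f^{-\punto}|$ at some $w_0\in\s_{x_0}$, and invoke the maximum modulus principle. Your construction of the local maximum (splitting into $x_0\in\OO''$, $x_0\in\rr$, $x_0$ interior to $V(f'_s)$, and $x_0\in\partial V(f'_s)$ under (a) or (b)) is sound and in fact somewhat more transparent than the paper's $W\cup K$ bookkeeping; the only soft spot there is that the density of $H\cap\OO''$ in $H$ under hypothesis (b) really rests on the real-analyticity of the extension of $f'_s$ (so that its zero set has empty interior unless $f'_s\equiv 0$), not merely on $f'_s\not\equiv 0$ near $x_0$.

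The genuine gap is in the last step for product domains. From theorem~\ref{thm:maximum} you obtain $f^{-\punto}\equiv f(x_0)^{-1}$ on the half-slice $\OO'^+_I$ containing $w_0$, and you then pass to $f\equiv f(x_0)$ on a half-slice by ``taking pointwise reciprocals.'' But $f^{-\punto}(x)\neq f(x)^{-1}$ in general: the correct relation is $f^{-\punto}(x)=f(T_f(x))^{-1}$ (theorem~\ref{thm:reciprocalformula}), so the constancy of $f^{-\punto}$ on $\OO'^+_I$ only yields $f\equiv f(x_0)$ on the image $S=T_f(\OO''\cap\cc_I^+)$, which a priori is scattered across many slices. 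This is precisely where the paper has to work: it uses lemma~\ref{lemma:maxmin}(2) to place each point of $S$ in the associative subalgebra $A_{f,x}$, so that remark~\ref{rmk:tfextends} gives $T_f(y)=f(x_0)yf(x_0)^{-1}$ on $W_I^+$ and hence $S\subseteq\cc_J^+$ with $J=f(x_0)If(x_0)^{-1}$ (note $J$ need not equal $I$); it then upgrades ``$f\equiv f(x_0)$ on the open set $S\subseteq\OO_J^+$'' to all of $\OO_J^+$ by the identity principle on the slice, and it separately disposes of the degenerate case $S=\emptyset$ by showing $x_0$ is then also a local maximum and applying theorem~\ref{thm:maximum} to $f$ itself. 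None of these steps is present in your argument, and without them the product-domain conclusion does not follow. (In the slice-domain case your conclusion is fine, since there $f^{-\punto}$ is constant on all of $\OO'$ and $f=(f^{-\punto})^{-\punto}$ can be inverted as an element of the algebra.)
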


\begin{proof}
Since $f(x_0)\neq0$, lemma~\ref{lemma:maxmin} tells us that $f$ (whence $N(f)$) has no zeros in $\s_{x_0}$. In other words, $\s_{x_0}$ is included in the domain $\OO':=\OO \setminus V(N(f))$ of the reciprocal $f^{-\punto}$. Let $U$ be an open neighbourhood of $x_0$ in $\OO'$ such that 
\[|f(x_0)|=\min_U|f|\,.\]
We consider the set
\[K:=(U\cap\rr)\cup\{y\in\OO'\setminus\rr\ |\ \vs f(y) f'_s(y)^c\in\rr,\ \s_y\cap U\neq\emptyset\}\,,\]
which includes $U\cap\rr$ and $U\cap V(f'_s)$. Thus, $U\setminus K$ is included in $\OO'':=\OO'\setminus (\rr\cup V(f'_s))$ and we can set $W:=T_f^{-1}(U\setminus K)$.\\
\noindent{\bf Claim 1.} {\em If $x_0\not\in K$, then the equality
\[|f^{-\punto}(y_0)|=|f(x_0)|^{-1}=\max_{W\cup K}|f^{-\punto}|\]
holds for $y_0=T_f^{-1}(x_0)$. If $x_0\in K$ then it holds for all $y_0\in\s_{x_0}$.}\\
{\bf Proof.} We apply theorem~\ref{thm:reciprocalformula}, corollary~\ref{cor:image} and lemma~\ref{lemma:maxmin} repeatedly. We first observe that
\[\sup_{W\cup K} |f^{-\punto}| = \sup_{U\cup K} |f|^{-1} = \sup_{U} |f|^{-1} = |f(x_0)|^{-1}\,.\]
Moreover, if $x_0\in U\setminus K\subseteq\OO''$ then $f^{-\punto}(y_0)=f(x_0)^{-1}$ for $y_0=T_f^{-1}(x_0)$. If $x_0\in K\setminus (\rr\cup V(f'_s))$ then for all $y_0\in\s_{x_0}$ it holds $f^{-\punto}(y_0)=f(T_f(y_0))^{-1}$ and $|f(T_f(y_0))|=|f(x_0)|$. If $x_0\in \rr\cup V(f'_s)$ then $f^{-\punto}(y_0)=f(x_0)^{-1}$ for all $y_0\in\s_{x_0}$. {\tiny$\blacksquare$}\\
{\bf Claim 2.} {\em If $x_0\in\OO''$, then $W\cup K$ is a neighbourhood of $T_f^{-1}(x_0)$. If $x_0$ is a real point or an interior point of $V(f'_s)$ then $W\cup K$ is a neighbourhood of $\s_{x_0}$. In case {\it (a)}, if we replace $U$ with $C$, then $W\cup K=T_f^{-1}(C\setminus K)\cup K$ is a neighbourhood of $\s_{x_0}$. In case {\it (b)}, $W\cup K$ is a neighbourhood of $w_0$.}\\
{\bf Proof.} If $x_0\in\OO''$, then $T_f^{-1}(U\cap\OO'')$ is an open neighbourhood of $T_f^{-1}(x_0)$ included in $W\cup K$. If $x_0\in\rr$ then $U$ includes a circular open neighbourhood of $x_0$, which is also included in $W\cup K$. If $x_0$ is an interior point of $V(f'_s)$ then $K$ is a circular neighbourhood of $\s_{x_0}$. In case {\it (a)}, if $U$ is the circular open neighbourhood $C$ of $x_0$ then $W\cup K=T_f^{-1}(C\setminus K)\cup K=C$. Now suppose {\it (b)} holds, so that there exist $w_0\in\s_{x_0}$ and an open neighbourhood $H$ of $w_0$ in $\OO'$ such that $T_f$ extends to a continuous map $T:\OO''\cup H\to \OO'$ with $T(w_0)=x_0$. Let us consider $T^{-1}(U)$, which is an open neighbourhood of $w_0$, and let us show that $T^{-1}(U)\subseteq W\cup K$. In fact, for all $y\in T^{-1}(U)\setminus K\subseteq T^{-1}(U)\cap\OO''$ it holds $T(y)\in U$ and $T(y)=T_f(y)\in\s_y$, whence $T_f(y)\in U\setminus K$. Thus, $y\in W=T_f^{-1}(U\setminus K)$, as desired. {\tiny$\blacksquare$}

As a consequence, we can apply the maximum modulus principle~\ref{thm:maximum} to $f^{-\punto}$ at some point $y_0\in\s_{x_0}$.
If $\OO$ (whence $\OO'$) is a slice domain, we conclude that $f^{-\punto}$ is constant in $\OO'$. Thus, $f$ is constant in $\OO'$, whence in $\OO$.
If $\OO$ is a product domain, we reason as follows.

\begin{itemize}
\item The function $f^{-\punto}$ is constant in the half-slice $\OO'^+_I$ through $y_0$. Moreover, the point $y_0$ (whence $I$) can be chosen so that the constant is $f(x_0)^{-1}$. With this choice, every $y \in W_I^+:=W\cap\cc_I^+$ is still a local maximum for $|f^{-\punto}|$ with $f^{-\punto}(y)=f(x_0)^{-1}$ and every $x$ in $S:=T_f(W_I^+)$ is a local minimum for $|f|$, with $f(x)=f(x_0)$. Moreover, in $K$ it holds $|f^{-\punto}|\equiv |f(x_0)|^{-1}$ and $|f|\equiv|f(x_0)|$.
\item Let us prove that $S$ is included in a half-plane $\cc_J^+$. By lemma~\ref{lemma:maxmin}, every $x\in S$ is included in the subalgebra $A_{f,x}$ generated by $\vs f(x)$ and $f'_s(x)$, which is associative by Artin's theorem. Thanks to Remark~\ref{rmk:tfextends},
\[T_f^{-1}(x) = f(x)^{-1}xf(x) = f(x_0)^{-1}xf(x_0)\,.\]
As a consequence,
\[T_f(y)=f(x_0)yf(x_0)^{-1}\]
for all $y\in W_I^+$. Thus, $S = T_f(W_I^+)=f(x_0)W_I^+f(x_0)^{-1}$ is included in the half-plane $\cc_J^+$ with $J:=f(x_0)If(x_0)^{-1}$.
\item If $S$ is not empty then, since $S$ is an open subset of $\OO_J^+$, it follows that $f\equiv f(x_0)$ on $\OO_J^+$.
\item If $S$ is empty then $W_I^+$ is empty. Because $W_I^+\cup K_I^+$ is a neighbourhood of $y_0$ in $\OO_I^+$, it follows that $K_I^+$ is a neighbourhood of $y_0$ in $\OO_I^+$. As a consequence, the circular set $K$ is a neighbourhood of $\s_{y_0}=\s_{x_0}$ in $\OO'$. Since $|f|\equiv|f(x_0)|$ in $K$, the point $x_0$ is also a local maximum point for $|f|$. By theorem~\ref{thm:maximum}, $f$ is constant on the half-slice containing $x_0$.\qedhere
\end{itemize}
\end{proof}

Remark~\ref{rmk:tfextends} allows us to draw the following consequence, which applies to all slice preserving regular functions and to all quaternionic slice regular functions.

\begin{corollary}[Associative minimum modulus principle]\label{cor:w-a-min-modulo}
Let $f\in\mc{SR}(\OO)$ and assume that the associators $(x,f^c(x),f'_s(x))$ vanish for all $x\in\OO\setminus\rr$. Suppose $|f|$ admits a local minimum point $x_0\in\OO$, which is not a zero of $f$. If $\OO$ is a slice domain, then $f$ is constant. If $\OO$ is a product domain then there exists $J\in \s$ such that $f_{|_{\OO_J^+}}\equiv f(x_0)$.
\end{corollary}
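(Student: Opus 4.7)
The plan is to deduce the corollary from Theorem~\ref{thm:minimum} by verifying its additional assumption (b) whenever it is needed. First, I would check that $x_0$ lies in $\OO':=\OO\setminus V(N(f))$, so that $f^{-\punto}$ (and hence $T_f$) is defined in a neighbourhood of $\s_{x_0}$. If some $y\in\s_{x_0}$ were a zero of $f$, then by the last sentence of Lemma~\ref{lemma:maxmin} either $f_{|_{\s_y}}\equiv 0$ (impossible since $f(x_0)\neq 0$ and $x_0\in\s_y$) or $y$ is the unique local minimum point of $|f|_{|_{\s_y}}$; but $x_0$ is also such a local minimum, forcing $x_0=y$ and contradicting $f(x_0)\neq 0$. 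Thus $\s_{x_0}\cap V(f)=\emptyset$, and Theorem~\ref{zerosonsphere} gives $x_0\notin V(N(f))$.

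Next, if $x_0\notin\partial V(f'_s)$, Theorem~\ref{thm:minimum} applies directly. Otherwise assume $x_0\in\partial V(f'_s)\cap\OO'$, and split into two cases. If $f'_s\equiv 0$ on $\OO\setminus\rr$, then $V(f'_s)=\OO\setminus\rr$ is open, so $\partial V(f'_s)\subseteq\OO\cap\rr$; in particular $x_0\in\rr$ and $\s_{x_0}=\{x_0\}$. Since $|f|$ is locally constant on every sphere near $x_0$ (because $f'_s\equiv 0$), a sufficiently small circular neighbourhood $C$ of $x_0$ contained in the local-minimum neighbourhood realises $|f(x_0)|=\min_C|f|$, verifying hypothesis (a) of Theorem~\ref{thm:minimum}.

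The remaining and principal case is $f'_s\not\equiv 0$. Here the associator hypothesis together with $f\in\mc{SR}(\OO)\subseteq\mc{S}^\omega(\OO')$ puts us exactly in the situation of Remark~\ref{rmk:tfextends}: formula~\eqref{eq:transformationspecial}, namely $T_f(x)=f^c(x)^{-1}xf^c(x)$, extends $T_f$ to a real analytic diffeomorphism of $\OO'$ onto itself. Because this extension sends each sphere $\s_y$ bijectively to itself, the preimage $w_0:=T_f^{-1}(x_0)$ belongs to $\s_{x_0}$. Taking $H=\OO'$, the extended $T_f:H\to\OO'$ is continuous at $w_0$ with $T_f(w_0)=x_0$, so hypothesis (b) of Theorem~\ref{thm:minimum} is fulfilled. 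Applying Theorem~\ref{thm:minimum} in either case yields the conclusion, and both the slice domain and product domain dichotomies are inherited verbatim from that theorem.

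The only delicate points are the verification that $\s_{x_0}$ is free of zeros of $f$ (handled by Lemma~\ref{lemma:maxmin}) and the isolation of the degenerate case $f'_s\equiv 0$, which Remark~\ref{rmk:tfextends} excludes but which is harmless because the sphere collapses to a point. Once these are cleared, the proof is essentially the observation that the associator vanishing is precisely what promotes $T_f$ from a self-map of $\OO''$ to a self-map of the larger set $\OO'$, so that no boundary point of $V(f'_s)$ remains unreachable by hypothesis (b).
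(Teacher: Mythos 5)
Your proof is correct and follows the same route the paper intends: the corollary is stated immediately after the sentence ``Remark~\ref{rmk:tfextends} allows us to draw the following consequence,'' i.e.\ the associator hypothesis extends $T_f$ to all of $\OO'$ so that hypothesis \emph{(b)} of Theorem~\ref{thm:minimum} is automatically available, which is exactly your argument. Your extra care in checking $\s_{x_0}\cap V(N(f))=\emptyset$ via Lemma~\ref{lemma:maxmin} and in isolating the degenerate case $f'_s\equiv 0$ (where hypothesis \emph{(a)} applies instead) is sound and fills in details the paper leaves implicit.
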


In the octonionic setting, when $x_0$ is a boundary point of $V(f'_s)$ but neither of the assumptions {\it (a)} and {\it (b)} of theorem~\ref{thm:minimum} holds, it may well happen that no $y_0\in\s_{x_0}$ is an interior point of the set $W\cup K$ considered in the proof.

\begin{example}
Consider again the octonionic polynomial $f(x)=-i+(x^2+1)(j+x\ell)$ of Example~\ref{ex:tfdoesnotextend}. We already saw that the zero set of $f'_s(\alpha+\beta I)=2 \alpha j + (3\alpha^2-\beta^2+1)\ell$ is $\s$. Moreover,
\begin{align*}
\vs f(\alpha+\beta I)\ &=\ -i+\vs\Delta(\alpha+\beta I) (j+\alpha\ell) -\beta^2\Delta'_s(\alpha+\beta I)\ell\\
&=-i+(\alpha^2-\beta^2+1) j+ (\alpha^3-3\alpha\beta^2+\alpha)\ell\,,
\end{align*}
whence the condition $\vs f(\alpha+\beta I)f'_s(\alpha+\beta I)^c\in\rr$ is only satisfied at $\s$.

For each $I\in\s$, we saw in Example~\ref{ex:tfdoesnotextend} that both $(i((Ii)j))j$ and $(i((Ii)\ell))\ell$ are limit points of $T_f$ at $I$. Now let $U:=B(\ell,1/2)$ and $W:=T_f^{-1}(U\setminus\s)$. We will prove by contradiction that, for all $I\in\s$, the set $W\cup\s$ is not a neighbourhood of $I$.

If $W\cup\s$ were a neighbourhood of $I$, we would have $(i((Ii)j))j, (i((Ii)\ell))\ell \in U$, whence
\[(i((Ii)j))j = \ell+\gamma,\quad (i((Ii)\ell))\ell = \ell+\delta\]
for some $\gamma,\delta \in B(0,1/2)$. By direct computation, this would imply
\[I = \ell-((i(\gamma j))j)i,\quad I = -\ell+((i(\delta\ell))\ell)i\,,\]
whence the contradiction $I \in B(\ell,1/2) \cap B(-\ell,1/2)=\emptyset$.
\end{example}

Despite such pathological phenomena, theorem~\ref{thm:minimum} allows to establish the next result.

\begin{theorem}[Open mapping theorem]\label{thm:open}
Let $f\in\mc{SR}(\OO)$.
\begin{itemize}
\item If $\OO$ is a slice domain and $f$ is not constant, then $f_{|_{\OO\setminus \overline{V(f'_s)}}}$ is an open map. Moreover, the image through $f$ of any circular open subset $U\subseteq\OO$ is open. In particular, $f(\OO)$ is open.
\item If the fibers $f^{-1}(y_0)$ are discrete for all $y_0\in f(\OO)$, then $f$ is an open map.
\end{itemize}
\end{theorem}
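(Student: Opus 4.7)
The plan is to mimic the classical open-mapping argument from complex analysis, using Theorem~\ref{thm:minimum} as the crucial rigidity tool. The common setup is as follows: given $x_0 \in \OO$ with $y_0 := f(x_0)$, I will choose a compact neighborhood $\overline{N}\subseteq \OO$ of $x_0$ on whose boundary $f-y_0$ does not vanish; I set $\epsilon := \min_{\partial\overline{N}}|f-y_0| > 0$ and aim to show that $B(y_0,\epsilon/2)\subseteq f(\mathrm{int}(\overline{N}))$. For any $y \in B(y_0,\epsilon/2)$, if $h_y := f - y$ had no zero in $\overline{N}$, then $|h_y|$ would attain its positive minimum on $\overline{N}$ at an interior point $x_*$, with $|h_y(x_*)| < \epsilon/2 \leq \min_{\partial\overline{N}}|h_y|$; Theorem~\ref{thm:minimum} applied to $h_y$ at $x_*$ would then force $h_y$ to be constant on a slice or half-slice of the connected component of $\OO$ containing $x_*$, which is the sought contradiction.

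For the first bullet, Proposition~\ref{SRnonsingular} guarantees $N(f-y_0)\not\equiv 0$, so Theorem~\ref{structure_zeros} combined with Theorem~\ref{zerosonsphere} yields that $V(f-y_0)$ is a union of isolated points and isolated spheres, with $\s_{x_0}\cap V(f-y_0)$ equal to $\{x_0\}$ or all of $\s_{x_0}$. For the sub-statement about $\OO\setminus \overline{V(f'_s)}$, I take $\overline{N}$ to be a closed Euclidean ball $\overline{B(x_0,r)}\subseteq \OO\setminus \overline{V(f'_s)}$; the case $\s_{x_0}\subseteq V(f-y_0)$ is ruled out since it would imply $f'_s(x_0)=0$ through the identity $f-y_0 = \vs{(f-y_0)} + \im\cdot f'_s$, contradicting $x_0\notin V(f'_s)$; so $x_0$ is isolated in $V(f-y_0)$ and such a ball exists. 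Since $x_*$ then also lies outside $\overline{V(f'_s)}$, Theorem~\ref{thm:minimum} applies with no extra hypothesis. For the sub-statement about a circular open $U\subseteq \OO$, I instead pick a closed circular neighborhood $\overline{C}\subseteq U$ of $\s_{x_0}$ with $V(f-y_0)\cap \overline{C}$ equal to $\{x_0\}$ or to $\s_{x_0}$; if the minimum point $x_*$ should land in $\partial V(f'_s)$, the interior $\mathrm{int}(\overline{C})$ is itself a circular open neighborhood of $x_*$ on which $|h_y|$ attains its minimum at $x_*$, thereby supplying hypothesis $(a)$ of Theorem~\ref{thm:minimum}.

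For the second bullet, the key preliminary observation is that the discreteness of every fiber forces $V(f'_s)\subseteq \rr$: if $f'_s(x_1)=0$ for some $x_1\in\OO\setminus\rr$, then $f = \vs f + \im\cdot f'_s$ makes $f$ constant on the positive-dimensional sphere $\s_{x_1}$, so $f^{-1}(f(x_1))\supseteq \s_{x_1}$ is not discrete. Consequently $\overline{V(f'_s)}\subseteq \rr$, and in particular $\partial V(f'_s)\subseteq\rr$. Given any open $U\subseteq \OO$ and $x_0\in U$ with $y_0 = f(x_0)$, discreteness of $f^{-1}(y_0)$ allows me to pick $\overline{B(x_0,r)}\subseteq U$ with $\overline{B(x_0,r)}\cap f^{-1}(y_0)=\{x_0\}$, and I run the common setup with $\overline{N} = \overline{B(x_0,r)}$. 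If $x_*\in\partial V(f'_s)$ then $x_* \in \rr$, so $\s_{x_*}=\{x_*\}$ and any sufficiently small Euclidean ball around $x_*$ is itself circular and supplies hypothesis $(a)$; otherwise $x_*\notin \overline{V(f'_s)}$ and no extra hypothesis is needed. The resulting constancy of $h_y$ on a slice domain, or on a half-slice $\OO_J^+$ of a product domain, contradicts either the boundary bound $\min_{\partial\overline{N}}|h_y|\geq \epsilon/2 > |h_y(x_*)|$ in the first bullet, or the discreteness of $f^{-1}(y)$ in the second.

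The main obstacle I expect is verifying hypothesis $(a)$ of Theorem~\ref{thm:minimum} in the delicate case $x_*\in \partial V(f'_s)$. The plan addresses this in the first bullet by choosing the initial neighborhood $\overline{C}$ to be circular from the outset, and in the second bullet through the preliminary observation that discrete fibers confine $\overline{V(f'_s)}$ to the real axis, where every ordinary Euclidean ball is automatically circular.
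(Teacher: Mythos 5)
Your proof is correct and follows essentially the same route as the paper's: a compact neighbourhood (a closed Euclidean ball, or a closed circular tube around $\s_{x_0}$ in the circular case) on whose boundary $|f-y_0|$ is bounded below, followed by an application of the minimum modulus principle (theorem~\ref{thm:minimum}) to $f-y$ at the interior minimum point, the resulting constancy contradicting either non-constancy of $f$ or discreteness of the fibers. The only difference is that you spell out the verification of hypothesis \emph{(a)} of theorem~\ref{thm:minimum} when the minimum point falls on $\partial V(f'_s)$ --- via the circularity of the tube in the first bullet, and via the observation that discrete fibers force $V(f'_s)\subseteq\rr$ in the second --- a point the paper leaves implicit.
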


\begin{proof}
We first deal with the case when $\OO$ is a slice domain and $f$ is not constant. Let $U$ be an open subset of $\OO\setminus\overline{V(f'_s)}$ or a circular open subset of $\OO$. Let us prove that $f(U)$ is open; that is, for each $y_0=f(x_0)$ with $x_0\in U$, let us find a radius $\varepsilon>0$ such that the Euclidean ball $B(y_0,\varepsilon)$ is contained in $f(U)$.
\begin{itemize}
\item If $U\subseteq\OO\setminus\overline{V(f'_s)}$, then the point $x_0$ must be an isolated zero for the function $g(x):=f(x)-y_0$ in $U$. Thus, there exists a closed Euclidean ball $K:=\overline{B(x_0,r)}\subseteq U$ such that $g$ never vanishes in $K\setminus\{x_0\}$.
\item Suppose $U\not\subseteq\OO\setminus\overline{V(f'_s)}$ but $U$ is circular. For an appropriate $R>0$ we have that $K:=\{x \in \oo\ |\ \mathrm{dist}(x,\s_{x_0})\leq R\}\subseteq U$ and that $g$ never vanishes in $K\setminus\s_{x_0}$.
\end{itemize}
We claim that $\varepsilon:=\frac{1}{3}\min_{\partial K}|g|$ is the desired radius. Indeed, for all $y\in B(y_0,\varepsilon)$ and for all $x\in\partial K$ the inequality $3\varepsilon\leq |g(x)|$ implies
\[|f(x_0)-y| = |y_0-y| < \varepsilon < 2\varepsilon \leq |g(x)| - |y_0-y| \leq |f(x)-y|\,.\]
Thus, $|f(x)-y|$ admits a minimum (whence a zero by theorem~\ref{thm:minimum}) at an interior point of $K$. As a consequence, $y\in f(K)\subseteq f(U)$, as desired.

Secondly, let us deal with the case when no assumption is taken on the open domain $\OO$, but the fibers $f^{-1}(y_0)$ are assumed to be discrete for all $y_0\in f(\OO)$. It suffices to prove that $f_{|_{\OO_0}}$ is open for each connected component $\OO_0$ of $\OO$. As explained in Section~\ref{sec:preliminaries}, $\OO_0$ is either a slice domain or a product domain. Moreover, the discreteness of the fibers of $f$ guarantees that $f$ is not constant in $\OO_0$, nor on any half-slice of $\OO_0$. For any open subset $U$ of $\OO_0$ and for each $y_0=f(x_0)$ with $x_0\in U$, the point $x_0$ must be an isolated zero for the function $g(x):=f(x)-y_0$ in $U$. As in the previous case, there exists a closed Euclidean ball $K:=\overline{B(x_0,r)}\subseteq U$ such that $g$ never vanishes in $K\setminus\{x_0\}$ and we can prove along the same lines that $f(U)$ includes $B(y_0,\varepsilon)$ with $\varepsilon:=\frac{1}{3}\min_{\partial K}|g|$.
\end{proof}

For quaternions, related results had been proven in~\cite{advances,open,zerosopen,altavillawithoutreal} and more will appear in~\cite{gporientation}. For octonions, the recent work~\cite{wang} had considered the case of circular open subsets of a slice domain.

We point out that, in the quaternionic and octonionic cases, restricting $f$ to $\OO\setminus \overline{V(f'_s)}$ (or supposing the fibers are discrete) is essential in order to have an open map.

\begin{example}
The slice regular polynomial $f(x)=x^2$ has $f'_s(\alpha+\beta J)=2\alpha$ for all $\alpha,\beta\in\rr$ with $\beta\neq0$ and $J\in\s$. Thus, $f'_s$ (extended to $\oo$) has $\im\oo$ as its zero set. Consider the imaginary unit $i\in V(f'_s)$ and notice that it has distance $1$ from $\cc_J$ for all $J\in\s$ orthogonal to it, e.g., $J=k$. Thus, the Euclidean ball $B(i,1)$ does not intersect $\cc_k$. Because $f$ is slice preserving, $f(B(i,1))$ does not intersect $\cc_k\setminus\rr$. As a consequence, $f(i)=-1$ is not an interior point of $f(B(i,1))$ and $f(B(i,1))$ is not open in $\oo$.
\end{example}

%%%%%%%%%%%%%%%%%%%%%%%

\section{Singularities of slice regular functions}\label{sec:singularities}

In this section, we first recall from~\cite{gpssingularities} the construction of Laurent-type expansions and the related classification of singularities as removable, essential or as poles. We then state a characterization of each type of singularity and prove an analog of the Casorati-Weierstrass theorem for essential singularities. Finally, we study the algebra of semiregular functions, namely functions without essential singularities, proving that it is a division algebra when the domain is a slice domain. These results are new in the octonionic case. Over the quaternions, part of the characterization of essential singularities is new, while the other results had been proven in~\cite{poli,zerosopen}.

Two distinct Laurent-type expansions have been presented in~\cite{gpssingularities} for slice regular functions on an alternative $^*$-algebra. Throughout this section, we continue to focus on an algebra $A=\cc,\hh$ or $\oo$. In this situation, the sets of convergence of these expansions are balls, or shells,
\begin{align*}
\Sigma(y,R) &:= \{x \in A \, | \, \sigma(x,y)<R\},\\
\Sigma(y,R_1,R_2)&:=\{x \in A \, | \, \tau(x,y)>R_1, \sigma(x,y) <R_2\},\\
\Sto(y,r)&:=\{x \in A \, | \, \sto(x,y)<r\},\\
\Sto(y,r_1,r_2)&:=\{x \in A \, | \, r_1<\sto(x,y)<r_2\}.
\end{align*}
with respect to the distance $\sigma$ and the pseudodistances $\tau,\sto$ defined as follows on $A$:
\begin{eqnarray}
&\sigma(x,y):=& \left\{
\begin{array}{ll}
|x-y|  \mathrm{\ if\ } x,y \mathrm{\ lie\ in\ the\ same\ } \cc_J \vspace{.3em}\\
\sqrt{\left(\mr{Re}(x)-\mr{Re}(y)\right)^2 + \left(|\mr{Im}(x)| + |\mr{Im}(y)|\right)^2}  \mathrm{\ otherwise}
\end{array}
\right. \ ,\\
&& \nonumber\\
&\tau(x,y):=& \left\{
\begin{array}{l}
|x-y|  \mathrm{\ if\ } x,y \mathrm{\ lie\ in\ the\ same\ } \cc_J \vspace{.3em}\\
\sqrt{\left(\mr{Re}(x)-\mr{Re}(y)\right)^2 + \left(|\mr{Im}(x)| - |\mr{Im}(y)|\right)^2}   \mathrm{\ otherwise}
\end{array}
\right. \ ,\\
&& \nonumber\\
&\sto(x,y):=&\sqrt{|\Delta_y(x)|}
\ .
\end{eqnarray}

The expansions are based on functions such as those mentioned in Examples~\ref{ex:binomial} and~\ref{ex:DeltaInv}. The first result is~\cite[theorem 4.9]{gpssingularities}:

\begin{theorem}\label{Laurent}
Consider a slice regular function $f\in\mc{SR}(\OO)$. Suppose that $y \in A$ and $R_1,R_2 \in [0,+\infty]$ are such that $R_1<R_2$ and $\Sigma(y,R_1,R_2) \subseteq \OO$. Then there exists a (unique) sequence $\{a_n\}_{n \in \zz}$ in $A$ such that
\begin{equation}\label{laurentformula}
f(x) = \sum_{n \in \zz} (x-y)^{\punto n}\cdot a_n
\end{equation}
in $\Sigma(y,R_1,R_2)$. If, moreover, $\Sigma(y,R_2)\subseteq \OO$, then for all $n<0$ we have $a_n = 0$ and formula~\eqref{laurentformula} holds in $\Sigma(y,R_2)$.
\end{theorem}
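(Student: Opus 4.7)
The plan is to reduce the statement to the classical complex Laurent expansion on a single slice and then propagate both the coefficients and the convergence estimates to the full shell using the slice structure. First, I would fix $J\in\s$ so that $y\in\cc_J$, which is always possible because $A=\bigcup_{I\in\s}\cc_I$. On this slice the distances simplify dramatically: for $z\in\cc_J$ one has $\sigma(z,y)=\tau(z,y)=|z-y|$, so $\Sigma(y,R_1,R_2)\cap\cc_J$ is the classical annulus $\{z\in\cc_J:R_1<|z-y|<R_2\}$. Moreover, since $\cc_J$ is commutative and associative, the slice powers $(x-y)^{\punto n}$ restrict on $\cc_J$ to ordinary complex powers $(x-y)^n$, so no confusion can arise between the slice product and the pointwise product there.

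Next, I would apply the splitting lemma (Lemma~\ref{splitting}) to write $f|_{\OO\cap\cc_J}=\sum_{k=0}^3 f_k J_k$ with holomorphic components $f_k:\OO\cap\cc_J\to\cc_J$ (where $J_0:=1$). Each $f_k$ admits a unique classical Laurent expansion $f_k(z)=\sum_{n\in\zz}(z-y)^n a_{n,k}$ with $a_{n,k}\in\cc_J$, convergent on the annulus. Setting $a_n:=\sum_{k=0}^3 a_{n,k}J_k\in A$ yields the identity $f(z)=\sum_{n\in\zz}(z-y)^{\punto n}a_n$ on $\Sigma(y,R_1,R_2)\cap\cc_J$, with uniqueness of $\{a_n\}_{n\in\zz}$ inherited from the uniqueness of the classical Laurent expansion of each $f_k$.

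To extend the identity to the full shell I would prove uniform bounds of the form $|(x-y)^{\punto n}|\leq \sigma(x,y)^n$ for $n\geq 0$ and $|(x-y)^{\punto n}|\leq \tau(x,y)^{n}$ for $n<0$, which will imply normal convergence of the two halves of the series on every compact subset of $\Sigma(y,R_1,R_2)$. For $n\geq 0$ such bounds follow by induction via a splitting-basis estimate on the slice containing $x$. For $n<0$ one starts from $(x-y)^{\punto (-1)}=\Delta_y(x)^{-1}(x-y^c)$ as in Example~\ref{ex:DeltaInv}, uses $|\Delta_y(x)|=\sto(x,y)^2\geq\tau(x,y)^2$ together with the comparison between $|x-y^c|$ and $\sigma(x,y)$, and iterates thanks to the relation $N((x-y)^{\punto n})=\Delta_y^n$ and Theorem~\ref{thm:tame}. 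The sum of the resulting normally convergent series is then slice regular on $\Sigma(y,R_1,R_2)$ and agrees with $f$ on $\cc_J$; by the identity principle for slice regular functions on each connected component (a slice domain or a product domain) it equals $f$ throughout the shell.

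For the last assertion, if $\Sigma(y,R_2)\subseteq\OO$ then each $f_k$ extends holomorphically to the full disk $\{z\in\cc_J:|z-y|<R_2\}$, and the classical removable singularity theorem forces $a_{n,k}=0$ for $n<0$, hence $a_n=0$ for $n<0$; the remaining power series converges throughout $\Sigma(y,R_2)$ by the $\sigma$-estimate. The main technical obstacle I foresee is the norm bound on $(x-y)^{\punto n}$ for negative $n$: the factors of the slice product need not commute when $x\notin\cc_J$, so one has to exploit the tameness of $(x-y)^{\punto n}$, together with the fact that its normal function $\Delta_y^n$ is slice preserving, in order to reduce the bound to the purely geometric inequalities relating $|x-y|$, $|x-y^c|$, $\sigma(x,y)$ and $\tau(x,y)$.
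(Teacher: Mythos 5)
The paper does not actually prove Theorem~\ref{Laurent}: it is imported verbatim from the reference \cite{gpssingularities} (as Theorem 4.9 there), so there is no internal argument to compare yours against. Your proposal reconstructs essentially the standard proof of that result: apply the splitting lemma on the slice $\cc_J$ containing $y$, take the classical Laurent expansions of the holomorphic components, reassemble the coefficients, and propagate the identity from the annulus in $\cc_J$ to the whole shell via norm estimates on $(x-y)^{\punto n}$ and the representation formula. The architecture is correct, and the ingredients you invoke are all available in the paper (Lemma~\ref{splitting}, Example~\ref{ex:DeltaInv}, Theorem~\ref{thm:tame}, the $^*$-Artin theorem, which you do need to justify that $\left((x-y)^{\punto n}\cdot a_n\right)(z)=(z-y)^n a_n$ for $z\in\cc_J$).

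Two steps of the sketch need repair, though neither is fatal. First, the inequality $\sto(x,y)\geq\tau(x,y)$ is false as stated: for $x\in\cc_J$ one has $\sto(x,y)^2=|x-y|\,|x-y^c|$ while $\tau(x,y)=|x-y|$, so near $y^c$ the left-hand side is much smaller than $\tau(x,y)^2$. The bound $|(x-y)^{\punto (-m)}(x)|\leq C\,\tau(x,y)^{-m}$ is nevertheless true, but you must argue it in two cases: for $x\notin\cc_J$ use the exact factorization of $|\Delta_y(x)|$ as the product of the two branch values $\sqrt{(\re x-\re y)^2+(|\im x|\mp|\im y|)^2}$, so that the larger factor cancels against the estimate for $|(x-y^c)^{\punto m}(x)|$; for $x\in\cc_J$ the slice powers are ordinary complex powers and the estimate is immediate. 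Second, $\Sigma(y,R_1,R_2)$ is \emph{not} circular (the defining conditions are weaker on $\cc_J$ than elsewhere), so ``agrees on $\cc_J$, hence everywhere by the identity principle'' needs one more line: write the shell as the union of a circular set $\OO_E$ with a portion of the annulus in $\cc_J$, check that $\OO_E\cap\cc_J$ is contained in that annulus (this uses $\tau\leq\sigma$ on the circular part), propagate the equality to $\OO_E$ by the representation formula, and observe that the remaining points already lie in $\cc_J$. With these adjustments, and the analogous classical argument for the case $\Sigma(y,R_2)\subseteq\OO$, the proof is complete.
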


The second result is a consequence of~\cite[remark 7.4]{gpssingularities}:

\begin{theorem} \label{Laurent-spherical-expansion} 
Let $f\in\mc{SR}(\OO)$, let $y \in \cc_J\subseteq A$ and let $r_1,r_2 \in [0,+\infty]$ with $r_1 < r_2$ such that $\Sto(y,r_1,r_2) \subseteq \OO$. Then
\begin{equation} \label{eq:spherical}
f(x)=\sum_{k \in \zz}\Delta_y^k(x)(xu_k+v_k)
\end{equation}
for all $x \in \Sto(y,r_1,r_2)$, where
\begin{eqnarray}
&u_k=(2\pi J)^{-1}\int_{\partial \Sto_J(y,r)}d\zeta \, \Delta_y(\zeta)^{-k-1} f(\zeta)\,,\\
&v_k=(2\pi J)^{-1}\int_{\partial \Sto_J(y,r)}d\zeta \, (\zeta-2\re(y))\, \Delta_y(\zeta)^{-k-1} f(\zeta).
\end{eqnarray}
If, moreover, $\Sto(y,r_2)\subseteq \OO$, then for all $k<0$ we have $u_k=0=v_k$ and formula~\eqref{eq:spherical} holds in $\Sto(y,r_2)$.
\end{theorem}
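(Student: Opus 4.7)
My plan is to invoke the general spherical Laurent expansion from the companion paper~\cite[remark 7.4]{gpssingularities}---which already produces a unique expansion of the form~\eqref{eq:spherical} on the circular shell $\Sto(y,r_1,r_2)$, normally convergent on compact subshells---and then to use Cauchy-type integration along the slice $\cc_J$ to identify the coefficients $u_k,v_k$ and, in the ball case, to force them to vanish for $k<0$. Existence, uniqueness and convergence of the series are thus not in question; only the two additional claims are.

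To derive the integral formulas I would restrict the expansion to $\cc_J$ and, for any fixed $r\in(r_1,r_2)$, substitute it into the right-hand sides of the proposed formulas for $u_k$ and $v_k$. Since $d\zeta$, $\zeta$ and $\Delta_y(\zeta)$ all lie in the commutative subalgebra $\cc_J$, the $^*$-Artin theorem lets me interchange the (uniformly convergent on the compact contour) sum with the integral and pull the constants $u_m,v_m\in A$ out on the right. The problem then collapses to the pair of $\cc_J$-valued identities
\[\int_{\partial\Sto_J(y,r)}\!d\zeta\,\Delta_y(\zeta)^{p}=0\quad(p\in\zz),\qquad \int_{\partial\Sto_J(y,r)}\!d\zeta\,\zeta\,\Delta_y(\zeta)^{p}=2\pi J\,\delta_{p,-1},\]
together with their analogues weighted by $(\zeta-2\re(y))$; the latter reduce to the former via the algebraic identity $\zeta(\zeta-t(y))=\Delta_y(\zeta)-n(y)$. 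The case $p\ge 0$ is Cauchy's theorem. For $p\le-1$, the orientation-preserving affine involution $\zeta\mapsto t(y)-\zeta$ of $\cc_J$ leaves the Cassini contour $\partial\Sto_J(y,r)$ invariant, swaps $y$ with $y^c$, and preserves $\Delta_y$, so the first identity is immediate, while the nonzero value $2\pi J$ in the second (at $p=-1$) is given by the standard partial-fraction decomposition of $\zeta/\Delta_y$ at the two conjugate poles.

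For the last statement, assume $\Sto(y,r_2)\subseteq\OO$ and fix $k<0$. Then $-k-1\ge0$, so the integrands $\Delta_y^{-k-1}f$ and $(\zeta-2\re(y))\Delta_y^{-k-1}f$ are regular on a neighborhood of the closure of $\Sto_J(y,r)$. Decomposing $f_{|_{\OO_J}}$ into $\cc_J$-valued holomorphic components via the splitting lemma~\ref{splitting} and applying Cauchy's theorem component-wise forces both integrals to vanish, whence $u_k=v_k=0$. The expansion then collapses to a nonnegative-power series which, by the uniqueness provided by~\cite[remark 7.4]{gpssingularities}, represents $f$ on all of $\Sto(y,r_2)$.

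The step I expect to be the main obstacle is the residue cancellation at $p\le-2$ in the second identity above: the reflection symmetry alone does not kill $\int\zeta\,\Delta_y^{p}\,d\zeta$, because the factor $\zeta$ breaks the required antisymmetry. I plan to address this by a direct Laurent-coefficient calculation at the two poles $y,y^c$ in the commutative slice $\cc_J$, using that $\zeta=y+(\zeta-y)$ near $y$ and $\zeta=y^c+(\zeta-y^c)$ near $y^c$ and that the resulting residue pairs differ only by the interchange $y\leftrightarrow y^c$, which produces the clean cancellations already visible at the lowest orders $p=-1,-2$.
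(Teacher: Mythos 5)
Your proposal is correct, and it takes a genuinely different route from the paper for the simple reason that the paper gives no proof at all: Theorem~\ref{Laurent-spherical-expansion} is introduced only with the sentence that it ``is a consequence of \cite[remark 7.4]{gpssingularities}'', so existence, convergence, the integral formulas for $u_k,v_k$ and their vanishing for $k<0$ are all delegated to the companion paper. You delegate only existence, uniqueness and normal convergence, and then verify the coefficient formulas by substituting the expansion into the integrals; this is legitimate and more self-contained. Your reduction to the slice identities is right: $\int_{\partial \Sto_J(y,r)}\Delta_y(\zeta)^p\,d\zeta=0$ for every $p\in\zz$ does follow from the orientation-preserving involution $\zeta\mapsto t(y)-\zeta$ of the Cassini contour, the value $2\pi J$ at $p=-1$ is the residue computation you describe, and the $v_k$-identities reduce to the $u_k$-ones via $\zeta(\zeta-t(y))=\Delta_y(\zeta)-n(y)$. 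For the step you flag as the main obstacle, namely $\int\zeta\,\Delta_y(\zeta)^p\,d\zeta$ with $p\le-2$, your pole-by-pole cancellation does work, but there is a cleaner argument that avoids residues altogether: write $\zeta=\tfrac{t(y)}{2}+\tfrac12\Delta_y'(\zeta)$, so that for $p\neq-1$ the second summand contributes the exact differential $\tfrac{1}{2(p+1)}\,d\bigl(\Delta_y(\zeta)^{p+1}\bigr)$ of a single-valued rational function, whose integral over the closed contour vanishes, while the first summand vanishes by the involution argument. Two small points to tidy: when $y\in\rr$ the two poles coalesce ($\Delta_y(\zeta)=(\zeta-y)^2$), so the ``partial fractions at the two conjugate poles'' must be replaced by the direct computation $\int(\zeta-y)^{2p+1}\,d\zeta=2\pi J\,\delta_{p,-1}$, which still yields the claim; and when pulling $u_m,v_m$ out of the integrals on the right you should note that only the two generators $J$ and $u_m$ (resp.\ $v_m$) occur at a time, so the $^*$-Artin theorem applies as you intend. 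With these touches the argument is complete.
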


The previous results allow to adopt the following terminology.

\begin{definition}\label{def:Laurent-order}
Consider a slice regular function $\mc{SR}(\OO)$. A point $y$ is a \emph{singularity} for $f$ if there exists $R>0$ such that $\Sigma(y,0,R) \subseteq \OO$, so that theorem~\ref{Laurent} and theorem~\ref{Laurent-spherical-expansion} hold with inner radii of convergence $R_1=0=r_1$ and positive outer radii of convergence $R_2,r_2$.

In the notations of theorem~\ref{Laurent}, the point $y$ is said to be a \emph{pole} for $f$ if there exists an $m\geq0$ such that $a_{n} = 0$ for all $n<-m$; the minimum such $m$ is called the \emph{order} of the pole and denoted as $\ord_f(y)$. If $y$ is not a pole, then it is called an \emph{essential singularity} for $f$ and $\ord_f(y) := +\infty$.

In the notations of theorem~\ref{Laurent-spherical-expansion}, the \emph{spherical order} of $f$ at $\s_y$ is the smallest even natural number $2k_0$ such that $u_k=0=v_k$ for all $k<-k_0$. If no such $k_0$ exists, then we set $\ord_f(\s_y) := +\infty$.

Finally, $y$ is called a \emph{removable singularity} if $f$ extends to a slice regular function in $\mc{SR}(\widetilde \OO)$, where $\widetilde \OO$ is a circular open set containing $y$.
\end{definition}

Singularities can be characterized as follows.

\begin{theorem}\label{thm:singularities}
Let $\widetilde{\OO}$ be a circular open set, let $y \in \widetilde{\OO}\setminus\rr$ and set $\OO:=\widetilde{\OO}\setminus \s_y$. If $f\in\mc{SR}(\OO)$ then one of the following assertions holds:
\begin{enumerate}
\item Every point of $\s_y$ is a removable singularity for $f$, i.e., $f$ extends to a slice regular function on $\widetilde{\OO}$. It holds $\ord_f(\s_y)=0$ and $\ord_f(w)=0$ for all $w \in \s_y$.
 \item Every point of $\s_y$ is a non removable pole for $f$. There exists $k \in \nn \setminus \{0\}$ such that the function defined on $\OO$ by the expression
\[x \mapsto \Delta_y^k(x)f(x)\]
extends to a slice regular function $g \in \mc{SR}(\widetilde{\OO})$ that has at most one zero in $\s_y$. It holds $\ord_f(\s_y)=2k$. Moreover, $\ord_f(w)=k$ and $\lim_{\OO \ni x \to w} |f(x)| = +\infty$ for all $w$ in $\s_y$ except the possible zero of $g$, which must have order less than $k$.
\item Every point $w\in\s_y$, except at most one, is an essential singularity, i.e., $\ord_f(w)=+\infty$. It holds $\ord_f(\s_y)=+\infty$. For all neighbourhoods $U$ of $\s_y$ in $\widetilde{\OO}$ and for all $k \in \nn$,
\[\sup_{x \in U \setminus \s_y}|\Delta_y^k(x)f(x)|=+\infty.\]
\end{enumerate}
In particular, to check which is the case it suffices to check whether $\ord_f(y),\ord_f(y^c)$ are both $0$, both finite (but not both $0$) or not both finite; or, equivalently, (if $J \in \s_A$ is such that $y \in \cc_J\setminus \rr$) whether the function defined on $\OO_J$ by the expression
\[z \mapsto (z-y)^k(z-y^c)^kf(z)\]
is bounded near $y$ and $y^c$ for $k=0$, for some finite $k$ or for no $k\in\nn$.
\end{theorem}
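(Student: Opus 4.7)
The plan is to derive the three alternatives directly from the spherical Laurent expansion of Theorem~\ref{Laurent-spherical-expansion} and then translate the resulting spherical order into the pointwise orders at $y$ and $y^c$. The setup: fix $r>0$ with $\Sto(y,0,r)\subseteq\OO$ and write
\[
f(x)=\sum_{k\in\zz}\Delta_y^k(x)(xu_k+v_k),\qquad x\in\Sto(y,0,r).
\]
Set $2k_0:=\ord_f(\s_y)\in 2\nn\cup\{+\infty\}$. By Definition~\ref{def:Laurent-order}, $u_k=v_k=0$ for $k<-k_0$ and, when $k_0$ is finite, minimality forces $(u_{-k_0},v_{-k_0})\neq(0,0)$. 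The three cases of the statement correspond to $k_0=0$, $0<k_0<+\infty$, and $k_0=+\infty$.

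If $k_0=0$ the series is of Taylor type, extends $f$ to $\mc{SR}(\widetilde\OO)$, and every $w\in\s_y$ is removable with $\ord_f(w)=0$. If $0<k_0<+\infty$, I set $g:=\Delta_y^{k_0}\cdot f$; since $\Delta_y$ is slice preserving the expansion rewrites as $g(x)=\sum_{j\ge 0}\Delta_y^j(x)(xu_{j-k_0}+v_{j-k_0})$, so $g$ extends to $\mc{SR}(\widetilde\OO)$. At each $w\in\s_y$ the vanishing $\Delta_y(w)=0$ collapses this to $g(w)=wu_{-k_0}+v_{-k_0}$. Two distinct zeros $w_1,w_2\in\s_y$ of $g$ would yield $(w_1-w_2)u_{-k_0}=0$, which in the division algebra $A$ forces $(u_{-k_0},v_{-k_0})=(0,0)$ and contradicts minimality; hence $V(g)\cap\s_y$ is empty or a singleton $\{w_0\}$.

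To read off the pointwise orders, I would use $\Delta_w=\Delta_y$ (Example~\ref{ex:Delta}) together with $g=\Delta_w^{k_0}\cdot f$. On the slice $\cc_J$ through any $w\in\s_y$, the factorization $\Delta_w=(x-w)(x-w^c)$ shows $\Delta_w$ has a zero of multiplicity exactly one at $w$, so $\Delta_w^{k_0}$ has multiplicity $k_0$; comparing Laurent orders at $w$ (where $g$ is slice regular) gives $\ord_f(w)=k_0-\mathrm{mult}_w(g)$. Hence $\ord_f(w)=k_0$ at every non-exceptional $w$ and, using slice-preservation of $\Delta_y^{-k_0}$, $|f(x)|=|\Delta_y(x)|^{-k_0}|g(x)|\to+\infty$ as $x\to w$ in $\OO$; at the exceptional $w_0$ the isolated multiplicity $m\ge 1$ yields $\ord_f(w_0)=k_0-m<k_0$. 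For Case~3 ($k_0=+\infty$), if $|\Delta_y^k\cdot f|$ were bounded on $U\setminus\s_y$ for some neighbourhood $U$ of $\s_y$ and some $k\in\nn$, a Riemann-type argument applied to the spherical coefficient formulas of Theorem~\ref{Laurent-spherical-expansion} (a bounded spherical Laurent series has only nonnegative $\Delta_y$-powers) would extend $\Delta_y^k\cdot f$ regularly across $\s_y$ and force $\ord_f(\s_y)\le 2k$, a contradiction. The ``at most one exception'' clause follows by applying the Case~2 analysis to each hypothetical finite-order $w\in\s_y$: two distinct such points would make $\Delta_y^k\cdot f$ bounded near both for $k\ge\max_i\ord_f(w_i)$, and the Cauchy integrals for $u_n,v_n$ in Theorem~\ref{Laurent-spherical-expansion} then force $k_0<+\infty$.

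The final pointwise characterization is now a bookkeeping consequence. Case 1 occurs precisely when $\ord_f(y)=\ord_f(y^c)=0$ (pointwise orders are bounded by $k_0$, and conversely finiteness of both excludes Case~3 by the above, then $k_0=\max(\ord_f(y),\ord_f(y^c))=0$). Case 2 occurs precisely when both $\ord_f(y),\ord_f(y^c)$ are finite but not both zero, because $g(y)$ and $g(y^c)$ cannot both vanish, so $\max(\ord_f(y),\ord_f(y^c))=k_0\ge 1$. Case 3 occurs precisely when at least one of $\ord_f(y),\ord_f(y^c)$ is $+\infty$. The reformulation in terms of boundedness of $(z-y)^k(z-y^c)^kf(z)$ on $\OO_J$ is immediate from Definition~\ref{def:Laurent-order} together with the fact that $(x-y)^{\punto k}\cdot(x-y^c)^{\punto k}=\Delta_y^k$ is slice preserving. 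The main obstacle I expect is the last assertion of Case~3: extracting the vanishing of sufficiently negative spherical coefficients from local boundedness at two points $w_1,w_2\in\s_y$ requires a careful manipulation of the Cauchy integrals of Theorem~\ref{Laurent-spherical-expansion} paired with the factorizations $\Delta_y=(x-w_i)\cdot(x-w_i^c)$; once this is handled, the rest of the argument is arithmetic on the spherical expansion.
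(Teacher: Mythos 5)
Your overall architecture --- deriving the trichotomy directly from the spherical expansion of Theorem~\ref{Laurent-spherical-expansion} rather than quoting the general classification that the paper imports from the reference \cite{gpssingularities} --- is viable, and several pieces are done correctly: in particular, the identity $g(w)=wu_{-k_0}+v_{-k_0}$ on $\s_y$ together with the ``two distinct zeros force $(u_{-k_0},v_{-k_0})=(0,0)$'' argument is exactly the content the paper extracts from theorem~\ref{zerosonsphere}. But there is one genuine gap, which you yourself flag as the main obstacle: the passage from finiteness of the pointwise order at \emph{two} distinct points $w_1,w_2\in\s_y$ to finiteness of the spherical order $\ord_f(\s_y)$. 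This step carries the whole weight of the ``at most one exception'' clause in case~3 \emph{and} of the final criterion reducing everything to $\ord_f(y)$ and $\ord_f(y^c)$, so without it the practically useful part of the theorem is unproven. The missing idea is not a manipulation of the Cauchy integrals paired with factorizations of $\Delta_y$, but the representation formula: finiteness of $\ord_f(w_i)$ gives boundedness of $\Delta_y^m\cdot f$ on punctured discs around $w_1$ and $w_2$ inside their respective slices; since the restriction of a slice function to any sphere $\s_x$ is an affine function of the imaginary unit, control at two points of each nearby sphere (with the two units uniformly separated, because $w_1\neq w_2$) propagates to a bound on a full punctured circular neighbourhood of $\s_y$; only then do the shrinking-contour estimates on $u_k,v_k$ over $\partial \Sto_J(y,r)$ kill the negative-index coefficients. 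This is precisely what the paper's proof cites as \cite[lemma 10.7]{gpssingularities}.

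A second, smaller gap: your formula $\ord_f(w)=k_0-\mathrm{mult}_w(g)$ is asserted by ``comparing Laurent orders'' on the slice through $w$, but $\ord_f(w)$ is defined via the noncommutative expansion in powers $(x-w)^{\punto n}$ of Theorem~\ref{Laurent}, and the factor $(x-w^c)^{-\punto k_0}=\Delta_y^{-k_0}(x-w)^{\punto k_0}$ is \emph{not} slice regular on any circular neighbourhood of $w$ (its domain excludes all of $\s_y$). One must actually check that its $\sigma$-Laurent expansion at $w$ has no negative-index terms, i.e.\ that its pointwise order at $w$ is $0$, before orders can be added; this is standard but not automatic, and it is also what makes the points of case~2 ``non removable'' poles even when $\ord_f(w_0)=0$. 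The remainder of your outline (cases 1 and 2, the uniqueness of the zero of $g$, the concluding bookkeeping) is sound, and modulo the two points above it would constitute a self-contained alternative to the paper's citation-based proof.
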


\begin{proof}
We can apply~\cite[theorem 9.4]{gpssingularities} to $f$. Moreover, according to theorem~\ref{zerosonsphere}, the function $g$ appearing in case {\it 2} can have at most one zero. Finally, from the same theorem and from~\cite[lemma 10.7]{gpssingularities} it follows that in case {\it 3} there can be at most one point in $\s_y$ that is not an essential singularity for $f$.
\end{proof}

A similar characterization is available for real singularities, see~\cite[theorem 9.5]{gpssingularities}. It is completely analogous to the complex case.

We can now consider the analogs of meromorphic functions.

\begin{definition}\label{def:semiregular}
A function $f$ is \emph{(slice) semiregular} in a (nonempty) circular open set $\widetilde{\OO}$ if there exists a circular open subset $\OO$ of $\widetilde{\OO}$ such that $f \in \mc{SR}(\OO)$ and such that every point of $\widetilde{\OO} \setminus \OO$ is a pole (or a removable singularity) for $f$.
\end{definition}

The algebra of semiregular functions can be studied as follows.

\begin{theorem} \label{thm:semiregular}
Let $\OO$ be a a circular open set. The set $\mc{SEM}(\OO)$ of semiregular functions on $\OO$ is an alternative $^*$-algebra with respect to $+,\cdot,^c$.
\begin{itemize}
\item If $\OO$ is a slice domain, $\mc{SEM}(\OO)$ is a division algebra.
\item If $\OO$ is a product domain, then $\mc{SEM}(\OO)$ is a singular algebra, that is, it includes some element $f\not\equiv0$ with $N(f)\equiv0$. However, every element $f$ with $N(f)\not\equiv0$ admits a multiplicative inverse within the algebra.
\end{itemize}
\end{theorem}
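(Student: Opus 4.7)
The plan is to proceed in three stages: first verify closure of $\mc{SEM}(\OO)$ under $+,\cdot,{}^c$ and establish the alternative $^*$-algebra structure; second, prove the division-algebra property in the slice-domain case; finally, handle the product-domain case. Throughout, I will view a semiregular $f$ as a slice regular function on a circular open subdomain $\OO_f\subseteq\OO$ whose complement in $\OO$ consists solely of poles or removable singularities.

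First I would establish closure. Given $f,g\in\mc{SEM}(\OO)$ with regularity subdomains $\OO_f,\OO_g$, the operations $f+g$ and $f\cdot g$ are slice regular on $\OO_f\cap\OO_g$, and $f^c$ is slice regular on $\OO_f$; the alternative $^*$-algebra axioms then transfer immediately from $\mc{SR}$ via Proposition~\ref{prop:algebra}. What must still be checked is that every point of $\OO\setminus(\OO_f\cap\OO_g)$ is a pole or removable singularity of the combined function. By Theorem~\ref{thm:singularities}, applied at each singular sphere $\s_y$, I get nonnegative integers $k_f,k_g$ such that $\Delta_y^{k_f}\cdot f$ and $\Delta_y^{k_g}\cdot g$ extend slice regularly across $\s_y$. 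Since $\Delta_y$ is slice preserving, hence central in $\mc{S}$ by Proposition~\ref{prop:algebra}, and self-conjugate, suitable common powers of $\Delta_y$ simultaneously tame $f+g$, $f\cdot g$, and $f^c$ across $\s_y$, proving semiregularity of the combined function.

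Next, for the slice-domain case, I would take $f\in\mc{SEM}(\OO)\setminus\{0\}$ with regularity subdomain $\OO_f$. Since $\OO$ is a slice domain and the singular set $\OO\setminus\OO_f$ is a union of isolated points and isolated spheres (Theorem~\ref{thm:singularities}), $\OO_f$ remains a slice domain, and Proposition~\ref{SRnonsingular} forces $N(f)\not\equiv 0$. By Proposition~\ref{reciprocal}, $f^{-\punto}$ is slice regular on $\OO_f\setminus V(N(f))$, and by Theorem~\ref{structure_zeros}, $V(N(f))$ is itself a union of isolated points and isolated spheres. Around each zero sphere $\s_y$ of $f$, the Laurent expansion of Theorem~\ref{Laurent} yields a factorization $f=(x-y)^{\punto m}\cdot h$ with $h$ slice regular and nonzero on $\s_y$; by alternativity (via the $^*$-Artin theorem) $f^{-\punto}=h^{-\punto}\cdot (x-y)^{-\punto m}$, giving a pole of order at most $m$. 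Around each original pole sphere of $f$, the extension $g=\Delta_y^k\cdot f$ granted by Theorem~\ref{thm:singularities} has a zero of order less than $k$, so $f^{-\punto}=g^{-\punto}\cdot \Delta_y^k$ extends semiregularly with a removable singularity at $\s_y$. Hence $f^{-\punto}\in\mc{SEM}(\OO)$, and the identity $f\cdot f^{-\punto}=1$ propagates from the regular subdomain by analytic continuation.

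For the product-domain case, Example~\ref{1fin} provides an $f\in\mc{SR}(\oo\setminus\rr)\subseteq\mc{SEM}(\oo\setminus\rr)$ with $f\not\equiv 0$ yet $N(f)\equiv 0$, showing that $\mc{SEM}(\OO)$ is singular whenever $\OO$ contains an appropriate product subdomain; the construction extends directly to general product domains. On the other hand, when $N(f)\not\equiv 0$, Proposition~\ref{reciprocal} still produces $f^{-\punto}$ without requiring $\OO$ to be a slice domain, and the Laurent-based extension argument of the previous paragraph adapts verbatim. The hardest part will be the singularity bookkeeping: carefully transferring the Laurent data of $f$ near a zero or pole sphere to that of $f^{-\punto}$, while respecting the alternative (but non-associative) slice product. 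This hinges on the centrality and real-analyticity of $\Delta_y$ together with the $^*$-Artin theorem, both of which are genuinely needed in the octonionic setting.
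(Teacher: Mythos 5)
Your overall architecture is sound and genuinely different from the paper's proof: the paper disposes of the theorem in a few lines by citing the general results on semiregular functions from the companion paper on singularities (its theorem 11.3 for the algebra structure, corollary 11.7 for the Moufang-loop of nonzero tame elements, theorem 11.6 for inverses), the only division-algebra-specific input being Theorem~\ref{thm:tame}, which guarantees that \emph{every} slice function is tame. Your direct verification of closure under $+,\cdot,{}^c$ by taming with common powers of the central, self-conjugate polynomial $\Delta_y$ is correct, and your use of Example~\ref{1fin} and Proposition~\ref{SRnonsingular} for the product-domain case matches the paper.

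There is, however, a genuine gap in your treatment of the zero spheres of $f$ in the division-algebra step. The claimed factorization $f=(x-y)^{\punto m}\cdot h$ with $h$ slice regular and \emph{nonvanishing on all of} $\s_y$ is false in general, because of the paired-zero structure of Theorem~\ref{zerosonsphere}: for a nonreal $y$ and $f=\Delta_y=(x-y)\cdot(x-y^c)$ the only available factor $h=x-y^c$ still vanishes at $y^c\in\s_y$; worse, for $f=x-y^c$ one has $f(y)=y-y^c\neq 0$, so $a_0\neq0$ and no factor $(x-y)$ can be extracted at all, even though $N(f)=\Delta_y$ vanishes on $\s_y$ and $f^{-\punto}$ does have a pole there. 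Hence the step ``$f^{-\punto}=h^{-\punto}\cdot(x-y)^{-\punto m}$, giving a pole of order at most $m$'' does not go through as written. The repair is to argue on $N(f)$ instead: by Proposition~\ref{reciprocal}, $f^{-\punto}=N(f)^{-1}f^c$, and $N(f)$ is slice preserving with $N(f)\not\equiv0$, hence restricts to each slice as a holomorphic function with zeros of finite order; near a zero sphere $\s_y$ one writes $N(f)=\Delta_y^{k}u$ with $u$ slice preserving and nonvanishing, so $f^{-\punto}=\Delta_y^{-k}(u^{-1}f^c)$ has at worst a pole. A similar caveat applies at the original pole spheres of $f$: since $g=\Delta_y^k\cdot f$ may itself vanish on $\s_y$, the singularity of $f^{-\punto}$ there need not be removable, only a pole --- which is all you need. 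With these corrections your self-contained argument closes and reproves what the paper imports from the cited theorem 11.6 and corollary 11.7.
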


\begin{proof}
The first statement is derived from~\cite[theorem 11.3]{gpssingularities}. When $\OO$ is a slice domain,~\cite[corollary 11.7]{gpssingularities} tells us that the nonzero tame elements of the algebra form a multiplicative Moufang loop. But in our setting all slice functions $f$ are tame by theorem~\ref{thm:tame}, whence the second statement follows. Finally, if $\OO$ is a product domain then the algebra is singular by proposition~\ref{SRnonsingular} (see also example~\ref{1fin}). However, every $f$ with $N(f)\not\equiv0$ admits a multiplicative inverse (still semiregular in $\OO$) by~\cite[theorem 11.6]{gpssingularities}, if we take into account again theorem~\ref{thm:tame}.
\end{proof}

We are now ready to prove our last result: an analog of the Casorati-Weierstrass theorem for essential singularities.

\begin{theorem} \label{thm:casorati}
Let $\widetilde{\OO}$ be a circular open set. Let $y \in \widetilde{\OO}$ and set $\OO:=\widetilde{\OO}\setminus \s_y$. Suppose $y$ to be an essential singularity for $f\in\mc{SR}(\OO)$. Then, for each neighbourhood $U$ of $\s_y$ in $\widetilde{\OO}$, the image $f(U\setminus\s_y)$ is dense in $A$.
\end{theorem}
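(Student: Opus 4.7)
The plan is to argue by contradiction, adapting the classical complex-analytic proof via the reciprocal but using the slice machinery developed in Sections~\ref{sec:reciprocal} and~\ref{sec:singularities}. Suppose that for some neighbourhood $U$ of $\s_y$ in $\widetilde{\OO}$ the image $f(U\setminus \s_y)$ is not dense; choose $b\in A$ and $\varepsilon>0$ with $B(b,\varepsilon)\cap f(U\setminus\s_y)=\emptyset$. Set $h:=f-b\in\mc{SR}(\OO)$; the translation only shifts the $a_0$ coefficient in the Laurent expansion of theorem~\ref{Laurent}, so $y$ remains an essential singularity of $h$, and $|h(x)|\geq \varepsilon$ on $U\setminus \s_y$. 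I would shrink $U$ to a circular neighbourhood $U'$ of $\s_y$ inside $\widetilde{\OO}$, which is harmless for the density conclusion.

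The next step is to pass to the reciprocal of $h$ and extract a uniform bound. Because $h$ has no zeros in $U'\setminus\s_y$, theorem~\ref{zerosonsphere} together with the circularity of $U'$ yields $V(N(h))\cap (U'\setminus\s_y)=\emptyset$; hence proposition~\ref{reciprocal} gives $h^{-\punto}\in\mc{SR}(U'\setminus\s_y)$. The crucial ingredient is corollary~\ref{cor:image}, applied to the circular subset $U'\setminus\s_y$, which yields
\[\{h^{-\punto}(x) : x\in U'\setminus\s_y\}=\{h(x)^{-1} : x\in U'\setminus\s_y\},\]
and therefore $|h^{-\punto}|\leq \varepsilon^{-1}$ throughout $U'\setminus\s_y$. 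I expect this to be the genuinely delicate step in the octonionic setting, because the pointwise identity $h^{-\punto}(x)=h(x)^{-1}$ generally fails (see theorem~\ref{thm:reciprocalformula} and the subsequent examples); the global bound survives only thanks to the circular image formula, not via any naive pointwise manipulation.

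Finally, convert boundedness of $h^{-\punto}$ into removability and return to $h$. Fix $J\in\s$ with $y\in\cc_J$: by lemma~\ref{splitting} the splitting-basis components of $h^{-\punto}$ restricted to $\cc_J\cap (U'\setminus\s_y)$ are holomorphic and bounded near $y$ and $y^c$, so the boundedness criterion at the end of theorem~\ref{thm:singularities} (with $k=0$) forces every point of $\s_y$ to be a removable singularity for $h^{-\punto}$. Let $\tilde g\in\mc{SR}(U')$ be the resulting extension. From $\tilde g\cdot h=1$ on $U'\setminus\s_y$ and the multiplicativity in theorem~\ref{thm:tame} one obtains $N(\tilde g)\,N(h)=1$ there, so $N(\tilde g)\not\equiv 0$ on $U'$. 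Theorem~\ref{thm:semiregular} then produces $\tilde g^{-\punto}\in\mc{SEM}(U')$, which coincides with $h$ on $U'\setminus\s_y\setminus V(N(\tilde g))$ and, being semiregular, has at worst poles on $\s_y$. This contradicts the essential character of $y$ for $h$, hence for $f$, completing the proof.
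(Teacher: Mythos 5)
Your proof is correct and follows essentially the same route as the paper's: argue by contradiction, pass to the reciprocal of the shifted function, use Corollary~\ref{cor:image} on a circular neighbourhood to obtain the uniform bound (precisely the delicate point you flag), deduce removability from Theorem~\ref{thm:singularities}, and invert back within the semiregular algebra. The only difference is that you spell out the final inversion step in more detail, whereas the paper simply invokes Theorem~\ref{thm:semiregular} twice.
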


\begin{proof}
We will prove that, if there exist a neighbourhood $U$ of $\s_y$ and a Euclidean ball $B(v,r)$ with $r>0$ such that
\[f(U\setminus\s_y)\cap B(v,r)=\emptyset\,,\]
then $y$ is not an essential singularity for $f$. We assume, without loss of generality, $U$ to be circular.

The previous equality implies that the function $g:=f-v$ maps $U\setminus\s_y$ into the complement of $B(0,r)$.
Now consider $g^{-\punto}$, which is semiregular in $\OO$. By corollary~\ref{cor:image},
\[g^{-\punto}(U\setminus\s_y) = \{g(x)^{-1}\ |\ x \in U\setminus\s_y\}\subseteq \overline{B(0,1/r)}\,,\]
whence $g^{-\punto}$ is bounded near $\s_y$. By theorem~\ref{thm:singularities}, $g^{-\punto}$ extends to a regular function on $\widetilde{\OO}$. Using theorem~\ref{thm:semiregular} twice, we conclude that $g$ and $f$ are semiregular in $\widetilde{\OO}$, so that $y$ cannot be an essential singularity for $f$.
\end{proof}

\newpage

\section*{Acknowledgements}
This work was partly supported by GNSAGA of INdAM, by the grant FIRB 2012 ``Differential Geometry and Geometric Function Theory" of the Italian Ministry of Education (MIUR) and by the INdAM project ``Hypercomplex function theory and applications''. The third author is also supported by Finanziamento Premiale FOE 2014 ``Splines for accUrate NumeRics: adaptIve models for Simulation Environments'' of MIUR.

We warmly thank the anonymous referee, whose helpful suggestions have significantly improved our presentation.

%%%%%%%%%%%%%%%%%%%%%%%%%%%%%
%% BIBLIOGRAPHY

%%%%%%%%%%%%%%%%%%%%%%%%%%%%%

\end{document}